\documentclass{l4dc2026}

\title[]{Online Tracking with Predictions for Nonlinear
Systems with Koopman Linear Embedding}
\usepackage{times}

\author{\Name{Chih-Fan Pai} \Email{cpai@ucsd.edu}\\
 \Name{Xu Shang} \Email{x3shang@ucsd.edu}\\
 \Name{Jiachen Qian} \Email{jiq012@ucsd.edu}\\
 \Name{Yang Zheng} \Email{zhengy@ucsd.edu}\\
 \addr Department of Electrical and Computer Engineering, University of California San Diego}

\newtheorem{thm}{Theorem}[section]
\newtheorem{lem}{Lemma}[section]
\newtheorem{prop}{Proposition}[section]
\newtheorem{cor}{Corollary}[section]
\newtheorem{exmp}{Example}[section]
\newtheorem{defn}{Definition}[section]

\newtheorem{assum}{Assumption}[section]

\usepackage{enumitem}
\usepackage{float}
\usepackage{wrapfig}

\usepackage{times}
\usepackage{optidef}
\usepackage{verbatim}
\usepackage{commath}

\newcommand{\tr}{\mathsf{ T}}

\DeclareMathOperator*{\argmin}{\arg\!\min}

\usepackage{comment}
\usepackage{microtype}
\usepackage{graphicx}
\usepackage{subcaption,setspace}
\usepackage{booktabs} 

\usepackage{hyperref}

\usepackage[capitalize,noabbrev]{cleveref}

\usepackage{caption}

\newcommand{\ini}{\textnormal{ini}}
\newcommand{\f}{\textnormal{F}}

\newcommand{\D}{\textnormal{d}}

\usepackage{algorithm}
\usepackage{algorithmic}

\makeatletter
\g@addto@macro\normalsize{%
  \setlength\abovedisplayskip{5pt}%
  \setlength\belowdisplayskip{5pt}%
  \setlength\abovedisplayshortskip{5pt}%
  \setlength\belowdisplayshortskip{5pt}%
}
\makeatother

\begin{document}

\maketitle

\vspace{-5mm}

\begin{abstract}%
We study the problem of online tracking in unknown nonlinear dynamical systems, where only short-horizon predictions of future target states are available. This setting arises in practical scenarios where full future information and exact system dynamics are unavailable. We focus on a class of nonlinear systems that admit a Koopman linear embedding, enabling the dynamics to evolve linearly in a lifted space. Exploiting this structure, we analyze a model-free predictive tracking algorithm based on Willems' fundamental lemma, which imposes dynamic constraints using only past data within a receding-horizon control framework. We show that, for Koopman-linearizable systems, the cumulative cost and dynamic regret of the nonlinear tracking problem coincide with those of the lifted linear counterpart. Moreover, we prove that the dynamic regret of our algorithm decays exponentially with the prediction horizon, as validated by numerical experiments.
\end{abstract}

\begin{keywords}
  Online control, data-driven predictive control, dynamic regret, Koopman lifting
\end{keywords}

\section{Introduction}

We study the online tracking problem in unknown nonlinear dynamical systems, where an agent aims to follow a time-varying target trajectory using only short-horizon predictions of future target states. This sequential decision-making problem arises in many real-world applications, such as robotics \citep{falcone2007predictive}, autonomous systems \citep{rios2016survey,zheng2016distributed}, and adaptive control \citep{krstic1995nonlinear,ioannou1996robust,annaswamy2021historical}, where complete knowledge of future targets or system dynamics is rarely available. At each time step, the agent observes the current system state, receives short-horizon predictions of future target states, and selects a control action to steer the system toward the evolving target. The goal is to minimize the cumulative tracking error as well as its control efforts over a finite horizon.

While online control or tracking of linear dynamical systems has received significant attention \citep{abbasi2014tracking,li2019online,yu2020power,karapetyan2023online,tsiamis2024predictive,
agarwal2019online,foster2020logarithmic,zhang2022adversarial,hazan2022introduction}, its extension to nonlinear systems, especially in the presence of predictions, remains largely unexplored. In this work, we focus on a special class of nonlinear systems that admit a \emph{Koopman linear embedding}: a lifting procedure into a higher-dimensional space in which the system evolution becomes linear \citep{brunton2016koopman,shang2024willems}. This structure enables us to reformulate the original nonlinear problem as a linear one in the lifted space, allowing us to exploit tools from linear control. 
However, in practice, the exact Koopman embedding is often unknown. To address this, we study a model-free predictive tracking algorithm inspired by Willems' fundamental lemma \citep{williams2015data}. While the method follows the general template of Model Predictive Control (MPC) \citep{garcia1989model}, a widely used approach for online control with predictions, it departs from classical MPC by imposing nonlinear dynamics constraints purely from offline trajectory data \citep{shang2024willems}, without requiring explicit system identification.

The asymptotic properties of MPC have been extensively studied under general assumptions on dynamics and cost functions \citep{diehl2010lyapunov,grune2020economic,6099558}. However, non-asymptotic performance metrics remain less well understood. \textit{Dynamic regret}, a standard measure in online learning \citep{jadbabaie2015online} and control \citep{yu2020power,goel2020regret,zhang2021regret,lin2022bounded}, quantifies the performance gap between an online controller and the optimal control sequence in hindsight. This metric is particularly suited for nonstationary environments, such as tracking moving or adversarial targets \citep{abbasi2014tracking,dressel2019hunting}. Recent work has established dynamic regret guarantees for linear MPC, demonstrating near-optimal performance in both stochastic and adversarial disturbance settings \citep{yu2020power,zhang2021regret,lin2021perturbation}. However, these results rely on known linear dynamics, limiting their applicability to nonlinear or model-free settings. Extending dynamic regret guarantees to such systems remains a significant challenge due to the complexity of nonlinear dynamics and the absence of explicit models.
To our knowledge, this work is the first to provide \textit{dynamic regret} guarantees for online tracking in unknown Koopman-linearizable nonlinear systems.

\textbf{Our contributions.} More specifically, for systems that admit exact Koopman embeddings, we establish an equivalence between the nonlinear tracking problem and its linear counterpart in the Koopman space.
    In the offline setting with full target information, we show that the optimal tracking policy can be characterized by solving the lifted linear problem (Lemma \ref{lem:cost-eq}).
    Moreover, for any online controller, its dynamic regret can be analyzed by the lifted linear counterpart (Corollary \ref{coro:reg-eq}).
    Building on this insight, we study a model-free predictive tracking method (Algorithm \ref{alg:MPC}) based on the extended Willems's fundamental lemma \citep{shang2024willems}, which enforces the nonlinear dynamics constraint using offline trajectory data and computes control actions by solving a data-driven quadratic program at each time step. 
    In particular, we establish the first dynamic regret bound, showing that the regret grows linearly with full horizon and decays exponentially with the prediction horizon (Theorem \ref{thm:regret-MPC}).
    Our design and analysis depart from standard approaches in two key aspects.
    First, stability (state boundedness) is ensured by a sufficiently long prediction horizon rather than a carefully designed terminal cost. Second, unlike most regret analyses that rely critically on positive definite stage costs, a condition violated by the Koopman structure, our results hold under the weaker assumptions of positive semidefinite stage costs and detectability.

\textbf{Related works.} Our work intersects with two key areas: Koopman operator theory and online~(nonstochastic) control. We here review some closely related results.

\vspace{-5mm}

\begin{itemize}
    \item[(i)] \textbf{Koopman operator theory}, which enables representing nonlinear dynamical systems as linear systems in a high-dimensional space via lifting functions \citep{brunton2016koopman}. Standard approaches often approximate the infinite-dimensional Koopman operator using Extended Dynamic Mode Decomposition \citep{williams2015data,proctor2018generalizing}, which requires a careful selection of lifting functions to avoid modeling bias \citep{korda2018linear,berberich2020trajectory}. Recent advances learn embeddings \citep{shi2022deep} or bypass explicit lifting \citep{shang2024willems}, but these methods typically assume known or partially identified dynamics. In contrast, our work assumes exact Koopman embeddings and employs a model-free predictive control algorithm inspired by a recent Willems’ fundamental lemma \citep{shang2024willems}. By imposing dynamics constraints through offline trajectory data, we eliminate the need for system identification or lifting approximation, enabling robust tracking of unknown nonlinear systems with theoretical guarantees (Theorem \ref{thm:regret-MPC}). 
    \vspace{-2mm}
    \item[(ii)] \textbf{Online nonstochastic control,} which adopts a learning-theoretic perspective under nonstochastic or adversarial environments. This class of approaches typically reformulates the decision-making problem into online convex optimization, intending to minimize \emph{policy regret}, which compares the performance of the online policy against the best fixed policy over a parameterized class \citep{agarwal2019online,simchowitz2020improper,hazan2022introduction}. While this framework allows for tractable algorithm design, it is less suited for nonstationary tasks such as target tracking with evolving references. Instead, we focus on \emph{dynamic regret}, which measures the performance gap relative to the best sequence of control actions in hindsight. This stronger benchmark better captures the challenges of online tracking under predictions. Moreover, while most existing results apply to linear systems with known dynamics, our work provides dynamic regret guarantees in a nonlinear, model-free setup.
\end{itemize}

\vspace{-2mm}
The rest of the paper is organized as follows. Section \ref{sec:problem-setup} presents the online tracking problem for nonlinear systems. Section \ref{sec:nonlinear-to-linear} establishes the equivalence between the original nonlinear problem and its lifted linear counterpart. Section \ref{sec:predictive-tracking-alg} presents a unified predictive tracking algorithm. Section \ref{sec:regret-analysis} analyzes the dynamic regret. Section \ref{sec:experiment} provides numerical experiments, and Section \ref{sec:conclusion} concludes the paper. Technical proofs and experimental details are given in the appendix. 

\textbf{Notation.}
For a matrix $M$, let $\|M\|$ denote its induced $\ell_2$-norm, and let $\rho(M)$ denote its spectral radius, defined as the maximum absolute value of its eigenvalues. For a vector $x$, $\|x\|$ denotes the standard Euclidean norm, and $\|x\|_P := (x^\tr P x)^{1/2}$ denotes the $P$-weighted norm for any $P \succeq 0$. We write $x_{1:t} = \{x_1, \dotsc, x_t\}$ for a sequence of vectors and also use it to denote $[x_1^\tr, \ldots, x_t^\tr]^\tr$ for notation simplicity. We utilize $[t] = \{1, \dotsc, t\}$ for the index set.

\vspace{-2mm}
\section{Problem Setup} \label{sec:problem-setup}

\vspace{-1mm}

We study an online control problem where the system initialized at $z_1$ evolves according to some unknown nonlinear dynamics $z_{t+1} = f(z_t, u_t)$. Here $z_t \in \mathbb{R}^{n_z}$ is the state and $u_t \in \mathbb{R}^{n_u}$ is the control action at time $t$. The agent aims to track a target trajectory $r_{1:T}$ over a finite horizon of length $T$. 
In the classical setting, the entire trajectory is known a priori \citep{anderson2007optimal,bertsekas2012dynamic}. However, in many applications the target may be unknown or even adversarial.
We therefore consider the problem of \textit{online tracking with predictions}, where at each time step $t\in[T]$, the agent interacts with the adversary (environment) as follows:
\vspace{-2mm}
\begin{itemize} \setlength{\itemsep}{1pt}
    \item the agent observes $z_t$ and predictions $r_{t:t+W-1}$ over a prediction horizon $W$;
    \vspace{-2mm}
    \item the agent selects and applies a control action $u_t$;
    \vspace{-2mm}
    \item the adversary selects $r_{t+W}$, and the agent incurs the stage cost $\|z_t - r_t\|_{Q_z}^2 + \|u_t\|_{R}^2$;
    \vspace{-2mm}
    \item the system state evolves according to $z_{t+1} = f(z_t, u_t)$.
\end{itemize}

\vspace{-1mm}
Let $\pi \!=\! \{\pi_t\}_{t=1}^{T}$ denote the online (causal) control policy followed by the agent. Based on the above interaction protocol, $\pi_t$ can depend only on the available information up to that point, i.e., $u_t \!=\! \pi_t(z_{1:t}, r_{1:t+W\!-\!1})$. 
Let $\mathbf{u}=u_{1:T}$ and $\mathbf{r}=r_{1:T}$. We define the cumulative cost incurred by $\pi$ as
\vspace{-2mm}
\begin{equation}
    J_T(\mathbf{u};\mathbf{r}) = \sum_{t=1}^T\ell(z_t,u_t;r_t), \quad \ell(z_t,u_t;r_t):=\|z_t - r_t\|_{Q_z}^2 + \|u_t\|_R^2,
    \vspace{-1mm}
\end{equation}
where $Q_z \! \succeq \!0$ and $R \! \succ \! 0$. 
For notation simplicity, we omit the dependence of $J_T$ on $z_1$.

\noindent \textbf{Dynamic regret.} We measure the performance of the agent's policy $\pi$ via the notion of dynamic regret, which compares the cumulative cost of $\pi$ to that of the optimal noncausal policy $\pi^*$ in hindsight (with full knowledge of the target trajectory $\mathbf{r}$ and system dynamics $f$):
\begin{equation} \label{eq:dynamic-reg}
    \mathrm{Reg}_T(\pi) = J_T(\mathbf{u};\mathbf{r}) - J_T^*,
\end{equation}
where $J_T^* := \min_{\mathbf{u}} \left\{ J_T(\mathbf{u}; \mathbf{r})  \mid  z_{t+1} = f(z_t, u_t) \right\}$.  Dynamic regret reflects how well the online controller adapts to a nonstationary, possibly adversarial environment \citep{goel2020regret}. This contrasts with policy regret \citep{agarwal2019online}, which compares against the best fixed policy from a parameterized class. We make no assumptions on the target trajectory $\mathbf{r}$ beyond boundedness; without loss of generality, we assume $\|r_t\|\leq 1$ for all $t\in[T]$.
In this paper, we focus on a special class of nonlinear systems \citep{shang2024willems, brunton2016koopman}.

\vspace{1mm}
\noindent \textbf{Koopman-linearizable dynamics.}
A (nonlinear) system $z_{t+1} = f(z_t, u_t)$ is said to be \emph{Koopman-linearizable} if there exist a lifting function $\psi:\mathbb{R}^{n_z} \to \mathbb{R}^{n_x}$ and matrices $A \!\in\! \mathbb{R}^{n_x \times n_x}$, $B \!\in\! \mathbb{R}^{n_x \times n_u}$, and $C \!\in\! \mathbb{R}^{n_z \times n_x}$ such that, for all $z_t \!\in\! \mathbb{R}^{n_z}$, the lifted state $x_t = \psi(z_t)$ evolves linearly $x_{t+1} = A x_t + B u_t$ and the original state satisfies $z_t = C x_t = C\psi(z_t)$.
We refer to the tuple $(A, B, C, \psi)$ as the Koopman embedding.
This class of nonlinear dynamics strictly goes beyond standard linear systems and can be used as an alternative to classical local linearization techniques. 
\vspace{-2mm}
\begin{assum}
\label{assump:koopman}
The nonlinear system $f$ is Koopman-linearizable. In addition, there exists a uniform bound $D_\psi< \infty$ such that $\|\psi(r)\| \leq D_\psi$ for all $\|r\|\leq1$.
\end{assum}

\vspace{-3mm}
\begin{wrapfigure}[5]{r}{.47\textwidth}
\vspace{-8mm}
\begin{equation}
\label{eq:slow-mani}
\begingroup
    \setlength\arraycolsep{3pt}
\def\arraystretch{0.85} 
\begin{bmatrix}
    z_{1}^{+} \\
    z_{2}^{+}
\end{bmatrix}
\!=\!
\begin{bmatrix}
    0.99 z_1 \\
    z_2 + z_1^2
\end{bmatrix}
\!+\!
\begin{bmatrix}
    0 \\
    1
\end{bmatrix} u.
\endgroup
\end{equation}
\begin{equation}
\label{eq:slow-mani-Koop}
\begingroup
    \setlength\arraycolsep{3pt}
\def\arraystretch{0.85} 
    \begin{bmatrix}
    x_1^{+} \\
    x_2^{+} \\
    x_3^{+}
\end{bmatrix}\!=\!
\begin{bmatrix}
    0.99 & 0 & 0 \\
    0 & 1 & 1 \\
    0 & 0 & 0.99^2
\end{bmatrix}
\begin{bmatrix}
    x_1 \\
    x_2 \\
    x_3
\end{bmatrix}
\!+\!
\begin{bmatrix}
    0 \\
    1 \\
    0
\end{bmatrix} u.
\endgroup
\end{equation}
\end{wrapfigure} 
\begin{exmp}[\citep{brunton2016koopman}] \label{exmp:slow-manifold}
Consider the nonlinear system \eqref{eq:slow-mani}.
With the lifted coordinates \( x = \psi(z) := [z_1,\ z_2,\ z_1^2]^\tr \), it admits an exact~linear embedding \eqref{eq:slow-mani-Koop}. \hfill $\square$
\end{exmp}
\vspace{-2mm}

The lifting function $\psi$ is generally unknown even if it exists. In this work, we develop a model-free predictive controller exploiting the Koopman structure without requiring explicit knowledge of $f$ or $\psi$, and provides a dynamic regret guarantee.

\section{From Nonlinear to Linear via Koopman Embedding} \label{sec:nonlinear-to-linear}
\vspace{-1.5mm}
We establish a formal equivalence between the original nonlinear tracking problem and its lifted linear counterpart, then use linear control theory to characterize the optimal offline noncausal policy.

\vspace{-2mm}
\subsection{Equivalence in the lifted space}
\vspace{-1mm}
Consider the nonlinear dynamics $f$ initialized at $z_1$ with Koopman embedding $(A,B,C,\psi)$. Let $x_1=\psi(z_1)$.
Define $Q:=C^\tr Q_z C$ and the lifted cumulative cost
\vspace{-1mm}
\[
    {J}_T^{\mathrm{lft}}(\mathbf{u};\mathbf{r}):= \sum_{t=1}^T \ell_{\mathrm{lft}}(x_t,u_t;r_t), \quad \ell_{\mathrm{lft}}(x_t,u_t;r_t):=\|x_t - \psi(r_t)\|_{Q}^2 + \|u_t\|_R^2.
\]
\vspace{-1mm}
We now present a lemma capturing the cost equivalence under Koopman lifting.
\begin{lem} \label{lem:cost-eq}
    Let $x_1=\psi(z_1)$. The following statements hold:
    \vspace{-2mm}
    \begin{enumerate}
        \item[\textnormal{(i)}] ${J}_T(\mathbf{u};\mathbf{r})={J}_T^{\mathrm{lft}}(\mathbf{u}; \mathbf{r})$ for any control sequence $\mathbf{u}$;
        \item[\textnormal{(ii)}] $J_T^*=({J}_T^{\mathrm{lft}})^*$, where $({J}_T^{\mathrm{lft}})^*:=\min_{\mathbf{u}} \ \{{J}_T^{\mathrm{lft}}(\mathbf{u};\mathbf{r}):x_{t+1}=Ax_t+Bu_t \}$.
    \end{enumerate}
\end{lem}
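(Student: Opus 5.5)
The plan is to first show that, for a fixed control sequence, the nonlinear trajectory and the lifted linear trajectory are tied together through $\psi$ and $C$. Then I will compare the stage costs term by term, and finally pass to the minimum.

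Fix an arbitrary $\mathbf{u}=u_{1:T}$. Let $z_{1:T}$ be the trajectory generated by $z_{t+1}=f(z_t,u_t)$ from $z_1$, and let $x_{1:T}$ be generated by $x_{t+1}=Ax_t+Bu_t$ from $x_1=\psi(z_1)$. I will prove by induction on $t$ that $x_t=\psi(z_t)$ for all $t\in[T]$. The base case holds by definition. For the inductive step, Koopman-linearizability (Assumption~\ref{assump:koopman}) gives $\psi(f(z_t,u_t))=A\psi(z_t)+Bu_t$ for every $z_t$ and $u_t$. Hence
\[
\psi(z_{t+1})=A\psi(z_t)+Bu_t=Ax_t+Bu_t=x_{t+1}.
\]
Consequently $z_t=C\psi(z_t)=Cx_t$ for all $t$.

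Next I compare the stage costs. The targets $r_t$ are themselves points of $\mathbb{R}^{n_z}$, so the identity $r=C\psi(r)$ from the definition of the embedding applies to them. Therefore
\[
z_t-r_t=C\bigl(x_t-\psi(r_t)\bigr),
\]
and so
\[
\|z_t-r_t\|_{Q_z}^2=\bigl(x_t-\psi(r_t)\bigr)^\tr C^\tr Q_z C\bigl(x_t-\psi(r_t)\bigr)=\|x_t-\psi(r_t)\|_Q^2.
\]
The input terms are identical on both sides. Summing over $t\in[T]$ gives (i). The one point needing care in this step is that the identity $r=C\psi(r)$ is assumed for all of $\mathbb{R}^{n_z}$, not only along trajectories. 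I would state this explicitly, since it is exactly what makes the tracking error expressible in the lifted coordinates.

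For (ii), both problems minimize over the same decision variable $\mathbf{u}\in\mathbb{R}^{Tn_u}$. In each problem the dynamics constraint merely determines the state trajectory from $\mathbf{u}$ and the initial condition; it does not restrict $\mathbf{u}$. By (i), the two objectives therefore coincide as functions of $\mathbf{u}$, so their infima are equal. Existence of the minimizer follows from the lifted problem, which is a strictly convex quadratic in $\mathbf{u}$ because $R\succ0$. Hence the minimum $J_T^*$ is attained and equals $(J_T^{\mathrm{lft}})^*$, and the two problems share the same optimal $\mathbf{u}$.
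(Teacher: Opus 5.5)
Your proposal is correct and follows the same route as the paper: you use $z_t = Cx_t$ and $r_t = C\psi(r_t)$ to rewrite each stage cost as $\|x_t-\psi(r_t)\|_Q^2$, then conclude (ii) because the two objectives agree for every $\mathbf{u}$. Your explicit induction showing $x_t=\psi(z_t)$ along the trajectory fills in a detail the paper leaves implicit, and so does your remark that the lifted problem is strictly convex in $\mathbf{u}$, which guarantees the minimum is attained.
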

\vspace{-1mm}

\begin{proof}
    The proof is straightforward. Since $z_t \!=\! C x_t$ and $r_t \!=\! C \psi(r_t)$, the stage cost satisfies $\|z_t - r_t\|_{Q_z}^2 \!=\! \|C x_t - C \psi(r_t)\|_{Q_z}^2 \!=\! \|x_t - \psi(r_t)\|_{Q}^2$, and thus (i) follows.
    For (ii), because $J_T$ and $J_T^{\mathrm{lft}}$ agree for all control sequences $\mathbf{u}$, their optimal values are identical.
\end{proof}

We next define the lifted dynamic regret 
$
    {\mathrm{Reg}}_T^{\mathrm{lft}}(\pi) :={J}_T^{\mathrm{lft}}(\mathbf{u};\mathbf{r})-({J}_T^{\mathrm{lft}})^*.
$
By Lemma \ref{lem:cost-eq}, the following result follows immediately.
\begin{cor}[Equivalence of dynamic regrets] \label{coro:reg-eq}
For any online policy $\pi$, $\mathrm{Reg}_T(\pi)=\mathrm{Reg}_T^{\mathrm{lft}}(\pi)$.
\end{cor}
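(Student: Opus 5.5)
The plan is to combine the two parts of Lemma~\ref{lem:cost-eq} directly. Fix any online policy $\pi$ and a target sequence $\mathbf{r}$. Running $\pi$ on the true nonlinear system from $z_1$ generates a control sequence $\mathbf{u}=u_{1:T}$ and states $z_{1:T}$. We first check that the same $\mathbf{u}$, applied to the lifted system $x_{t+1}=Ax_t+Bu_t$ from $x_1=\psi(z_1)$, gives $x_t=\psi(z_t)$ for every $t$. This follows by induction on $t$ from the Koopman-linearizable definition: if $x_t=\psi(z_t)$, then
\[
\psi(z_{t+1})=\psi(f(z_t,u_t))=A\psi(z_t)+Bu_t=Ax_t+Bu_t=x_{t+1}.
\]
Hence the lifted trajectory is the one that Lemma~\ref{lem:cost-eq}(i) refers to, and $J_T^{\mathrm{lft}}(\mathbf{u};\mathbf{r})$ is the lifted cost incurred by $\pi$. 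Because $z_t=Cx_t$, the policy sees the same information in both descriptions. So regardless of whether $\pi$ is regarded as acting on $z_t$ or on $x_t$, it produces the same realized $\mathbf{u}$.

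Next, we apply Lemma~\ref{lem:cost-eq}(i) to this realized sequence to get $J_T(\mathbf{u};\mathbf{r})=J_T^{\mathrm{lft}}(\mathbf{u};\mathbf{r})$. Lemma~\ref{lem:cost-eq}(ii) gives $J_T^*=(J_T^{\mathrm{lft}})^*$; the comparator is noncausal and does not depend on $\pi$. Subtracting the two identities gives
\[
\mathrm{Reg}_T(\pi)=J_T(\mathbf{u};\mathbf{r})-J_T^*=J_T^{\mathrm{lft}}(\mathbf{u};\mathbf{r})-(J_T^{\mathrm{lft}})^*=\mathrm{Reg}_T^{\mathrm{lft}}(\pi).
\]

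The only point needing care is the first step: confirming that the online policy's realized control sequence is well defined and identical in both settings. Since $\pi$ is causal and its inputs $(z_{1:t},r_{1:t+W-1})$ are determined by $\mathbf{u}$ and $\mathbf{r}$ through the true dynamics, the induction above settles this. Everything else is a direct substitution into Lemma~\ref{lem:cost-eq}.
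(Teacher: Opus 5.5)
Your proposal is correct and matches the paper's argument: the paper derives the corollary by applying Lemma~\ref{lem:cost-eq}(i) to the control sequence realized by $\pi$ and Lemma~\ref{lem:cost-eq}(ii) to the comparator, then subtracting. Your induction showing $x_t=\psi(z_t)$ along the closed-loop trajectory makes explicit a step that the paper uses without comment in the proof of Lemma~\ref{lem:cost-eq}.
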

\vspace{-1mm}
This result allows us to analyze the dynamic regret \eqref{eq:dynamic-reg} in the linear Koopman space, while drawing conclusions for the original nonlinear system. 
We impose the following assumption.
\begin{assum} \label{assump:LQ}
    $(A, B)$ is stabilizable, $(Q,A)$ is detectable, $Q_z\succeq0$, $Q\succeq0$, and $R\succ0$.
\end{assum}

The assumption ${Q} \succeq 0$ is nontrivial. Since the recovery matrix $C$ is not full column rank, the lifted cost matrix ${Q} = C^\tr Q_z C$ cannot be positive definite, even if $Q_z \succ 0$. 

\subsection{Optimal offline policy}
\vspace{-1mm}
We now analyze the optimal offline (noncausal) tracking policy by exploiting an exact Koopman embedding that transforms the nonlinear tracking problem into an equivalent linear-quadratic tracking problem in the lifted space. The two formulations are presented below:
\begin{align}
    J_T^* &:= \min_{\mathbf{u}} \textstyle\sum_{t=1}^T \ell(z_t,u_t;r_t) \qquad \mathrm{s.t.} \ z_{t+1} = f(z_t, u_t), \ z_1 \text{ given}; \tag{N} \label{eq:nonlinear-tracking} \\
    (J_T^{\mathrm{lft}})^* &:= \min_{\mathbf{u}} \textstyle\sum_{t=1}^T \ell_{\mathrm{lft}}(x_t,u_t;r_t) 
       \qquad \mathrm{s.t.} \ x_{t+1} = A x_t + B u_t, \ x_1 = \psi(z_1). \tag{L}  \label{eq:LQT} 
\end{align}
To solve the linear lifted problem \eqref{eq:LQT}, we first introduce the standard Riccati recursion.
\begin{defn}[Riccati recursion] \label{def:Riccati-recursion-LQT}
Let $P_{T} = Q$ and define the backward Riccati recursion:
\begin{align*}
    &P_t = Q + A^\tr P_{t+1} A - A^\tr P_{t+1} B \Sigma_t^{-1} B^\tr P_{t+1} A, \quad K_t= \Sigma_t^{-1} B^\tr P_{t+1} A
\end{align*}
\vspace{-3mm}\\ 
for all $t\in[T-1]$ where we denote $\Sigma_t = R + B^\tr P_{t+1} B$.
\end{defn}
\vspace{-2mm}

We also introduce the following notation for brevity.
Let the closed-loop matrix at time $t\in[T]$ be $A_{\mathrm{cl},t} := A - B {K}_t$, and define the state transition matrix for any $t_1 < t_2 < T$ as $A_{\mathrm{cl},t_1 \to t_2} := A_{\mathrm{cl},t_2} A_{\mathrm{cl},t_2-1} \cdots  A_{\mathrm{cl},t_1+1}$ with the convention $A_{\mathrm{cl},t_1 \to t_1} := I$.

Now we characterize the optimal policy for \eqref{eq:LQT}, following \citep{foster2020logarithmic,zhang2021regret,goel2022power}.
A self-contained derivation is given in Appendix~\ref{app:proof-opt-offline-policy}.

\vspace{-2mm}
\begin{thm} \label{thm:opt-offline-tracking}
    Let $\pi_t^*(x_t;\mathbf r)$ denote the optimal offline policy for \eqref{eq:LQT} with optimal control action $u_t^*$, where $x_t\!=\!\psi(z_t)$. 
    The policy admits the following closed-form expression for all $t\in[T]$:
    \vspace{-1mm}
    \begin{equation} \label{eq:opt-LQT}
        u_t^* = \pi_t^*(x_t;\mathbf{r}) = -K_t (x_t - \psi(r_t)) - \textstyle\sum_{i=t}^{T-1} K_{t\to i} \left( A \psi(r_i) - \psi(r_{i+1}) \right),
    \end{equation}
    where $K_t$ denotes the feedback gain and ${K_{t\to i}} \!:=\! \Sigma_t^{-1} B^\tr A_{\mathrm{cl}, t \to i}^\tr P_{i+1}, t\!\leq\! i\!<\!T$, denote the feedforward gains, both computed via the Riccati recursion in Definition~\ref{def:Riccati-recursion-LQT}.
\end{thm}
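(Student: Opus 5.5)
We will prove the result by backward dynamic programming on the lifted linear problem \eqref{eq:LQT}. It is convenient to write $s_t := \psi(r_t)$ and to work with the error coordinate $e_t := x_t - s_t$. In this coordinate the dynamics become
\[
e_{t+1} = A e_t + B u_t + d_t, \qquad d_t := A s_t - s_{t+1},
\]
and the stage cost is $\|e_t\|_Q^2 + \|u_t\|_R^2$. This is a standard linear-quadratic problem with a known (noncausal) additive disturbance sequence $d_{1:T-1}$.

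\textbf{Value-function ansatz.} We claim that the optimal cost-to-go at time $t$ is
\[
V_t(e) = \|e\|_{P_t}^2 + 2\,e^\tr q_t + c_t ,
\]
where $P_t$ is given by Definition~\ref{def:Riccati-recursion-LQT} and $q_t$ is a linear function of $d_{t:T-1}$. At the terminal step we take $V_T(e) = \|e\|_Q^2$, so $P_T = Q$ and $q_T = 0$. For the inductive step, assume the form holds at $t+1$ and minimize
\[
\|e\|_Q^2 + \|u\|_R^2 + V_{t+1}(Ae + Bu + d_t)
\]
over $u$. The objective is strictly convex in $u$ because $\Sigma_t = R + B^\tr P_{t+1} B \succ 0$; this follows from $R \succ 0$ and $P_{t+1} \succeq 0$, the latter holding inductively since each $V_t$ is a minimum of nonnegative costs. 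Setting the gradient to zero gives
\[
u_t = -K_t e - \Sigma_t^{-1} B^\tr\bigl(P_{t+1} d_t + q_{t+1}\bigr).
\]
Substituting this back, the quadratic term reproduces the Riccati recursion for $P_t$. Collecting the linear terms gives
\[
q_t = A_{\mathrm{cl},t}^\tr\bigl(P_{t+1} d_t + q_{t+1}\bigr).
\]
The constant $c_t$ is irrelevant to the policy and can be ignored.

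\textbf{Unrolling the recursion.} Starting from $q_T = 0$, we show by backward induction that
\[
q_{t+1} = \sum_{i=t+1}^{T-1} A_{\mathrm{cl},t\to i}^\tr\, P_{i+1}\, d_i .
\]
The induction step uses the composition rule $A_{\mathrm{cl},t+1\to i}\,A_{\mathrm{cl},t+1} = A_{\mathrm{cl},t\to i}$, with the stated ordering convention $A_{\mathrm{cl},t_1\to t_2} = A_{\mathrm{cl},t_2}\cdots A_{\mathrm{cl},t_1+1}$. Consequently
\[
P_{t+1} d_t + q_{t+1} = \sum_{i=t}^{T-1} A_{\mathrm{cl},t\to i}^\tr P_{i+1} d_i ,
\]
where the $i=t$ term uses $A_{\mathrm{cl},t\to t} = I$. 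Multiplying by $\Sigma_t^{-1}B^\tr$ yields exactly $\sum_{i=t}^{T-1} K_{t\to i}\, d_i$. Combining this with $u_t = -K_t e_t - (\cdots)$ and $e_t = x_t - \psi(r_t)$ gives \eqref{eq:opt-LQT}.

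\textbf{Final step and main obstacle.} At $t = T$ the sum is empty, and the minimizer of $\|u_T\|_R^2$ is $u_T = 0$. This matches the formula provided we set $K_T = 0$, which is consistent with taking $P_{T+1} = 0$. The main obstacle is bookkeeping the index convention for $A_{\mathrm{cl},t\to i}$, in particular its boundary cases and the ordering of the products, so that the transposes compose correctly when unrolling $q_t$. The optimization itself is routine once the ansatz is fixed. Finally, because Lemma~\ref{lem:cost-eq} shows that \eqref{eq:nonlinear-tracking} and \eqref{eq:LQT} have identical costs for every input sequence, this policy is also optimal for the nonlinear problem when evaluated at $x_t = \psi(z_t)$.
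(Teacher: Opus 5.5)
Your proposal is correct and follows essentially the same route as the paper's proof in Appendix~\ref{app:proof-opt-offline-policy}. Both pass to the error coordinate with disturbance $A\psi(r_t)-\psi(r_{t+1})$, run backward dynamic programming with a quadratic-plus-linear value function, derive the linear-term recursion driven by $A_{\mathrm{cl},t}^\tr$, and unroll it (your $q_t$ is the paper's $\frac{1}{2}v_t$). You also make explicit two details the paper leaves implicit: that $\Sigma_t \succ 0$ because $P_{t+1}\succeq 0$, and that at $t=T$ the formula requires reading $K_T$ as $0$.
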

\vspace{-1mm}
By the cost equivalence established in Lemma \ref{lem:cost-eq}, the following result follows directly.
\begin{cor}
    The optimal control sequence from \eqref{eq:opt-LQT} achieves the minimum cost $J_T^*$ for \eqref{eq:nonlinear-tracking}.
\end{cor}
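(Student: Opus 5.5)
Because the cost-to-go of the lifted problem \eqref{eq:LQT} is a quadratic function of the state, I will prove Theorem~\ref{thm:opt-offline-tracking} by backward dynamic programming, using the ansatz
\[
V_t(x) = \|x\|_{P_t}^2 + 2 s_t^\tr x + c_t ,
\]
where $P_t$ comes from the Riccati recursion in Definition~\ref{def:Riccati-recursion-LQT}, $s_t$ is a linear feedforward term depending only on $r_{t:T}$, and $c_t$ is a constant. For convenience I write $\bar r_t := \psi(r_t)$.

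\textbf{Terminal step.} At $t=T$ the action $u_T$ has no effect on any later state, so the optimal choice is $u_T^*=0$. This gives $V_T(x)=\|x-\bar r_T\|_Q^2$, so $P_T=Q$ and $s_T=-Q\bar r_T$. Formula \eqref{eq:opt-LQT} is consistent with this: at $t=T$ the sum is empty, and I adopt the convention $K_T=0$, since the recursion only defines $K_t$ for $t<T$.

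\textbf{Inductive step.} Assume $V_{t+1}$ has the quadratic form above. Then
\[
V_t(x)=\min_u \Big\{ \|x-\bar r_t\|_Q^2+\|u\|_R^2+V_{t+1}(Ax+Bu) \Big\}.
\]
The objective is strictly convex in $u$ because $R\succ0$ and $P_{t+1}\succeq0$. The first-order condition gives
\[
u=-\Sigma_t^{-1}B^\tr\big(P_{t+1}Ax+s_{t+1}\big) = -K_t x-\Sigma_t^{-1}B^\tr s_{t+1}.
\]
Substituting this $u$ back yields the Riccati recursion for $P_t$. Collecting the linear terms in $x$ gives
\[
s_t=-Q\bar r_t + A_{\mathrm{cl},t}^\tr s_{t+1},
\]
using the identity $A^\tr - A^\tr P_{t+1}B\Sigma_t^{-1}B^\tr = (A-BK_t)^\tr$. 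Positive semidefiniteness of the $P_t$ follows inductively, since each $P_t$ equals a minimum of PSD quadratic forms. Because the recursion runs over a finite horizon, I expect detectability and stabilizability to play no role here.

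\textbf{Matching the stated form (the main obstacle).} The remaining work is bookkeeping: rewrite $u_t^*=-K_tx_t-\Sigma_t^{-1}B^\tr s_{t+1}$ in the centered form of \eqref{eq:opt-LQT}. My approach is to introduce the shifted variable
\[
\tilde s_t := s_t + P_t \bar r_t
\]
and derive a recursion for it.
\begin{itemize}
\item Substituting $s_{t+1}=\tilde s_{t+1}-P_{t+1}\bar r_{t+1}$ into the recursion for $s_t$ and using the Riccati identity $P_t - Q = A^\tr P_{t+1}A_{\mathrm{cl},t}$, I expect to obtain
\[
\tilde s_t = A_{\mathrm{cl},t}^\tr\big(\tilde s_{t+1} + P_{t+1}(A\bar r_t - \bar r_{t+1})\big),
\]
which uses $A_{\mathrm{cl},t}^\tr P_{t+1}A = A^\tr P_{t+1}A_{\mathrm{cl},t}$. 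That symmetric identity is the one point I would verify most carefully.
\item Separately, the same manipulation gives
\[
\Sigma_t^{-1}B^\tr s_{t+1} = -K_t\bar r_t + \Sigma_t^{-1}B^\tr\big(\tilde s_{t+1}+P_{t+1}(A\bar r_t-\bar r_{t+1})\big),
\]
which uses $\Sigma_t^{-1}B^\tr P_{t+1}A\bar r_t = K_t\bar r_t$.
\item Unrolling the $\tilde s$ recursion from the terminal condition $\tilde s_T=0$ gives
\[
\tilde s_{t+1}+P_{t+1}(A\bar r_t-\bar r_{t+1}) = \sum_{i=t}^{T-1} A_{\mathrm{cl},t\to i}^\tr P_{i+1}(A\bar r_i-\bar r_{i+1}),
\]
where the $i=t$ term uses $A_{\mathrm{cl},t\to t}=I$ and each later term picks up one more factor $A_{\mathrm{cl},\cdot}^\tr$.
\end{itemize}
Multiplying by $\Sigma_t^{-1}B^\tr$ turns each summand into the feedforward term $K_{t\to i}(A\bar r_i-\bar r_{i+1})$. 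Combining with the first part, $-K_tx_t+K_t\bar r_t=-K_t(x_t-\bar r_t)$, recovers exactly \eqref{eq:opt-LQT}.

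The main risk in this step is index bookkeeping in the transition-matrix convention $A_{\mathrm{cl},t_1\to t_2}$ at the boundary cases $i=t$ and $t=T-1$. I will check these directly, together with the base case. Finally, by Lemma~\ref{lem:cost-eq}, this policy is also optimal for \eqref{eq:nonlinear-tracking}.
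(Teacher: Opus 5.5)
Your proposal is correct, and its decisive step is the paper's own argument: Theorem~\ref{thm:opt-offline-tracking} gives optimality of \eqref{eq:opt-LQT} for \eqref{eq:LQT}, and Lemma~\ref{lem:cost-eq} (equal costs for every $\mathbf{u}$, hence equal optimal values) carries this over to \eqref{eq:nonlinear-tracking}. Everything before your last sentence re-derives Theorem~\ref{thm:opt-offline-tracking}, which you may simply cite; that derivation checks out and matches Appendix~\ref{app:proof-opt-offline-policy} up to centering, since your $\tilde s_t = s_t + P_t\psi(r_t)$ is exactly the paper's $\frac{1}{2}v_t$ in the error coordinates $\tilde x_t = x_t - \psi(r_t)$.
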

\vspace{-1mm}

While \eqref{eq:opt-LQT} provides a closed-form solution to \eqref{eq:nonlinear-tracking}, the policy requires full knowledge of the entire future target trajectory $r_{t:T}$ and the exact Koopman embedding, and is therefore not implementable online. Instead, it serves as an offline benchmark for evaluating the dynamic regret of the online controller introduced in the next section.

\vspace{-2mm}
\section{Predictive Tracking for Koopman-linearizable Systems} \label{sec:predictive-tracking-alg}
\vspace{-1mm}

MPC is a widely used online control strategy that repeatedly solves a finite-horizon optimal control problem using short-term predictions. At each step, only the first control action from the optimized sequence is executed, and the procedure repeats.
Below, we introduce a unified predictive tracking algorithm that encompasses three MPC variants, differing in the level of model knowledge:
\vspace{-1.5mm}
\begin{itemize}
\setlength{\itemsep}{0pt}
    \item \textbf{Nonlinear MPC}, which optimizes directly over the true nonlinear dynamics;
    \vspace{-1mm}
    \item \textbf{Lifted Linear MPC}, which assumes access to a known Koopman embedding;
    \vspace{-1mm}
    \item \textbf{Data-driven MPC}, which uses a data-driven dynamics constraint without model knowledge.
\end{itemize}
Note that Algorithm \ref{alg:MPC} follows the standard MPC procedure up to time step $T-W$. After that, a final open-loop control sequence is applied over the remaining horizon.
\begin{algorithm}[t]
\caption{Predictive Tracking}
\label{alg:MPC}
    \begin{algorithmic}[1] 
    \FOR{$t\in[T-W]$}
        \STATE Observe current state $z_t$ and receive target predictions $r_{t:t+W-1}$.
        \STATE Solve \eqref{eq:nonlinear-MPC}, \eqref{eq:linear-MPC}, or \eqref{eq:DDPC}, and apply the  first cntrol from the optimized sequence.
    \ENDFOR
    \STATE At time step $t=T\!-\!W\!+\!1$, observe current state $z_t$ and receive target predictions $r_{t:T}$.
    \STATE Solve \eqref{eq:nonlinear-MPC}, \eqref{eq:linear-MPC}, or \eqref{eq:DDPC}, and apply the optimized sequence over the last $W$ steps.
    \end{algorithmic}
\end{algorithm}
\vspace{-2mm}
\subsection{Predictive control with known dynamics} \vspace{-1mm}
We first consider the case where the nonlinear dynamics $f$ and its Koopman embedding $(A,B,C,\psi)$ are known. In this setting, Algorithm~\ref{alg:MPC} can be instantiated using either the nonlinear formulation \eqref{eq:nonlinear-MPC} or the lifted linear formulation \eqref{eq:linear-MPC}, defined as follows:
\begin{align}
     & \min_{u_{1:W|t}\!=\!(u_{1|t},\hdots,u_{W|t})}
        \textstyle\sum_{i=1}^{W} \ell(z_{i|t},u_{i|t};r_{t+i-1})\quad 
        \mathrm{s.t.}\ z_{i+1|t} \!=\! f(z_{i|t},u_{i|t}),\ z_{1|t}\!=\!z_t; \tag{\small{N-MPC}} \label{eq:nonlinear-MPC} \\
    & \min_{u_{1:W|t}\!=\!(u_{1|t},\hdots,u_{W|t})} \textstyle\sum_{i=1}^{W} \ell_{\mathrm{lft}}(x_{i|t},u_{i|t};r_{t+i-1}) \quad
        \mathrm{s.t.} \ x_{i+1|t} \!=\! A x_{i|t} \!+\! B u_{i|t},\ x_{1|t}\!=\!\psi(z_t) . \tag{\small{L-MPC}} \label{eq:linear-MPC}
\end{align}

This lifted formulation \eqref{eq:linear-MPC} enables the use of linear control techniques. A key observation is that Theorem~\ref{thm:opt-offline-tracking} can be applied to characterize the resulting online MPC policy.

\vspace{-2mm}
\begin{thm} \label{thm:opt-MPC}
Let $\pi_t^{\textnormal{L-MPC}}(x_t; \mathbf{r})$ denote the online (causal) policy at time $t$ obtained by solving \eqref{eq:linear-MPC} in Algorithm~\ref{alg:MPC}, where $x_t\!=\!\psi(z_t)$. The policy admits the following closed-form expression:
\vspace{-5mm}
\begin{itemize}[leftmargin=2em, itemsep=1pt]
    \item[\textnormal{(i)}] For all $t \in [T - W]$, the policy is given by
    \vspace{-1mm}
    \begin{align} \label{eq:MPC-LQT}
        \pi_t^{\textnormal{L-MPC}}(x_t; \mathbf{r})\!=\! -\bar{K}_1 (x_t - \psi(r_t)) 
        \!-\! \textstyle\sum_{i = t}^{t + W - 2} \bar{K}_{1 \to i - t + 1} \left( A \psi(r_i) \!-\! \psi(r_{i+1}) \right),
    \end{align}
    where $\bar{K}_1 \!:=\! K_{T\!-\!W}$ denotes the feedback gain and $\bar{K}_{1 \to k} \!:=\! {K}_{T\!-\!W\to T\!-\!W+k}$, $k\!\in\![W\!-\!1]$, denote the feedforward gains, as defined by the Riccati recursion in Definition~\ref{def:Riccati-recursion-LQT} and Theorem~\ref{thm:opt-offline-tracking}. 

    \item[\textnormal{(ii)}] For $T - W \!<\! t \!\leq\! T$, the policy coincides with the optimal offline policy from Theorem \ref{thm:opt-offline-tracking}, i.e., 
    \[
    \pi_t^{\textnormal{L-MPC}}(x_t; \mathbf{r}) = \pi_t^*(x_t; \mathbf{r}).
    \]
\end{itemize}
\end{thm}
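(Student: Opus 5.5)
The plan is to recognize that each instance of \eqref{eq:linear-MPC} is itself an instance of the lifted tracking problem \eqref{eq:LQT}, only with a shorter horizon, a shifted time index and a different initial state. Theorem~\ref{thm:opt-offline-tracking} then gives the first optimal input in closed form. Concretely, at time $t\in[T-W]$, relabel the local index $i\in[W]$ and treat \eqref{eq:linear-MPC} as \eqref{eq:LQT} with horizon $W$, initial state $x_{1|t}=\psi(z_t)$, and target sequence $\tilde r_i := r_{t+i-1}$, $i\in[W]$. The stage costs match exactly. There is no terminal cost other than the last stage cost $\|x_{W|t}-\psi(r_{t+W-1})\|_Q^2+\|u_{W|t}\|_R^2$, just as in \eqref{eq:LQT}, where $P_T=Q$.

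\textbf{Step 1: identify the gains.} The Riccati recursion of Definition~\ref{def:Riccati-recursion-LQT} for horizon $W$ starts from $\tilde P_W=Q$ and runs backward for $W-1$ steps. The recursion for horizon $T$ starts from $P_T=Q$, and it is time-invariant because $A,B,Q,R$ are constant. Hence the local matrices satisfy $\tilde P_k = P_{T-W+k}$, $\tilde\Sigma_k=\Sigma_{T-W+k}$ and $\tilde K_k = K_{T-W+k}$ for all $k\in[W]$. The local closed-loop matrices and transition matrices are shifted in the same way: $\tilde A_{\mathrm{cl},1\to k}=A_{\mathrm{cl},T-W\to T-W+k}$. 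Consequently $\tilde K_{1\to k}=\Sigma_{T-W}^{-1}B^\tr A_{\mathrm{cl},T-W\to T-W+k}^\tr P_{T-W+k+1}=K_{T-W\to T-W+k}$.

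The delicate point, and the main obstacle, is the index bookkeeping. Local index $1$ must correspond to global index $T-W$, as the statement's $\bar K_1 = K_{T-W}$ asserts. If the conventions in Definition~\ref{def:Riccati-recursion-LQT} (with $P_T=Q$ and $K_t$ defined for $t\le T-1$) instead make local index $1$ correspond to $T-W+1$, I would check whether the statement's indexing is off by one. The argument itself is unchanged either way: the gains are those of the horizon-$T$ recursion at the matching distance-to-go.

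\textbf{Step 2: read off the first input.} Apply \eqref{eq:opt-LQT} with local time $1$ and horizon $W$. This gives $u_{1|t}=-\tilde K_1(x_t-\psi(\tilde r_1))-\sum_{j=1}^{W-1}\tilde K_{1\to j}(A\psi(\tilde r_j)-\psi(\tilde r_{j+1}))$. Substituting $\tilde r_j=r_{t+j-1}$ and $j=i-t+1$ turns the sum into $\sum_{i=t}^{t+W-2}\bar K_{1\to i-t+1}(A\psi(r_i)-\psi(r_{i+1}))$, which is \eqref{eq:MPC-LQT}. Optimality, and uniqueness of the minimizer, follow since $R\succ0$ makes the quadratic program strictly convex in $u_{1:W|t}$. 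Only the first action is applied, which establishes part (i).

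\textbf{Step 3: the tail.} For part (ii), at $t_0=T-W+1$ the problem \eqref{eq:linear-MPC} is \eqref{eq:LQT} restricted to the window $[t_0,T]$ with initial state $x_{t_0}=\psi(z_{t_0})$ and the true remaining targets $r_{t_0:T}$. Its Riccati recursion is exactly $P_{t_0},\dots,P_T$ of the full problem. Theorem~\ref{thm:opt-offline-tracking}, invoked on the window via the same time shift (or via Bellman's principle of optimality), shows that the optimal open-loop sequence is generated by the feedback law $\pi_t^*(x_t;\mathbf r)$ of \eqref{eq:opt-LQT} for $t\in\{t_0,\dots,T\}$. The system is deterministic and the plan is applied open loop, so the realized states coincide with the planned ones. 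Hence each applied action equals $\pi_t^*(x_t;\mathbf r)$.
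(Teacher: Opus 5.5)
Your route is the paper's own. The paper notes that each instance of \eqref{eq:linear-MPC} is a horizon-$W$ copy of \eqref{eq:LQT}, applies Theorem~\ref{thm:opt-offline-tracking}, and records the time shift in the remark after Definition~\ref{def:Riccati-recursion-MPC}. Your Bellman argument for part (ii) is also fine.

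The gap is Step~1, the only substantive step of part (i): you leave it unresolved and internally inconsistent. Your identification $\tilde P_k=P_{T-W+k}$ is the correct one, since it is forced by $\tilde P_W=Q=P_T$. It yields $\tilde\Sigma_1=\Sigma_{T-W+1}$ and $\tilde K_1=K_{T-W+1}$. It also yields $\tilde A_{\mathrm{cl},1\to k}=\tilde A_{\mathrm{cl},k}\cdots\tilde A_{\mathrm{cl},2}=A_{\mathrm{cl},T-W+1\to T-W+k}$, hence $\tilde K_{1\to k}=K_{T-W+1\to T-W+k}$. Your next line does not follow from this: it uses $\Sigma_{T-W}^{-1}$ and $A_{\mathrm{cl},T-W\to T-W+k}$, which has one closed-loop factor too many. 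Your sentence that local index $1$ ``must'' correspond to $T-W$ simply assumes the conclusion.

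In fact the indexing as stated cannot be proved, because it is false.
\begin{itemize}
\item For $W=1$, the only decision $u_{1|t}$ enters the subproblem only through $\|u_{1|t}\|_R^2$, so $u_{1|t}=0$. The stated formula instead gives $-K_{T-1}(x_t-\psi(r_t))$, and $K_{T-1}=(R+B^\tr QB)^{-1}B^\tr QA\neq0$ in general.
\item For $W=2$ with scalar $A=B=Q=R=1$, the MPC action is $-\frac{1}{2}(x_t-\psi(r_t))-\frac{1}{2}w_t$. The statement gives $-\frac{3}{5}(x_t-\psi(r_t))-\frac{1}{5}w_t$.
\end{itemize}

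Your own Step~3 settles the index. The gains of \eqref{eq:linear-MPC} depend only on $(A,B,Q,R,W)$, not on $t$ or on the targets. At $t=T-W+1$ the subproblem is exactly the tail of \eqref{eq:LQT}, whose first-step gains are $K_{T-W+1}$ and $K_{T-W+1\to i}$. So what your argument proves is part (i) with $\bar K_1=K_{T-W+1}$ and $\bar K_{1\to k}=K_{T-W+1\to T-W+k}$, $k\in[W-1]$.

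The paper makes the same off-by-one slip: its map $t\mapsto T-W+t-1$ would give $\bar P_W=P_{T-1}\neq Q$, contradicting $\bar P_W:=Q$. The shift appears harmless downstream, since it changes only constants (and $\Delta_{\mathrm{stab}}$ by one). Your proof should still state and prove the corrected identification rather than bend the computation to match the statement.
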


\vspace{-2mm}
For $t\in[T - W]$, the MPC gains $\bar{K}_1$ and $\{\bar{K}_{1 \to k}\}_{k=1}^{W}$ in \eqref{eq:MPC-LQT} are time-invariant, in contrast to the time-varying offline optimal gains $K_t$ and $\{K_{t\to i}\}_{i=t}^{T-1}$ in Theorem~\ref{thm:opt-offline-tracking}.
This invariance stems from the constant weight matrices $Q$ and $R$. If the weights were time-varying, the resulting MPC gains would also vary with time~\citep{zhang2021regret}.
The following result is immediate from Lemma \ref{lem:cost-eq}.
\begin{cor}
    Let $\pi_t^{\textnormal{N-MPC}}(z_t; \mathbf{r})$ denote the online (causal) policy at time $t$ obtained by solving \eqref{eq:nonlinear-MPC} in Algorithm \ref{alg:MPC}. Then $\pi_t^{\textnormal{N-MPC}}(z_t; \mathbf{r})=\pi_t^{\textnormal{L-MPC}}(x_t; \mathbf{r})$ for all $t\in[T]$ where $x_t=\psi(z_t)$.
\end{cor}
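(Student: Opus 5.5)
The plan is to show that, at every time step, the optimization problem \eqref{eq:nonlinear-MPC} solved by Algorithm~\ref{alg:MPC} coincides with \eqref{eq:linear-MPC}: same decision variables, same feasible set, same objective. Their minimizers are then identical, and so are the applied actions. This is the finite-horizon analogue of Lemma~\ref{lem:cost-eq}, applied with the window $W$ in place of $T$, initial state $z_t$ in place of $z_1$, and the shifted targets $r_{t:t+W-1}$.

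First, fix $t$ and any candidate sequence $u_{1:W|t}$. Both problems have unconstrained decision variables in $\mathbb{R}^{Wn_u}$, so the feasible sets agree. Let $z_{i|t}$ be the nonlinear rollout from $z_{1|t}=z_t$, and let $x_{i|t}$ be the lifted rollout from $x_{1|t}=\psi(z_t)$. By induction on $i$, using the Koopman-linearizability in Assumption~\ref{assump:koopman}, we get $x_{i|t}=\psi(z_{i|t})$. The base case holds by construction. For the inductive step,
\[
\psi(z_{i+1|t})=\psi(f(z_{i|t},u_{i|t}))=A\psi(z_{i|t})+Bu_{i|t}=Ax_{i|t}+Bu_{i|t}=x_{i+1|t}.
\]
Consequently $z_{i|t}=C\psi(z_{i|t})=Cx_{i|t}$. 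Together with $r_{t+i-1}=C\psi(r_{t+i-1})$ and $Q=C^\tr Q_zC$, this gives stagewise equality $\ell(z_{i|t},u_{i|t};r_{t+i-1})=\ell_{\mathrm{lft}}(x_{i|t},u_{i|t};r_{t+i-1})$, exactly as in the proof of Lemma~\ref{lem:cost-eq}. The two objective functions of $u_{1:W|t}$ are therefore identical.

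Second, since the objectives agree as functions, the argmin sets agree. To speak of a well-defined policy, I will note that \eqref{eq:linear-MPC} is a strictly convex quadratic program in $u_{1:W|t}$ because $R\succ0$. Its minimizer is therefore unique, and by Theorem~\ref{thm:opt-MPC} it is the closed form \eqref{eq:MPC-LQT}. Hence \eqref{eq:nonlinear-MPC} also has this unique minimizer, and its first component, used for $t\in[T-W]$, equals $\pi_t^{\textnormal{L-MPC}}(\psi(z_t);\mathbf r)$. The final block $t=T-W+1,\dots,T$ needs a separate argument because the whole optimized sequence is applied there, not just its first component. Along the closed-loop trajectory the applied actions coincide, so by the same induction the realized states satisfy $x_s=\psi(z_s)$ for every $s$, and both the applied open-loop sequences and the resulting states agree.

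The only point requiring care is uniqueness. A global-minimizer argument alone only gives equal argmin sets, and the policies could differ through tie-breaking. Strict convexity of the lifted QP resolves this, so I do not expect a genuine obstacle.
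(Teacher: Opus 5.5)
Your proposal is correct and follows the paper's route. The paper simply invokes the cost equivalence of Lemma~\ref{lem:cost-eq}, which you re-derive on each length-$W$ window starting from $z_t$ to show that the two MPC programs have identical objectives in $u_{1:W|t}$ and hence identical minimizers; your explicit uniqueness argument (strict convexity from $R\succ 0$) and your treatment of the final open-loop block are details the paper leaves implicit.
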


\vspace{-3mm}
\subsection{Data-driven predictive control with unknown dynamics}
\vspace{-1mm}
We now consider the case where both the dynamics and Koopman embedding are unknown, but a sufficiently long input-state trajectory $(\mathbf{u}_\D := \bar{u}_{1:n_\D}, \mathbf{z}_\D := \bar{z}_{1:n_\D})$ is available.
Using only this data, we design a predictive controller referred to as
\textit{data-driven predictive control} (DDPC).

\textbf{Online DDPC for tracking.}
We instantiate Algorithm \ref{alg:MPC} by solving:
\begin{equation*} 
     \min_{u_{1:W|t}, z_{1:W|t}, g} \textstyle\sum_{i=1}^{W}  \|z_{i|t} - r_{t+i-1}\|_{Q_z}^2 \!+\! \|u_{i|t}\|_R^2 
     \quad \mathrm{s.t.} \
     H_\D {g} \!=\! [\mathbf{u}_{\ini,t}^\tr, u_{1:W | t}^\tr, \mathbf{z}_{\ini,t}^\tr, z_{1:W | t}^\tr]^\tr, \tag{\small{DDPC}} \label{eq:DDPC}
\end{equation*}
where $H_\D$ is a data matrix constructed from the offline trajectory $(\mathbf{u}_\D, \mathbf{z}_\D)$ as described below. Moreover, $\mathbf{u}_{\ini,t} \!:=\! u_{t-T_\ini:t-1} \in \mathbb{R}^{n_u T_\ini}$ and $\mathbf{z}_{\ini,t} \!:=\! z_{t-T_\ini:t-1} \in \mathbb{R}^{n_z T_\ini}$ denote the most recent length-$T_\ini$ input-state trajectory, which serves as the initial condition for the predictive control.
With a sufficiently rich library $H_\D$ and long $(\mathbf{u}_{\ini,t}, \mathbf{z}_{\ini,t})$, the equality constraint provides a model-free representation of the nonlinear system, as guaranteed by the extended Willem's fundamental lemma \citep{shang2024willems}.  
The formulation \eqref{eq:DDPC} offers two key advantages: (i) it bypasses explicit system identification and the need for choosing lifting functions, and (ii) it converts the nonlinear constraint into a linear one.
As shown below, the resulting control actions coincide with those from \eqref{eq:nonlinear-MPC} and \eqref{eq:linear-MPC}.
We next detail the data-driven constraint and the data requirement. 

\textbf{Construction of data library.}
The matrix $H_\D$ is obtained by partitioning the length-$n_\D$ trajectory $(\mathbf{u}_\D, \mathbf{z}_\D)$ into $l$ overlapping segments of length $L\!:=\! T_\ini + W$, where $l \!=\! n_\D -L + 1$.
Specifically,
\vspace{-3mm}
\begin{equation*}
\begin{small}
\begingroup
    \setlength\arraycolsep{2pt}
\def\arraystretch{0.9} 
H_\D :=
\begin{bmatrix}
    \mathbf{u}_\D^1 & \mathbf{u}_\D^2 & \cdots & \mathbf{u}_\D^l \\
    \mathbf{z}_\D^1 & \mathbf{z}_\D^2 & \cdots & \mathbf{z}_\D^l
\end{bmatrix}
= \begin{bmatrix}
    \mathcal{H}_L(\mathbf{u}_\D)  \\ \mathcal{H}_L(\mathbf{z}_\D)
\end{bmatrix}, \quad \mathcal{H}_L(w_{1:n_\D}) := \begin{bmatrix}
    w_1 & w_2 & \cdots & w_{n_\D-L+1} \\
    w_2 & w_3 & \cdots & w_{n_\D-L+2} \\
    \vdots & \vdots & \ddots & \vdots \\
    w_L & w_{L+1} & \cdots & w_{n_\D}
\end{bmatrix}, 
\endgroup
\end{small}
\end{equation*}
where $\mathbf{u}_\D^i \!=\! \bar{u}_{i:i+L-1}$ and $\mathbf{z}_\D^i \!=\! \bar{z}_{i:i+L-1}$ for $i\in[\ell]$. 
To ensure that the data is sufficiently informative to represent the nonlinear system, we impose the following condition \citep{shang2024willems}.

\vspace{-2mm}

\begin{defn}[Lifted excitation]
\label{def:excitation}
We say that $H_\D$ provides \emph{lifted excitation of order $L$} if the matrix
$
\tiny 
\begin{bmatrix}
\mathbf{u}_\D^1 & \cdots & \mathbf{u}_\D^l \\
\psi(z_1^1) & \cdots & \psi(z_1^l)
\end{bmatrix}
\in \mathbb{R}^{(n_uL + n_x) \times l}
$
has full row rank, where $z_1^i \in \mathbb{R}^{n_z}$ denotes the first element of $\mathbf{z}_\D^i$.
\end{defn}
\vspace{-2mm}

Although the Koopman embedding is unknown, its dimension $n_x$ can be identified from sufficiently long data \citep{markovsky2022identifiability}, and an upper bound often suffices in practice.
For Koopman-linearizable systems, a data matrix $H_\D$ satisfying lifted excitation is guaranteed to exist and can be constructed by designing $\mathbf{u}_\D$ \citep{shang2024willems}. 
With lifted excitation and an initial trajectory longer than $n_x$, we are ready to present the model-free linear representation.

\vspace{-1mm}

\begin{thm}[Model-free representation \citep{shang2024willems}]
\label{thm:dd-k}
    Consider a Koopman-linearizable nonlinear system. Suppose $H_\D$ satisfies lifted excitation of order $L$. At time $t$, we collect the most recent input-state sequence $\mathbf{u}_{\ini, t}, \mathbf{z}_{\ini, t}$ of length $T_\ini \ge n_x$. Then, the concatenated sequence
    $[\mathbf{u}_{\ini}^\tr, \mathbf{u}_\f^\tr, \mathbf{z}_{\ini}^\tr, \mathbf{z}_\f^\tr]^\tr$, where $\mathbf{u}_\f \in \mathbb{R}^{n_u W}, \mathbf{z}_\f \in \mathbb{R}^{n_zW}$, is a valid length-$L$ trajectory if and only if there exists $g \in \mathbb{R}^{l}$ such that $H_\D g = [\mathbf{u}_{\ini}^\tr, \mathbf{u}_\f^\tr, \mathbf{z}_{\ini}^\tr, \mathbf{z}_\f^\tr]^\tr.$
    \vspace{-1mm}
\end{thm}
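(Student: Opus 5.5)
The plan is to transfer everything to the lifted linear system and prove the two implications separately, using lifted excitation to parametrize all lifted trajectories. Throughout, fix a Koopman embedding $(A,B,C,\psi)$. For each data column $i$, the segment $(\mathbf{u}_\D^i,\mathbf{z}_\D^i)$ is a genuine trajectory. With $x_1^i=\psi(z_1^i)$, its lifted states are determined by $x_1^i$ and $\mathbf{u}_\D^i$, and its original states satisfy $\mathbf{z}_\D^i = \mathcal{O}_L x_1^i + \mathcal{T}_L \mathbf{u}_\D^i$. Here $\mathcal{O}_L=[C^\tr,(CA)^\tr,\dots,(CA^{L-1})^\tr]^\tr$ and $\mathcal{T}_L$ is the block-Toeplitz matrix with blocks $CA^{k}B$. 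Stacking the columns gives the factorization
\begin{equation*}
H_\D = \begin{bmatrix} I & 0\\ \mathcal{T}_L & \mathcal{O}_L\end{bmatrix}\begin{bmatrix}\mathbf{u}_\D^1 & \cdots & \mathbf{u}_\D^l\\ \psi(z_1^1)&\cdots&\psi(z_1^l)\end{bmatrix},
\end{equation*}
with the block rows reordered to match the ordering $(\mathbf{u}_\ini,\mathbf{u}_\f,\mathbf{z}_\ini,\mathbf{z}_\f)$.

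\textbf{Necessity.} Suppose $[\mathbf{u}_\ini^\tr,\mathbf{u}_\f^\tr,\mathbf{z}_\ini^\tr,\mathbf{z}_\f^\tr]^\tr$ is a valid length-$L$ trajectory. Its lifted initial state $x_1=\psi(z_{1})$ exists, and the whole stacked vector equals the left factor applied to $[\mathbf{u}^\tr, x_1^\tr]^\tr$. By lifted excitation the right factor has full row rank, so some $g$ maps to $[\mathbf{u}^\tr,x_1^\tr]^\tr$. Hence $H_\D g$ reproduces the trajectory.

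\textbf{Sufficiency.} Given $g$, set $\xi := \sum_i g_i \psi(z_1^i)$. Then $H_\D g$ is precisely the input–output sequence of the lifted linear system started from $\xi$ under the inputs $\mathbf{u}=[\mathbf{u}_\ini;\mathbf{u}_\f]$. The difficulty is that $\xi$ need not lie in the image of $\psi$, so this is a priori only a trajectory of the linear model, not of $f$. I expect this to be the main obstacle, and the initial segment of length $T_\ini\ge n_x$ is what resolves it.

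To handle it, I would read the hypothesis as saying that $(\mathbf{u}_\ini,\mathbf{z}_\ini)$ is the actually measured past of the system, so it is generated by some true lifted state $x_{t-T_\ini}=\psi(z_{t-T_\ini})$. Two descriptions of the same initial segment then give
\begin{equation*}
\mathcal{O}_{T_\ini}(\xi - x_{t-T_\ini}) = 0.
\end{equation*}
One then needs that the unobservable component does not affect the future outputs, that is, $\mathcal{O}_{L}$ and $\mathcal{O}_{T_\ini}$ share the same kernel. This follows from Cayley–Hamilton, because $T_\ini\ge n_x$ makes $\ker\mathcal{O}_{T_\ini}$ equal to the unobservable subspace, which is $A$-invariant and annihilated by $C$. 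Therefore the future outputs $\mathbf{z}_\f$ coincide with those produced from $x_{t-T_\ini}$ under $\mathbf{u}_\f$. By the Koopman property, these are the states $f$ actually generates from $z_t$, so the concatenation is a valid trajectory.

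The remaining care is bookkeeping: each data column must genuinely obey $z=C\psi(z)$ along the segment, and the permutation of the block rows must be tracked correctly.
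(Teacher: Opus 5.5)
The paper does not prove this theorem; it is imported without proof from \citep{shang2024willems}. Your argument is correct and is essentially the standard proof behind that result.

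For the ``only if'' direction, you factor $H_\D$ through the full-row-rank lifted-excitation matrix, which uses only lifted excitation. For the ``if'' direction, you observe that $H_\D g$ is a priori only a trajectory of the lifted linear model, started from some $\xi$ that need not lie in the image of $\psi$. You then use the measured past of length $T_\ini\ge n_x$ to show that $\xi$ agrees with the true lifted state modulo the unobservable subspace. Since that subspace is $A$-invariant and annihilated by $C$, the future outputs are the genuine ones, and no observability of $(C,A)$ is needed. This direction uses only the measured past and $T_\ini\ge n_x$, not lifted excitation.

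Your insistence that $(\mathbf{u}_\ini,\mathbf{z}_\ini)$ be the actually measured past is essential, not a convenience. For an arbitrary initial segment the ``if'' direction fails in general, because lifted excitation lets $H_\D g$ realize linear trajectories from lifted states not of the form $\psi(z)$. One minor point: no row reordering is needed, since $H_\D=[\mathcal{H}_L(\mathbf{u}_\D);\mathcal{H}_L(\mathbf{z}_\D)]$ already matches the ordering $[\mathbf{u}_\ini;\mathbf{u}_\f;\mathbf{z}_\ini;\mathbf{z}_\f]$.
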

\vspace{-2mm}

To apply the above representation in \eqref{eq:DDPC}, we set $\mathbf{u}_\f$, $\mathbf{z}_\f$ as decision variables $u_{1:W|t}, z_{1:W|t}$.
By Theorem \ref{thm:dd-k}, the feasible set of $(u_{1:W|t}, z_{1:W|t})$ includes all trajectories consistent with $(\mathbf{u}_{\ini,t}, \mathbf{z}_{\ini,t})$, directly implying the following policy equivalence with \eqref{eq:nonlinear-MPC} and \eqref{eq:linear-MPC}.
\vspace{-1mm}
\begin{cor} \label{cor:policy-equivalence}
    Let $\pi_t^{\textnormal{DDPC}}(z_t;\mathbf r)$ denote the online policy obtained by solving \eqref{eq:DDPC} in Algorithm~\ref{alg:MPC}. Then $\pi_t^{\textnormal{DDPC}}(z_t;\mathbf r)=\pi_t^{\textnormal{N-MPC}}(z_t;\mathbf r)=\pi_t^{\textnormal{L-MPC}}(x_t;\mathbf r)$ for all $t\in[T]$ where $x_t=\psi(z_t)$.
\end{cor}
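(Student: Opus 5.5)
The plan is to show that, at every time $t$, the optimization problem \eqref{eq:DDPC} has the same feasible set of predicted input--state trajectories, and the same objective, as \eqref{eq:nonlinear-MPC}. Then the minimizers, and in particular the applied actions, agree. The second equality, $\pi_t^{\textnormal{N-MPC}}=\pi_t^{\textnormal{L-MPC}}$, is already the preceding corollary, which follows from Lemma~\ref{lem:cost-eq}. So it suffices to prove $\pi_t^{\textnormal{DDPC}}(z_t;\mathbf r)=\pi_t^{\textnormal{N-MPC}}(z_t;\mathbf r)$.

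\textbf{Step 1: build a valid initial segment.} The past data $(\mathbf{u}_{\ini,t},\mathbf{z}_{\ini,t})$ is generated by the true system, so it is a valid length-$T_\ini$ trajectory. Since $T_\ini\ge n_x$, by Theorem~\ref{thm:dd-k} a pair $(u_{1:W|t},z_{1:W|t})$ satisfies the DDPC constraint for some $g$ if and only if the concatenation is a valid length-$L$ trajectory of $f$. Validity means two things. First, $z_{1|t}=f(z_{t-1},u_{t-1})=z_t$, i.e., the next step from the last initial sample is consistent. Second, $z_{i+1|t}=f(z_{i|t},u_{i|t})$ for $i\in[W-1]$. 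I would handle the indexing carefully here. Depending on the convention, the window $z_{t-T_\ini:t-1}$ ends at $t-1$, so the first predicted state is forced to equal $z_t$, which is exactly the initial condition $z_{1|t}=z_t$ in \eqref{eq:nonlinear-MPC}.

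\textbf{Step 2: match the feasible sets.} Step 1 gives that the projection of the DDPC feasible set onto $(u_{1:W|t},z_{1:W|t})$ equals $\{(u,z): z_{1|t}=z_t,\ z_{i+1|t}=f(z_{i|t},u_{i|t})\}$. Any $u_{1:W|t}$ can be rolled out through $f$, so every input sequence is feasible with a unique state sequence. Both problems therefore reduce to the same unconstrained problem over $u_{1:W|t}$ with identical cost $\sum_i \ell(z_{i|t},u_{i|t};r_{t+i-1})$. The variable $g$ does not enter the cost.

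\textbf{Step 3: uniqueness and the final window.} By Lemma~\ref{lem:cost-eq}, this reduced cost equals the lifted cost, which is a strictly convex quadratic in $u_{1:W|t}$ because $R\succ0$. Hence the minimizer is unique, and uniqueness is needed for the policies to be well defined and equal. The closed form of the minimizer is given by Theorem~\ref{thm:opt-MPC}. The same argument applies to the terminal step $t=T-W+1$, where the whole optimized sequence is applied; the applied sequences coincide, and so do the resulting state trajectories. An induction over $t$ then shows that the closed-loop histories $(\mathbf{u}_{\ini,t},\mathbf{z}_{\ini,t})$ coincide under the two policies.

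\textbf{Main obstacle.} The step I expect to need the most care is Step 1 at early times $t\le T_\ini$, where a full past window does not yet exist. I would handle this by assuming a valid pre-trajectory of length $T_\ini$ is available before $t=1$, for instance by prepending a warm-up segment, so that Theorem~\ref{thm:dd-k} applies at every $t$. I would also state explicitly that the representation uses exact, noise-free data.
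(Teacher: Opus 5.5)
Your proposal is correct and follows the paper's argument. Theorem~\ref{thm:dd-k} identifies the projection of the DDPC feasible set onto $(u_{1:W|t},z_{1:W|t})$ with the trajectories of $f$ consistent with $(\mathbf{u}_{\ini,t},\mathbf{z}_{\ini,t})$, which forces $z_{1|t}=z_t$; hence DDPC coincides with N-MPC, and N-MPC coincides with L-MPC by the preceding corollary. Your additions are details the paper leaves implicit, and you handle them correctly: uniqueness of the minimizer via $R\succ0$ in the lifted cost, and the need for a valid warm-up window when $t\le T_\ini$.
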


\vspace{-5mm}
\section{Performance Guarantees for Predictive Tracking} \label{sec:regret-analysis}

\vspace{-2mm}

We demonstrate the effectiveness of Algorithm~\ref{alg:MPC}, in particular \eqref{eq:DDPC}, by establishing a dynamic regret bound and outlining the main analysis strategy. By Corollary \ref{cor:policy-equivalence}, the three MPC formulations in Algorithm~\ref{alg:MPC} produce identical control actions given the same target trajectory. We therefore denote the common policy by $\pi^{\textnormal{MPC}}\!=\!\{\pi_t^{\textnormal{MPC}}\}_{t=1}^T$ with resulting control input $\mathbf{u}^{\textnormal{MPC}}\!=\!\{u_t^{\textnormal{MPC}}\}_{t=1}^T$. Specifically, our analysis relies on the expression for $\pi_t^{\textnormal{L-MPC}}(x_t; \mathbf{r})$ in \eqref{eq:MPC-LQT}.

\vspace{-2mm}
\subsection{Dynamic regret bound}
\vspace{-1mm}
Before presenting the regret bound, we introduce several key system-dependent quantities used in the analysis.
Under Assumption~\ref{assump:LQ}, there exists a unique $P_\infty\succeq0$ satisfying the discrete algebraic Riccati equation (DARE),
$
P_\infty = Q + A^\tr P_\infty A - A^\tr P_\infty B (R + B^\tr P_\infty B)^{-1} B^\tr P_\infty A,
$
with the optimal gain $K_\infty\!:=\! (R+B^\tr P_{\infty}B)^{-1}B^\tr P_\infty A$ and closed-loop matrix $A_{\mathrm{cl},\infty}\!:=\!A-BK_\infty$ \citep{bertsekas2012dynamic,zhou1996robust}.

The first quantity $\rho_\infty \in (0,1)$ denotes the exponential rate at which the Riccati recursion converges to $P_\infty$ \citep{hager1976convergence}.
Second, the factor ${\gamma}_\infty := \frac{1}{2}\left(1+\rho(A_{\mathrm{cl},\infty})\right) \in (0,1)$ captures the closed-loop stability of state transition matrices $A_{\mathrm{cl},t_1\to\ t_2}$. Finally, the stabilizing window $\Delta_{\mathrm{stab}} := \mathcal{O}(\log\left(1 - \rho(A_{\mathrm{cl},\infty})\right)^{-1})$ represents the minimal horizon length for the system to exhibit stability.
All these quantities depend only on the lifted system parameters $(A, B, Q, R)$ and are independent of the full horizon length $T$.
Throughout, the $\mathcal{O}(\cdot)$ notation hides constants that depend only on $(A, B, Q, R, D_\psi)$ but not on $T$.

\vspace{-3mm}
\begin{thm}[Dynamic regret] \label{thm:regret-MPC}
Let $\lambda_\infty \!:=\! \max\{\rho_\infty, {\gamma}_\infty\}$ and suppose that the prediction horizon satisfies $W \!\geq\! \Delta_{\mathrm{stab}}$. Under Assumptions~\ref{assump:koopman} and~\ref{assump:LQ}, the dynamic regret of Algorithm~\ref{alg:MPC} satisfies
\[
\mathrm{Reg}_T(\pi^{\textnormal{MPC}}) = \mathcal{O}(W^2\lambda_\infty^{2W} T)\footnote{Hidden constants in $\mathcal{O}(\cdot)$ also do not depend on \( W \) throughout entire analysis. Moreover, this bound can be tightened to $\mathcal{O}(\rho_{\infty}^{2W}T)$ if $\max\{\rho_\infty, {\gamma}_\infty\}=\rho_\infty$; see Appendix~\ref{app:proof-improved-bound}.}.
\]
\end{thm}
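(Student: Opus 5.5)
By Corollary~\ref{coro:reg-eq} and Corollary~\ref{cor:policy-equivalence}, it suffices to bound the lifted regret $\mathrm{Reg}_T^{\mathrm{lft}}(\pi^{\textnormal{L-MPC}})$ for the linear quadratic tracking problem \eqref{eq:LQT}, using the closed forms of Theorems~\ref{thm:opt-offline-tracking} and~\ref{thm:opt-MPC}. Write $w_i := A\psi(r_i)-\psi(r_{i+1})$, so $\|w_i\| \le (\|A\|+1)D_\psi$ by Assumption~\ref{assump:koopman}, and $\psi(r_t)$ is uniformly bounded.

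The first step is a \emph{performance-difference identity} for LQ tracking. Define the optimal cost-to-go
\[
V_t(x) = \|x\|_{P_t}^2 + 2 x^\tr s_t + c_t,
\]
with affine term $s_t$ generated by the backward recursion. Completing the square in the Bellman equation gives, for any input $u_t$ applied at state $x_t$,
\[
\ell_{\mathrm{lft}}(x_t,u_t;r_t) + V_{t+1}(Ax_t+Bu_t) - V_t(x_t) = \|u_t - \pi^*_t(x_t;\mathbf r)\|_{\Sigma_t}^2 .
\]
Summing along the MPC trajectory and telescoping then yields
\[
\mathrm{Reg}_T^{\mathrm{lft}} = \sum_{t=1}^{T} \|u_t^{\textnormal{MPC}}-\pi_t^*(x_t^{\textnormal{MPC}};\mathbf r)\|_{\Sigma_t}^2 .
\]
By Theorem~\ref{thm:opt-MPC}(ii), the terms with $t>T-W$ vanish, so only $t\in[T-W]$ contributes. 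For those $t$, I decompose $u_t^{\textnormal{MPC}}-\pi_t^*$ into three pieces:
\begin{enumerate}
\item[(a)] the feedback mismatch $(K_t-K_{T-W})(x_t-\psi(r_t))$;
\item[(b)] the feedforward mismatch $\sum_{k=1}^{W-1}(K_{t\to t+k-1}-K_{T-W\to T-W+k-1})w_{t+k-1}$;
\item[(c)] the truncated tail $\sum_{i=t+W-1}^{T-1}K_{t\to i}w_i$.
\end{enumerate}

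The second step bounds the gains. From the exponential convergence of the Riccati recursion \citep{hager1976convergence}, $\|P_t-P_\infty\| = \mathcal{O}(\rho_\infty^{T-t})$, and hence $\|K_t-K_\infty\| = \mathcal{O}(\rho_\infty^{T-t})$. Since $T-t\ge W$ and $T-(T-W)=W$, piece (a) has size $\mathcal{O}(\rho_\infty^{W})\|x_t-\psi(r_t)\|$. For products of closed-loop matrices, once the gains are within $\mathcal{O}(\rho_\infty^{\Delta})$ of $K_\infty$, a perturbation argument gives $\|A_{\mathrm{cl},t_1\to t_2}\| = \mathcal{O}(\gamma_\infty^{t_2-t_1})$. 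This is where $W\ge\Delta_{\mathrm{stab}}$ enters: it guarantees that the perturbation is small enough for the closed-loop products to inherit the rate $\gamma_\infty>\rho(A_{\mathrm{cl},\infty})$. Consequently $\|K_{t\to i}\| = \mathcal{O}(\gamma_\infty^{i-t})$, and piece (c) is $\mathcal{O}(\gamma_\infty^{W}) = \mathcal{O}(\lambda_\infty^{W})$.

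For piece (b), I telescope the difference of the products $A_{\mathrm{cl}}$ and of the $P_{i+1}$ terms. Each gain difference is $\mathcal{O}(\rho_\infty^{W})$, and each product contributes a geometric factor $\gamma_\infty^{k}$. A difference of $k$-fold products therefore costs $\mathcal{O}(k\lambda_\infty^{W+k})$, and summing over $k$ gives $\mathcal{O}(W\lambda_\infty^{W})$. This telescoping step is the source of the factor $W$, and hence of the $W^2$ after squaring.

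The third step, which I expect to be the main obstacle, is \emph{uniform boundedness of the MPC state}, $\|x_t^{\textnormal{MPC}}-\psi(r_t)\| = \mathcal{O}(1)$ independently of $T$. The MPC closed loop is
\[
x_{t+1} = (A-BK_{T-W})x_t + \text{bounded feedforward}.
\]
Since $Q$ is only positive semidefinite, stability cannot be read off from a Lyapunov decrease of the stage cost. Instead I will use $\|K_{T-W}-K_\infty\| = \mathcal{O}(\rho_\infty^{W})$. For $W\ge\Delta_{\mathrm{stab}}$ this makes $\rho(A-BK_{T-W}) \le \gamma_\infty < 1$, with uniformly bounded powers, by continuity of the spectral radius together with a Lyapunov-equation perturbation bound for $A_{\mathrm{cl},\infty}$. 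Note that detectability is needed precisely to have the stabilizing $P_\infty$ and the convergence rate $\rho_\infty$. Given stability and the bounded inputs $w_i$ and $\psi(r_t)$, the MPC state stays bounded.

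Combining the three steps, each per-step mismatch is $\mathcal{O}(W\lambda_\infty^{W})$. Squaring and summing over the at most $T$ steps with $t\in[T-W]$, and using $\|\Sigma_t\| = \mathcal{O}(1)$, gives
\[
\mathrm{Reg}_T(\pi^{\textnormal{MPC}}) = \mathcal{O}(W^2\lambda_\infty^{2W}T).
\]
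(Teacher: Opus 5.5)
Your architecture matches the paper's: the cost-difference identity $\mathrm{Reg}_T^{\mathrm{lft}}=\sum_t\|u_t^{\textnormal{MPC}}-u_t^*\|_{\Sigma_t}^2$, the split into feedback, feedforward and truncation pieces, closed-loop product decay from strong stability of $A_{\mathrm{cl},\infty}$ plus Hager's Riccati convergence, and the MPC state bound from $\|K_{T-W}-K_\infty\|=\mathcal{O}(\rho_\infty^W)$ when $W\ge\Delta_{\mathrm{stab}}$. Piece (a) and the stability part of step 3 are fine.

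Piece (b), however, rests on an oversight. Gains near the end of a finite-horizon Riccati recursion are \emph{not} close to $K_\infty$, and they enter every MPC feedforward gain. The gain $K_{T-W\to T-W+k-1}$ contains the factors $A_{\mathrm{cl},T-W+j}$, $j<k$, and the weight $P_{T-W+k}$. For these one only has $\|K_{T-W+j}-K_{t+j}\|=\mathcal{O}(\rho_\infty^{W-j})$ and $\|P_{T-W+k}-P_{t+k}\|=\mathcal{O}(\rho_\infty^{W-k})$. For $k$ near $W$ these mismatches are $\Theta(1)$ in general. So the claim ``each gain difference is $\mathcal{O}(\rho_\infty^W)$'' is false, and your per-$k$ estimate $\mathcal{O}(k\lambda_\infty^{W+k})$ does not follow from your argument. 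Note also that this estimate would sum over $k$ to $\mathcal{O}(\lambda_\infty^W)$, so it cannot be the source of the factor $W$.

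The repair is the paper's Lemma~\ref{lem:X-diff} and Lemma~\ref{lemma:mat-approx-err}. In the telescoping, the $j$-th term is a prefix product $\mathcal{O}(\gamma_\infty^{j})$, times a position-dependent Riccati mismatch $\mathcal{O}(\rho_\infty^{W-j})$, times a suffix product $\mathcal{O}(\gamma_\infty^{k-j})$. Summing over $j\le k$ is geometric in $\rho_\infty$ and dominated by $j=k$, giving $\mathcal{O}(\gamma_\infty^{k}\rho_\infty^{W-k})$. This is $\mathcal{O}(\lambda_\infty^W)$ \emph{uniformly} in $k$: the decay of the closed-loop product is offset by the growing Riccati mismatch. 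Summing $W-1$ such terms is the true source of $W\lambda_\infty^W$. It also explains the improvement to $\rho_\infty^W$ when $\gamma_\infty<\rho_\infty$, since then $\sum_k(\gamma_\infty/\rho_\infty)^k=\mathcal{O}(1)$.

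The same oversight affects your justification of $\|A_{\mathrm{cl},t_1\to t_2}\|=\mathcal{O}(\gamma_\infty^{t_2-t_1})$ in step 2. That bound is used for piece (c) and for the bounded MPC feedforward in step 3. The condition $W\ge\Delta_{\mathrm{stab}}$ does not make the factors $A_{\mathrm{cl},j}$ with $j>T-\Delta_{\mathrm{stab}}$ close to $A_{\mathrm{cl},\infty}$. Yet these factors occur in $K_{t\to i}$ for $i$ near $T-1$ and in every MPC feedforward gain. The paper handles them in Lemma~\ref{lem:STM-K-decay}: there are at most $\Delta_{\mathrm{stab}}$ such uniformly bounded factors, and their contribution is absorbed into a constant independent of $T$ and $W$. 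The condition $W\ge\Delta_{\mathrm{stab}}$ is genuinely needed only in step 3, to control the unbounded powers of $A-BK_{T-W}$.
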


\vspace{-2mm}
To the best of our knowledge, Theorem~\ref{thm:regret-MPC} provides the first dynamic regret bound for online control of nonlinear systems that admit an exact Koopman embedding.
The result shows that the regret grows linearly with the full horizon $T$, but decays exponentially with the prediction horizon $W$.
In particular, a prediction horizon of length $W = \Theta(\log T)$ suffices to achieve constant $\mathcal{O}(1)$ regret, paralleling the results in \citep{yu2020power,zhang2021regret,lin2021perturbation} for linear systems.
Notably, this performance guarantee is obtained without requiring a terminal cost or knowledge of the dynamics, relying instead on a sufficiently long prediction horizon to ensure stability.
This feature is unique to Koopman-linearizable systems due to the lifting process. 

\vspace{-3mm}
\subsection{Dynamic regret analysis}
\vspace{-1mm}
We here provide a proof sketch of Theorem~\ref{thm:regret-MPC}, with full details deferred to Appendix~\ref{app:regret-proof}.
Our analysis builds on dynamic regret results for linear MPC with predictions but differs in two crucial aspects due to the Koopman lifting structure: (i) the lifted stage cost matrix $Q$ is only positive semidefinite, and (ii) our MPC formulation lacks a terminal cost.

The first key step uses Corollary~\ref{coro:reg-eq} to reduce the analysis to the lifted linear system:
\[
{\mathrm{Reg}}_T(\pi^{\textnormal{MPC}}) = {\mathrm{Reg}}_T^{\mathrm{lft}}(\pi^{\textnormal{MPC}}) =  {J}_T^{\mathrm{lft}}(\mathbf{u}^{\textnormal{MPC}};\mathbf{r}) - (J_T^{\mathrm{lft}})^*,
\]
where ${J}_T^{\mathrm{lft}}(\mathbf{u}^{\textnormal{MPC}};\mathbf{r})$ is the cost incurred by $u_{t}^{\textnormal{MPC}}$ from Algorithm~\ref{alg:MPC}.
By the cost difference lemma \citep{kakade2003sample}, the dynamic regret can be expressed as a cumulative sum of control deviations.

\vspace{-2mm}
\begin{lem}
\label{lemma:regret-formula}
    The lifted dynamic regret can be expressed as
    $
    \mathrm{Reg}_T^{\mathrm{lft}}(\pi^{\textnormal{MPC}}) = \textstyle\sum_{t=1}^{T} \|u_t^{\textnormal{MPC}} - u_t^* \|^2_{\Sigma_t},
    $
    where $u_t^{\textnormal{MPC}}=\pi_t^{\textnormal{MPC}}(x_t; \mathbf{r})$ and $u_t^*=\pi_t^*(x_t; \mathbf{r})$ denote the control actions generated by Algorithm~\ref{alg:MPC} and the optimal offline policy, respectively, both evaluated on the lifted \textit{MPC trajectory} $x_t$.
\end{lem}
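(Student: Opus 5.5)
We prove the identity with the cost difference lemma, applied to the finite-horizon lifted LQ tracking problem \eqref{eq:LQT}. The first step is to write the optimal cost-to-go explicitly. Following the derivation behind Theorem~\ref{thm:opt-offline-tracking}, for each $t\in[T]$ the optimal value function of \eqref{eq:LQT} from time $t$ onward is a quadratic
\[
V_t(x)=x^\tr P_t x+2q_t^\tr x+c_t ,
\]
where $P_t$ comes from the Riccati recursion in Definition~\ref{def:Riccati-recursion-LQT}. The vector $q_t$ and scalar $c_t$ depend only on $r_{t:T}$, and we set $V_{T+1}\equiv 0$. Define the $Q$-function
\[
Q_t(x,u):=\ell_{\mathrm{lft}}(x,u;r_t)+V_{t+1}(Ax+Bu).
\]
It is quadratic in $u$ with Hessian $R+B^\tr P_{t+1}B=\Sigma_t$, and its minimizer is $u=\pi_t^*(x;\mathbf r)$ by Theorem~\ref{thm:opt-offline-tracking}. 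Completing the square then gives the exact identity
\[
Q_t(x,u)-V_t(x)=\|u-\pi_t^*(x;\mathbf r)\|_{\Sigma_t}^2 \quad\text{for all } x,u .
\]
At $t=T$ we need $V_{T+1}=0$ together with the convention $\Sigma_T=R$. This is consistent with $P_{T+1}=0$ when $\Sigma_T=R+B^\tr P_{T+1}B$, and then $P_T=Q$ as in Definition~\ref{def:Riccati-recursion-LQT}.

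The second step is the telescoping. Let $x_t$ be the lifted MPC trajectory, with $x_1=\psi(z_1)$ and $x_{t+1}=Ax_t+Bu_t^{\textnormal{MPC}}$. Then
\[
J_T^{\mathrm{lft}}(\mathbf u^{\textnormal{MPC}};\mathbf r)-V_1(x_1)=\sum_{t=1}^T\big[\ell_{\mathrm{lft}}(x_t,u_t^{\textnormal{MPC}};r_t)+V_{t+1}(x_{t+1})-V_t(x_t)\big],
\]
because the intermediate $V$ terms cancel and $V_{T+1}=0$. Each bracket equals $Q_t(x_t,u_t^{\textnormal{MPC}})-V_t(x_t)$. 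Since $V_1(x_1)=(J_T^{\mathrm{lft}})^*$, the first step yields the claimed formula, with every term evaluated along the MPC trajectory. This argument works for any causal policy, not only $\pi^{\textnormal{MPC}}$.

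The only real obstacle is the first step. We must justify that the value function is quadratic with Hessian generated exactly by $P_{t+1}$, and that $\Sigma_t\succ0$ when $Q$ is only positive semidefinite. Positivity follows from $R\succ0$ and $P_{t+1}\succeq 0$, and $P_{t+1}\succeq0$ holds by backward induction because each $V_t$ is a minimum of nonnegative costs. The linear coefficients $q_t$ need not be computed explicitly. It suffices to verify by backward induction that $V_t$ stays quadratic with leading matrix $P_t$, that the minimizer of $Q_t$ is affine in $x$, and that this minimizer matches \eqref{eq:opt-LQT}. Uniqueness of the minimizer, which follows from $\Sigma_t\succ0$, gives that match.
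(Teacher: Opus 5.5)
Your proposal is correct and follows essentially the paper's route: the paper invokes the cost difference lemma to write the regret as $\sum_{t} Q_t^*(x_t,u_t^{\textnormal{MPC}};\mathbf r)-Q_t^*(x_t,u_t^*;\mathbf r)$ along the MPC trajectory, and then uses that $Q_t^*(x_t,\cdot\,;\mathbf r)$ is a strongly convex quadratic with curvature $\Sigma_t$ minimized at $u_t^*$. This is exactly your telescoping plus completing-the-square argument; you just prove the cost difference lemma inline and make the $t=T$ convention $\Sigma_T=R$ explicit. One small wording point: the Hessian of $Q_t$ in $u$ is $2\Sigma_t$, not $\Sigma_t$, but the identity you actually use, $Q_t(x,u)-V_t(x)=\|u-\pi_t^*(x;\mathbf r)\|_{\Sigma_t}^2$, is correct.
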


Let $w_t:=A \psi(r_t) - \psi(r_{t+1})$. From Theorems~\ref{thm:opt-offline-tracking} and \ref{thm:opt-MPC}, it is clear that the control deviation vanishes for $T-W < t \leq T$, and admits the following decomposition for $t\in[T-W]$:
\begin{align*}
    u_t^{\textnormal{MPC}} - u_t^* =& (K_t - \bar{K}_1) \left(x_t - \psi(r_t)\right) + \textstyle\sum_{i=t}^{t+W-2} (K_{t\to i} - \bar{K}_{1\to i-t+1}) w_i + \textstyle\sum_{i=t+W-1}^{T-1} K_{t\to i} w_i.
\end{align*}
With the above expression and Lemma~\ref{lemma:regret-formula}, we obtain the following regret bound decomposition
\begin{align}
    \mathrm{Reg}_T^{\mathrm{lft}}(\pi^{\textnormal{MPC}}) \leq & \underbrace{\left( \textstyle\sum_{t=1}^{T-W} \left\|\textstyle\sum_{i=t+W-1}^{T-1} K_{t\to i} w_i\right\|_{\Sigma_t}^2\right)}_{\text{truncation deviation}} + \underbrace{\left(\textstyle\sum_{t=1}^{T-W} \left\|(K_t - \bar{K}_1) \left(x_t - \psi(r_t)\right)\right\|_{\Sigma_t}^2\right)}_{\text{feedback deviation}}\nonumber \\
    &\quad + \underbrace{\left(\textstyle\sum_{t=1}^{T-W} \left\|\sum_{i=t}^{t+W-2} (K_{t\to i} - \bar{K}_{1\to i-t+1}) w_i \right\|_{\Sigma_t}^2\right)}_{\text{feedforward deviation}}.  \label{eq:bound-decomposition}
\end{align}

We now bound the regret term-by-term, with all technical details deferred to Appendices~\ref{app:LQ-property} and \ref{app:regret-proof}.
All the results below are stated with the same settings in Theorem~\ref{thm:regret-MPC}.
We first establish the exponential decay behavior of some key quantities. 

\begin{lem}[Exponential decay of state transition matrices and feedforward gains] \label{lem:STM-K-decay}
For all $t_1 < t_2 < T$, we have $\|A_{\mathrm{cl},t_1 \to t_2}\| = \mathcal{O}({\gamma}_\infty^{t_2 - t_1})$ and $\|K_{t_1 \to t_2}\| = \mathcal{O}({\gamma}_\infty^{t_2 - t_1})$.
\end{lem}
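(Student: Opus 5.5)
We reduce both claims to two standard facts about the finite-horizon Riccati recursion of Definition~\ref{def:Riccati-recursion-LQT} under Assumption~\ref{assump:LQ}. Fact (a) is uniform boundedness: $\sup_t\|P_t\| = \mathcal{O}(1)$. Fact (b) is exponential convergence: $\|P_t - P_\infty\| = \mathcal{O}(\rho_\infty^{T-t})$, as in \citep{hager1976convergence}.

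Stabilizability gives (a) directly. The cost-to-go from time $t$ is at most the cost of applying any stabilizing static gain, and that cost is bounded uniformly in the horizon length. Detectability of $(Q,A)$ is what makes (b) hold even though $Q$ is only positive semidefinite. Combining (a) and (b) with the formula $K_t=\Sigma_t^{-1}B^\tr P_{t+1}A$, and using $\Sigma_t\succeq R\succ 0$, we get $\|K_t-K_\infty\|=\mathcal{O}(\rho_\infty^{T-t})$. Hence $\|A_{\mathrm{cl},t}-A_{\mathrm{cl},\infty}\| = \mathcal{O}(\rho_\infty^{T-t})$.

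\textbf{State transition matrices.} Set $\gamma_\infty=\tfrac12(1+\rho(A_{\mathrm{cl},\infty}))$, which lies strictly between $\rho(A_{\mathrm{cl},\infty})$ and $1$. Using a Lyapunov-type similarity (or Gelfand's formula), choose a norm $\|\cdot\|_\star$, equivalent to $\|\cdot\|$ up to a constant $c$, with $\|A_{\mathrm{cl},\infty}\|_\star\le \tfrac12(\rho(A_{\mathrm{cl},\infty})+\gamma_\infty)<\gamma_\infty$. Let $\epsilon:=\tfrac12(1-\rho(A_{\mathrm{cl},\infty}))$ denote the resulting margin, so that $\|A_{\mathrm{cl},\infty}\|_\star\le \gamma_\infty-\epsilon/2$.

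There is an index $\tau=T-\mathcal{O}(\log \epsilon^{-1})$ such that $\|A_{\mathrm{cl},t}-A_{\mathrm{cl},\infty}\|_\star\le \epsilon/2$ for all $t\le\tau$. This index is exactly where $\Delta_{\mathrm{stab}}$ comes from. For these $t$ we have $\|A_{\mathrm{cl},t}\|_\star\le\gamma_\infty$.

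The remaining indices $t\in(\tau,T)$ are only $\mathcal{O}(\Delta_{\mathrm{stab}})$ in number, a count independent of $T$. For them we use the uniform bound $\|A_{\mathrm{cl},t}\|\le \|A\|+\|B\|\sup_t\|K_t\|=\mathcal{O}(1)$. Multiplying out the product $A_{\mathrm{cl},t_2}\cdots A_{\mathrm{cl},t_1+1}$, each factor with $t\le\tau$ contributes at most $\gamma_\infty$. The at most $\mathcal{O}(\Delta_{\mathrm{stab}})$ factors with $t>\tau$ contribute a constant, which is absorbed by dividing by $\gamma_\infty^{\Delta_{\mathrm{stab}}}$. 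This gives $\|A_{\mathrm{cl},t_1\to t_2}\|\le c\,M^{\Delta_{\mathrm{stab}}}\gamma_\infty^{-\Delta_{\mathrm{stab}}}\gamma_\infty^{t_2-t_1}=\mathcal{O}(\gamma_\infty^{t_2-t_1})$. The constant depends only on $(A,B,Q,R)$.

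\textbf{Feedforward gains.} From $K_{t_1\to t_2}=\Sigma_{t_1}^{-1}B^\tr A_{\mathrm{cl},t_1\to t_2}^\tr P_{t_2+1}$ we get $\|K_{t_1\to t_2}\|\le \|R^{-1}\|\,\|B\|\,\sup_t\|P_t\|\,\|A_{\mathrm{cl},t_1\to t_2}\|$. The previous bound then gives $\|K_{t_1\to t_2}\|=\mathcal{O}(\gamma_\infty^{t_2-t_1})$.

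\textbf{Main obstacle.} The hard step is justifying (a) and (b) when $Q$ is only positive semidefinite. This is where detectability enters, and it is the reason the naive inequality $A_{\mathrm{cl},t}^\tr P_{t+1}A_{\mathrm{cl},t}\preceq P_t-Q$ cannot serve as a contraction certificate. Our approach therefore avoids any Lyapunov argument based on $P_t$. We rely on the convergence result for the recursion itself and then use the perturbation-of-products argument above. That argument only needs $A_{\mathrm{cl},\infty}$ to be Schur, which holds under stabilizability and detectability.
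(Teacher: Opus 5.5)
Your proposal is correct and follows essentially the same route as the paper. Your adapted norm $\|\cdot\|_\star$ is the paper's strong-stability similarity $H_{\mathrm{cl},\infty}$ (Lemma~\ref{lem:bound-L}), your index $\tau$ is $T_{\mathrm{stab}}=T-\Delta_{\mathrm{stab}}$, and absorbing the at most $\Delta_{\mathrm{stab}}$ late factors into the constant is what the paper's three-case split does. You are slightly more careful than the paper in two places: you build in a strict margin $\|A_{\mathrm{cl},\infty}\|_\star<\gamma_\infty$ instead of asserting $\tilde{\gamma}_\infty=\rho(A_{\mathrm{cl},\infty})$, and you explicitly derive the feedforward-gain bound from $\sup_t\|P_t\|=\mathcal{O}(1)$ and $\Sigma_t\succeq R$, which the paper's appendix proof leaves implicit.
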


\vspace{-1mm}
The proof leverages the strong stability \citep{cohen2018online} of the closed-loop matrix $A_{\mathrm{cl},\infty}$ together with the exponential convergence of the Riccati recursion; see details in \cref{subsec:proof-lem3}. This result directly yields a bound on the truncation deviation term in the regret decomposition.
\begin{prop} \label{prop:truncation-err}
    The truncation deviation term in \eqref{eq:bound-decomposition} satisfies $(\text{truncation deviation})= \mathcal{O}({\gamma}_\infty^{2W}T).$
\end{prop}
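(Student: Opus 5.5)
The argument bounds each summand of the truncation deviation uniformly in $t$ and then multiplies by the number of summands.

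\textbf{Step 1: uniform bounds on $w_i$ and $\Sigma_t$.} By Assumption~\ref{assump:koopman} and $\|r_t\|\le 1$, we have $\|\psi(r_t)\|\le D_\psi$ for every $t$. Hence $\|w_i\| = \|A\psi(r_i)-\psi(r_{i+1})\| \le (\|A\|+1)D_\psi =: \bar w$, a constant independent of $T$ and $W$.

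Next, since $\Sigma_t = R + B^\tr P_{t+1}B$, it suffices to bound $\|P_t\|$ uniformly in $t$ and $T$. Under Assumption~\ref{assump:LQ}, the Riccati recursion converges exponentially to $P_\infty$ at rate $\rho_\infty$. So the finite-horizon iterates satisfy $\|P_t\|\le \|P_\infty\| + \mathcal{O}(1)$. Alternatively, one can bound $x^\tr P_t x$ by the finite-horizon cost of any stabilizing feedback, which is uniformly bounded. Either way, $\|\Sigma_t\| \le \bar\sigma$ for a constant $\bar\sigma$ depending only on $(A,B,Q,R)$.

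\textbf{Step 2: bound one summand.} Fix $t\in[T-W]$. Using Lemma~\ref{lem:STM-K-decay}, $\|K_{t\to i}\|\le c\,\gamma_\infty^{i-t}$ for a constant $c$. Then
\[
\Big\|\textstyle\sum_{i=t+W-1}^{T-1} K_{t\to i} w_i\Big\|_{\Sigma_t}^2 \le \bar\sigma\Big(\textstyle\sum_{i=t+W-1}^{T-1} c\,\gamma_\infty^{i-t}\bar w\Big)^2 \le \bar\sigma c^2\bar w^2 \Big(\frac{\gamma_\infty^{W-1}}{1-\gamma_\infty}\Big)^2 .
\]
The geometric tail sum is taken over $i-t\ge W-1$ and uses $\gamma_\infty\in(0,1)$. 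The right-hand side is $\mathcal{O}(\gamma_\infty^{2W})$, because $\gamma_\infty^{-2}$ and $(1-\gamma_\infty)^{-2}$ depend only on $(A,B,Q,R)$ and not on $W$ or $T$.

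\textbf{Step 3: sum over $t$.} Summing this bound over the at most $T$ indices $t\in[T-W]$ gives $\mathcal{O}(\gamma_\infty^{2W}T)$, which is the claim.

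\textbf{Main obstacle.} The only delicate point is Step 1: the uniform bound on $\|\Sigma_t\|$ must be independent of $T$ even though $Q$ is only positive semidefinite. This is where detectability and stabilizability are used, through convergence of the Riccati recursion. If a cleaner route is needed, I would use the comparison argument: $P_t \preceq$ the cost matrix of the fixed stabilizing gain $K_\infty$ over the remaining horizon, and that matrix is bounded by $\sum_k (A_{\mathrm{cl},\infty}^k)^\tr (Q+K_\infty^\tr R K_\infty) A_{\mathrm{cl},\infty}^k$, which is finite.
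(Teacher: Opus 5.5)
Your proof is correct and is essentially the paper's argument. You bound $\|w_i\|\le(\|A\|+1)D_\psi$ and bound $\|\Sigma_t\|$ uniformly; the paper uses $\|\Sigma_t\|\le\|R\|+\|B\|^2\|P_\infty\|$, i.e.\ $P_{t+1}\preceq P_\infty$, which is exactly what your comparison with the gain $K_\infty$ gives. You then apply Lemma~\ref{lem:STM-K-decay} to get a geometric tail of order $\gamma_\infty^{W}$, square it, and sum over at most $T$ values of $t$.
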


Next, to handle the remaining two terms in the regret decomposition, we establish exponential decay bounds for the gain deviations and ensure boundedness of the lifted MPC state trajectory.
\begin{lem}[Exponential decay of feedback and feedforward deviation] \label{lemma:mat-approx-err}
For $t\in[T-W]$ and $t\leq i\leq t\!+\!W\!-\!2$, we have $\|K_t - \bar{K}_1\| = \mathcal{O}(\rho_\infty^W)$ and $\|K_{t \to i} - \bar{K}_{1 \to i-t+1}\| = \mathcal{O}(\gamma_\infty^{i-t} \rho_\infty^{W - i + t})$.
\end{lem}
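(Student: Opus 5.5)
The plan is to reduce both bounds to two ingredients: exponential convergence of the Riccati recursion to $P_\infty$, and the uniform closed-loop decay from Lemma~\ref{lem:STM-K-decay}.

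\emph{Riccati convergence.} Under Assumption~\ref{assump:LQ}, the backward recursion in Definition~\ref{def:Riccati-recursion-LQT} started from $P_T=Q$ satisfies $\|P_t-P_\infty\| = \mathcal{O}(\rho_\infty^{T-t})$ \citep{hager1976convergence}. The maps $P\mapsto \Sigma(P)^{-1}B^\tr P A$ and $P\mapsto \Sigma(P)^{-1}$ are locally Lipschitz, with $\Sigma\succeq R\succ0$, and all $P_t$ lie in a bounded set. Hence
\[
\|K_t-K_\infty\| = \mathcal{O}(\rho_\infty^{T-t}), \qquad \|\Sigma_t^{-1}-\Sigma_\infty^{-1}\| = \mathcal{O}(\rho_\infty^{T-t}).
\]

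\emph{Reduction of the MPC gains.} The MPC gains are Riccati quantities indexed relative to the terminal time $T$: $\bar K_1 = K_{T-W}$ and $\bar K_{1\to k} = K_{T-W\to T-W+k}$. For $t\in[T-W]$, both $K_t$ and $\bar K_1 = K_{T-W}$ lie within $\mathcal{O}(\rho_\infty^{W})$ of $K_\infty$, since $T-t\ge W$. The triangle inequality then gives $\|K_t-\bar K_1\| = \mathcal{O}(\rho_\infty^W)$, which is the first claim.

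\emph{Feedforward deviation.} Set $k=i-t$, and write
\begin{align*}
K_{t\to i} &= \Sigma_t^{-1}B^\tr A_{\mathrm{cl},t+1}^\tr\cdots A_{\mathrm{cl},i}^\tr P_{i+1},\\
\bar K_{1\to k+1} &= \Sigma_{s}^{-1}B^\tr A_{\mathrm{cl},s+1}^\tr\cdots A_{\mathrm{cl},s+k}^\tr P_{s+k+1}, \qquad s=T-W.
\end{align*}
Both are products of $k+2$ factors with the same structure. Telescope the difference, replacing one factor at a time, which yields at most $k+2$ terms.
\begin{itemize}
\item Each term in which a single factor differs, with index $j=t+m$ versus $s+m$, contributes its factor difference. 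That difference is bounded by $\|X_{t+m}-X_\infty\|+\|X_{s+m}-X_\infty\|$, where $X$ is $A_{\mathrm{cl}}$, $P$ or $\Sigma^{-1}$.
\item The relevant distance to the terminal time is at least $T-(s+k+1)\ge W-k-1$, so each factor difference is $\mathcal{O}(\rho_\infty^{W-k})$, with constants absorbed.
\item The remaining products of closed-loop matrices are bounded by $\mathcal{O}(\gamma_\infty^{k})$ via Lemma~\ref{lem:STM-K-decay}; the argument behind that lemma applies to mixed products of the two sequences, as long as every factor is close to $A_{\mathrm{cl},\infty}$.
\end{itemize}
Summing the terms gives $\mathcal{O}(k\,\gamma_\infty^{k-1}\rho_\infty^{W-k})$.

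\emph{Main obstacle: removing the factor of $k$.} The telescoping produces a polynomial factor $k$ that the statement omits. I would try to absorb it. First, the closed-loop deviations themselves decay, roughly as $\rho_\infty^{W-m}$ at the $m$th factor, so the sum over the telescoped terms is geometric in $m$ rather than uniform; summing it removes the factor $k$ and gives $\mathcal{O}(\gamma_\infty^{k}\rho_\infty^{W-k})$. If that bookkeeping fails, I would enlarge the constant by using a slightly larger rate $\gamma'\in(\gamma_\infty,1)$. This explains why the theorem's final bound carries the $W^2$ factor.

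The step I expect to require the most care is the uniform bound on mixed products of closed-loop matrices drawn from two different time-indexed sequences. I would handle it using strong stability of $A_{\mathrm{cl},\infty}$ with respect to the norm induced by a Lyapunov matrix: once both time indices are at distance at least $\Delta_{\mathrm{stab}}$ from $T$, every factor has norm at most $\gamma_\infty$ in that norm. The $W\ge\Delta_{\mathrm{stab}}$ assumption guarantees this over the stable portion of the window. The finitely many factors near the terminal end are bounded by a constant independent of $T$ and $W$, which is absorbed into the $\mathcal{O}(\cdot)$.
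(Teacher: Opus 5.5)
Your proposal is correct and is essentially the paper's argument. The paper also telescopes the product defining $K_{t\to i}$ (via the recursion $X_{t\to i}=A_{\mathrm{cl},t+1}^\tr X_{t+1\to i}$, plus a separate $\Sigma_t^{-1}-\bar\Sigma_1^{-1}$ term), bounds each single-factor difference by Riccati convergence and the remaining consecutive prefix/suffix products by Lemma~\ref{lem:STM-K-decay}, and removes the factor $k$ by the same geometric sum over positions that you describe. So your fallback to a larger rate $\gamma'$ is unnecessary (it would weaken the statement), and the $W^2$ in Theorem~\ref{thm:regret-MPC} comes from bounding $\sum_{j=0}^{W-2}\gamma_\infty^j\rho_\infty^{W-j}$ in Proposition~\ref{prop:gain-deviation}, not from this lemma.
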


The proof of bounding gain deviations is more delicate and involved, but builds on similar techniques used in the proof of Lemma \ref{lem:STM-K-decay}; see Appendices~\ref{subsec:lem-for-lem4} and \ref{subsec:proof-lem4} for details.

The bound below ensures the lifted MPC state remains bounded; see Section \ref{subsec:proof-lem5} for proof.
\begin{lem}[Uniform bound on lifted MPC states] \label{lem:bound-MPC-state}
    Suppose $W \geq \Delta_{\mathrm{stab}}$. There exists a constant \( D_x > 0 \), independent of \( T \), such that for all $t\in[T-W]$, we have $\|x_t - \psi(r_t)\| \leq D_x.$
\end{lem}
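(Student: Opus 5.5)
Define the tracking error $e_t := x_t - \psi(r_t)$. I will derive a time-invariant recursion for $e_t$ under the MPC policy \eqref{eq:MPC-LQT}. By the lifted dynamics and Theorem~\ref{thm:opt-MPC}(i), for $t\in[T-W]$,
\[
e_{t+1} = A x_t + B u_t^{\textnormal{MPC}} - \psi(r_{t+1}) = (A - B\bar K_1) e_t + w_t - B\textstyle\sum_{i=t}^{t+W-2}\bar K_{1\to i-t+1} w_i,
\]
where $w_t = A\psi(r_t)-\psi(r_{t+1})$. The MPC closed loop is therefore driven by the fixed matrix $\bar A := A - B\bar K_1 = A_{\mathrm{cl},T-W}$ plus a forcing term. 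By Assumption~\ref{assump:koopman}, $\|w_i\|\le (\|A\|+1)D_\psi$. By Lemma~\ref{lem:STM-K-decay}, $\|\bar K_{1\to k}\| = \mathcal{O}(\gamma_\infty^{k-1})$, so the forcing is bounded by a constant $D_w$ that is uniform in $T$ and $W$, since the geometric series sums to at most $1/(1-\gamma_\infty)$. The initial error $e_1 = \psi(z_1)-\psi(r_1)$ is fixed and independent of $T$.

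The main step is to show that $\bar A$ is uniformly stable once $W\ge\Delta_{\mathrm{stab}}$. By the exponential convergence of the Riccati recursion (rate $\rho_\infty$), $\|K_{T-W}-K_\infty\| = \mathcal{O}(\rho_\infty^{W})$, which follows as in Lemma~\ref{lemma:mat-approx-err}. Hence $\|\bar A - A_{\mathrm{cl},\infty}\| \le \|B\|\,\mathcal{O}(\rho_\infty^W)$. I will use the strong-stability representation of $A_{\mathrm{cl},\infty}$ already used for Lemma~\ref{lem:STM-K-decay}: there is $H$ with $\|H\|\|H^{-1}\|\le \kappa$ and $\|H^{-1}A_{\mathrm{cl},\infty}H\|\le \tfrac{1}{2}(1+\rho(A_{\mathrm{cl},\infty}))=\gamma_\infty$. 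Alternatively, take a Lyapunov matrix for $A_{\mathrm{cl},\infty}$ with contraction factor $\gamma_\infty$. Then
\[
\|H^{-1}\bar A H\| \le \gamma_\infty + \kappa \|B\|\,\mathcal{O}(\rho_\infty^W).
\]
Choosing $W$ large enough that the perturbation term is at most $\tfrac14(1-\rho(A_{\mathrm{cl},\infty}))$ gives $\|H^{-1}\bar A H\|\le \bar\gamma := \tfrac14(3+\rho(A_{\mathrm{cl},\infty}))<1$. This is exactly the kind of threshold encoded by $\Delta_{\mathrm{stab}}=\mathcal{O}(\log(1-\rho(A_{\mathrm{cl},\infty}))^{-1})$. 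I expect this to be the main obstacle. The constant in $\Delta_{\mathrm{stab}}$ must absorb $\kappa$, $\|B\|$, and the Riccati-convergence constant, and the analysis has to work with the merely detectable $Q\succeq 0$, for which convergence of $P_t$ to $P_\infty$ holds by \citep{hager1976convergence} under Assumption~\ref{assump:LQ}.

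With stability in hand, I unroll the recursion:
\[
e_t = \bar A^{t-1} e_1 + \textstyle\sum_{s=1}^{t-1}\bar A^{t-1-s} d_s, \qquad \|d_s\|\le D_w .
\]
This gives
\[
\|e_t\| \le \kappa \bar\gamma^{t-1}\|e_1\| + \kappa D_w/(1-\bar\gamma) =: D_x,
\]
which is independent of $T$ and of $W$ for all $t\in[T-W]$. That completes the proof.
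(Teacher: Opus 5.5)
Your proof is correct and follows essentially the same route as the paper. Both arguments use the error recursion $e_{t+1}=\bar A e_t + w_t - Bq_t$, a uniform bound on the feedforward term via Lemma~\ref{lem:STM-K-decay}, and stability of $\bar A = A_{\mathrm{cl},T-W}$ from strong stability of $A_{\mathrm{cl},\infty}$ plus Riccati convergence (the paper's Lemmas~\ref{lem:bound-L} and~\ref{lem:MPC-stability}), followed by unrolling. Your version is in fact slightly more careful than the paper's: you keep the initial-condition term $\bar A^{t-1}e_1$, which the paper's unrolled formula drops, and you avoid claiming a contraction factor of exactly $\rho(A_{\mathrm{cl},\infty})$. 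Just set $D_x := \kappa\|e_1\| + \kappa D_w/(1-\bar\gamma)$ so that it does not depend on $t$.
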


The key idea in Lemma \ref{lem:bound-MPC-state} is to leverage a sufficiently long prediction horizon. This contrasts with standard MPC, which often relies on a designed terminal cost to guarantee stability \citep{yu2020power,zhang2021regret}. 
Relying on Lemmas \ref{lemma:mat-approx-err} and \ref{lem:bound-MPC-state}, we obtain the following bounds.
\begin{prop} \label{prop:gain-deviation}
    Suppose $W \geq \Delta_{\mathrm{stab}}$. In \eqref{eq:bound-decomposition}, we have
    $\text{(feedback gain deviation)} = \mathcal{O}(\rho_\infty^{2W} T)$ and $\text{(feedforward gain deviation)} = \mathcal{O}(W^2 \lambda_\infty^{2W} T)$.
\end{prop}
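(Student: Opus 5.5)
We bound the two terms in \eqref{eq:bound-decomposition} separately. Lemma~\ref{lemma:mat-approx-err} supplies the gain deviations, and Lemma~\ref{lem:bound-MPC-state} together with Assumption~\ref{assump:koopman} supplies uniform bounds on the signals they multiply. Two preliminary facts are needed.

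\textbf{Uniform bound on the weights.} We use $\|\Sigma_t\|\le \|R\|+\|B\|^2\sup_t\|P_{t+1}\|$. The Riccati iterates $P_t$ are uniformly bounded independently of $T$, since $P_t\preceq P_\infty$ when started from $P_T=Q\preceq P_\infty$. Alternatively, this follows from the exponential convergence to $P_\infty$ under Assumption~\ref{assump:LQ}. Either way, $\|x\|_{\Sigma_t}^2\le \sigma\|x\|^2$ for a constant $\sigma$ independent of $T$ and $W$.

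\textbf{Uniform bound on the disturbances.} By Assumption~\ref{assump:koopman} and $\|r_t\|\le1$,
\[
\|w_i\|\le \|A\|\,\|\psi(r_i)\|+\|\psi(r_{i+1})\|\le (\|A\|+1)D_\psi=:D_w .
\]

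\textbf{Feedback term.} For each $t\in[T-W]$ we have
\[
\|(K_t-\bar K_1)(x_t-\psi(r_t))\|_{\Sigma_t}^2\le \sigma\,\|K_t-\bar K_1\|^2 D_x^2 .
\]
Here $D_x$ comes from Lemma~\ref{lem:bound-MPC-state}, which requires $W\ge\Delta_{\mathrm{stab}}$. Lemma~\ref{lemma:mat-approx-err} gives $\|K_t-\bar K_1\|=\mathcal{O}(\rho_\infty^W)$. Each summand is therefore $\mathcal{O}(\rho_\infty^{2W})$, and summing over at most $T$ indices gives $\mathcal{O}(\rho_\infty^{2W}T)$.

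\textbf{Feedforward term.} Fix $t$ and set $k=i-t\in\{0,\dots,W-2\}$. By the triangle inequality and Lemma~\ref{lemma:mat-approx-err},
\[
\Big\|\textstyle\sum_{i=t}^{t+W-2}(K_{t\to i}-\bar K_{1\to i-t+1})w_i\Big\|\le D_w\textstyle\sum_{k=0}^{W-2} c\,\gamma_\infty^{k}\rho_\infty^{W-k}.
\]
Each product satisfies $\gamma_\infty^k\rho_\infty^{W-k}\le\lambda_\infty^{W}$, because both bases are at most $\lambda_\infty$. The sum has at most $W-1$ terms, so it is $\mathcal{O}(W\lambda_\infty^W)$. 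Squaring, weighting by $\sigma$, and summing over $t\le T$ gives $\mathcal{O}(W^2\lambda_\infty^{2W}T)$.

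All hidden constants depend only on $(A,B,Q,R,D_\psi)$, through $\sigma$, $D_w$, $D_x$ and the constants of Lemma~\ref{lemma:mat-approx-err}. None depends on $T$ or $W$.

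\textbf{Main obstacle.} The only delicate point is confirming that neither $\sup_t\|\Sigma_t\|$ nor $D_x$ grows with $W$ or $T$. For $\Sigma_t$ we rely on monotonicity and boundedness of the Riccati iterates from $P_T=Q$ under stabilizability and detectability. For $D_x$ we take Lemma~\ref{lem:bound-MPC-state} as stated, with its constant independent of $W$ once $W\ge\Delta_{\mathrm{stab}}$.
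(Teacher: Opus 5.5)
Your proof is correct and follows essentially the same route as the paper. Both bound $\sup_t\|\Sigma_t\|$ uniformly and pull out $D_x$ from Lemma~\ref{lem:bound-MPC-state} (where $W\ge\Delta_{\mathrm{stab}}$ enters) and $D_w$. Both then apply the gain-deviation bounds of Lemma~\ref{lemma:mat-approx-err}, using $\gamma_\infty^{k}\rho_\infty^{W-k}\le\lambda_\infty^{W}$ over at most $W-1$ terms to get the $W^2$ factor. Your explicit justification of $\|\Sigma_t\|\le\|R\|+\|B\|^2\|P_\infty\|$, via $Q\preceq P_\infty$ and monotonicity of the Riccati map, is a detail the paper uses without comment.
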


It is now clear that combining Propositions~\ref{prop:truncation-err} and \ref{prop:gain-deviation} yields the desired bound in Theorem~\ref{thm:regret-MPC}. 

\vspace{-3mm}
\section{Numerical Experiments} \label{sec:experiment}

\vspace{-1mm}
\begin{figure}[t]
    \centering
    \includegraphics[width=0.31\textwidth]{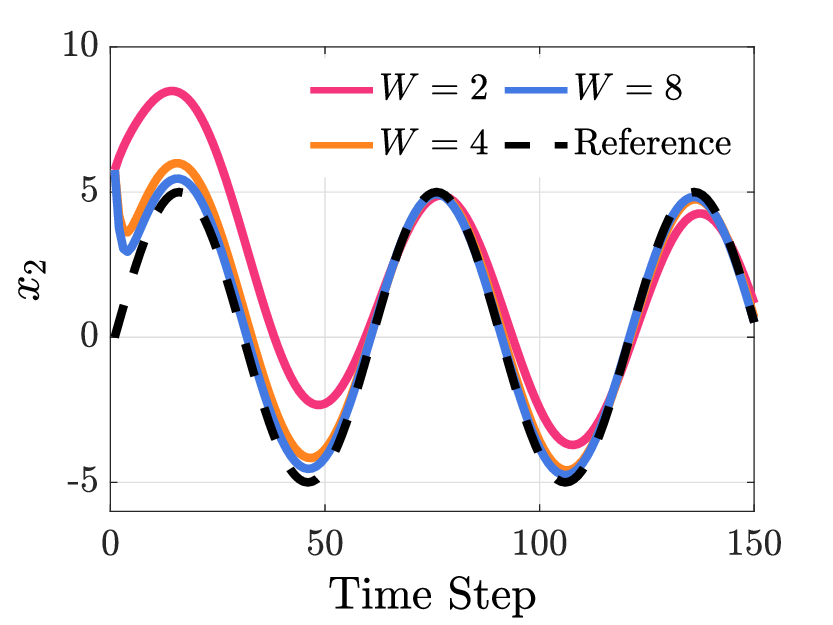} \label{Fig:Lin_track} \hspace{1.5mm}
    \includegraphics[width=0.31\textwidth]{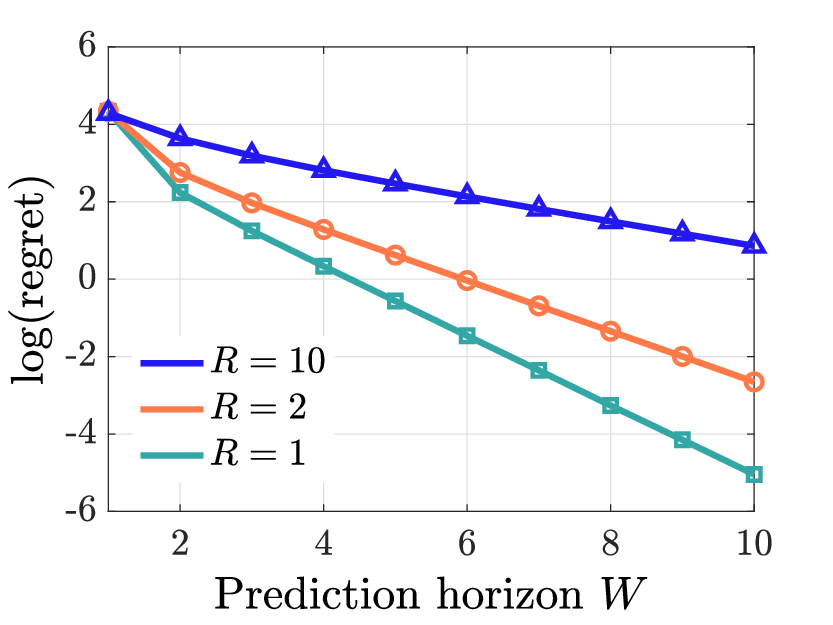} \label{Fig:Lin_regret} \hspace{1.5mm}
    \includegraphics[width=0.31\textwidth]{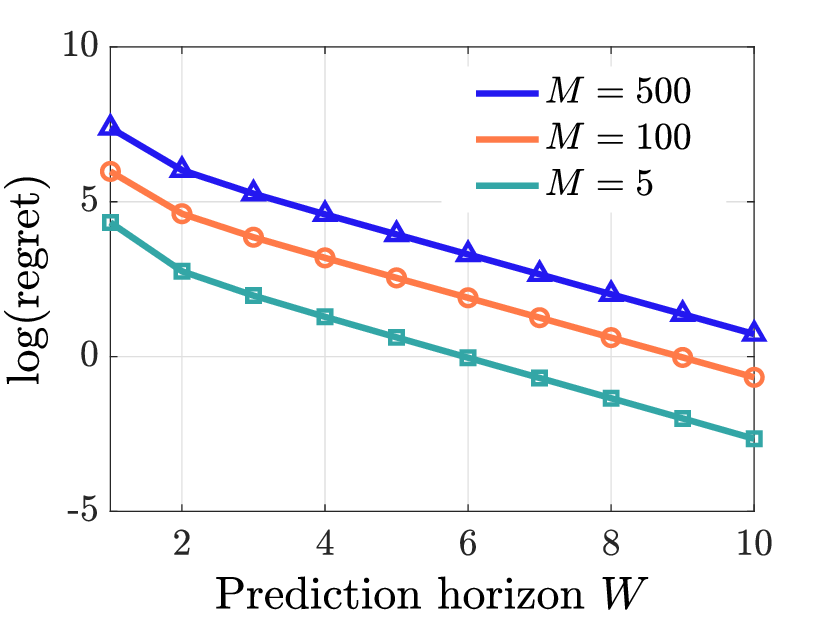} \label{subfig:regret_ref}
    \vspace{-2mm}
    \caption{Performance of Algorithm~\ref{alg:MPC} with \eqref{eq:DDPC} under varying prediction horizon $W$: tracking and reference trajectories (left), dynamic regret for different $R$ (middle) and different $M$ (right).}
    \vspace{-2mm}
    \label{fig:regret}
\end{figure}

\begin{wrapfigure}[3]{r}{.48\textwidth}
\vspace{-6.5mm}
\begin{equation}
\label{eq:slow-manifold-exp}
\begingroup
    \setlength\arraycolsep{3pt}
\def\arraystretch{0.85} 
\begin{bmatrix}
    z_{1,t+1} \\
    z_{2,t+1}
    \end{bmatrix}\!=\!
    \begin{bmatrix}
    0.99z_{1,t} \\
    0.9z_{2,t}\! +\! z_{1,t}^2\! +\! z_{1, t}^3\! +\! z_{1, t}^4\! +\! u_t
    \end{bmatrix}
\endgroup
\end{equation}
\end{wrapfigure}
We validate Theorem~\ref{thm:regret-MPC} by evaluating the tracking performance of \eqref{eq:DDPC} on the nonlinear system \eqref{eq:slow-manifold-exp}, which similar to Example~\ref{exmp:slow-manifold} is Koopman-linearizable with the lifted state $x:=[
    z_1, z_2, z_1^2, z_1^3, z_1^4]^\tr$.
Here, the control objective is to let the second state $z_2$ track a sinusoidal reference $r_{2,t}=M \sin(\pi t/30)$.
In the experiment, we set the full horizon $T=200$, state cost matrix $Q_z=\mathrm{diag}(0, 1)$, and initial trajectory length $T_\ini=10$.
For construction of the data library $H_\D$, a single trajectory of length $2W+24$ is collected for each prediction horizon $W$ (the minimum length to satisfy the lifted excitation), where the control input is uniformly sampled from $[-1,1]$.

We show in Figure~\ref{fig:regret} the tracking performance and dynamic regret. In the left subfigure, we observe that a longer prediction horizon $W$ yields smaller tracking error and faster convergence to the reference trajectory, as indicated by the quicker correction in the blue curve. 
The middle and right subfigures show that the dynamic regret decreases exponentially with increasing prediction horizon $W$, which aligns with our bound in Theorem~\ref{thm:regret-MPC}.
Moreover, we observe that the decay rate depends on the control cost matrix $R$ and reference magnitude $M$.

Finally, to illustrate how the proposed data-driven predictive control method extends to systems that are not Koopman-linearizable, we conduct an additional experiment on path tracking of two-wheeled robots. Due to the page limit, we put these additional experimental results in Appendix~\ref{app:exp-details}.

\vspace{-4mm}
\section{Conclusion} \label{sec:conclusion}
\vspace{-1mm}
This paper studied the online performance of a data-driven predictive tracking method for a class of Koopman-linearizable nonlinear systems.
The approach relied only on sufficiently rich offline data and required no explicit model identification or knowledge of Koopman lifting functions.
Leveraging the lifted representation, we established a dynamic regret bound that quantifies the tracking performance relative to the optimal offline control sequence and reveals the impact of the prediction horizon on the dynamic regret.
Numerical experiments supported our theoretical findings.
Future directions include extending the data-driven method and regret analysis to settings with disturbances or approximate Koopman embeddings, relaxing data richness conditions, incorporating imperfect predictions, and exploring alternative regret measures.

\vspace{-3mm}

\acks{This work is supported by NSF CMMI Award 2320697 and NSF CAREER Award 2340713.}

\bibliography{mylib}


\newpage
\appendix

\section{The Optimal Noncausal Policy for Linear Quadratic Tracking (LQT)} \label{app:proof-opt-offline-policy}

This section derives the optimal offline (noncausal) policy given in Theorem~\ref{thm:opt-offline-tracking} and characterizes its value function (cost-to-go). Recall $\ell_{\mathrm{lft}}(x_t,u_t;r_t):=\|x_t - \psi(r_t)\|_{Q}^2 + \|u_t\|_R^2$ for $t\in[T]$ and the LQT problem \eqref{eq:LQT} given full knowledge of the target trajectory $\mathbf{r}=r_{1:T}$
\begin{align*}
    \min_{u_{1:T}}& \quad \textstyle\sum_{t=1}^{T} \ell_{\mathrm{lft}}(x_t,u_t;r_t) \\
    \mathrm{s.t.} & \quad x_{t+1}=Ax_t+Bu_t, \ \ x_1 \text{ given}.
\end{align*}

To formalize the optimal control policy under reference tracking, we introduce notions of the optimal value function \( V_t^* \), Q-function \( Q_t^* \), and optimal policy \( \pi_t^* \), defined as follows.

\begin{defn}[LQT optimal value function, Q-function, policy] \label{def:optimal-value-Q}
    Define the terminal quantities:
    \[
    Q_{T}^*(x,u;\mathbf{r})=\ell_{\mathrm{lft}}(x,u;r_T), \quad \pi_T^*(x;\mathbf{r})=\min_u Q_T^*(x,u;\mathbf{r})=0, \quad V_T^*(x;\mathbf{r})=\ell_{\mathrm{lft}}(x,0;r_T).
    \]
    For each $t\in[T-1]$, define recursively:
    \begin{align*}
    {Q}_t^*(x, u; \mathbf{r}) 
    &= \ell_{\mathrm{lft}}(x,u;r_t) + V_{t+1}^*(Ax + Bu; \mathbf{r}), \\
    \pi_t^*(x; \mathbf{r}) 
    &= \argmin_{u} {Q}_t^*(x, u; \mathbf{r}), \\
    V_t^*(x; \mathbf{r}) 
    &= \min_{u } {Q}_t^*(x, u; \mathbf{r}) 
    = {Q}_t^*(x, \pi_t^*(x; \mathbf{r}); \mathbf{r}).
    \end{align*}
\end{defn}

\subsection{Equivalence between LQT and LQR with disturbance}

We first establish the equivalence between LQT and linear quadratic regulator (LQR) with disturbances, which serves as a key step in the subsequent proof. By a change of variables, we define the lifted tracking error state $\tilde{x}_t$ and the disturbance sequence $w_t$ as follows 
\begin{equation} \label{eq:tilde-x-and-w}
    \tilde{x}_t:=x_t-\psi(r_t), \quad w_t:=A\psi(r_t)-\psi(r_{t+1}),
\end{equation}
we can readily verify that the LQT problem \eqref{eq:LQT} is equivalent to LQR with disturbance $\mathbf{w}=w_{1:T\!-\!1}$
\begin{equation} \label{eq:LQR-with-disturbance}
    \begin{aligned}
        \min_{u_{1:T}}& \quad \textstyle\sum_{t=1}^{T} \tilde{x}_t^\tr {Q} \tilde{x}_t + u_t^\tr R u_t \\
        \mathrm{s.t.} & \quad \tilde{x}_{t+1}=A\tilde{x}_t+Bu_t+w_t,\ \ \tilde{x}_1 \text{ given}.
    \end{aligned}
\end{equation}

To formalize the optimal control policy under this equivalent LQR setting, we define the corresponding value functions and policies in terms of $\tilde{x}_t$ and $w_t$, mirroring the structure in Definition~\ref{def:optimal-value-Q}. Under this change of variables \eqref{eq:tilde-x-and-w}, we note that $\ell_{\mathrm{lft}}({x}, u; r_t)=\ell_{\mathrm{lft}}(\tilde{x}, u; 0)=\tilde{x}^\tr {Q} \tilde{x} + u^\tr R u$.

\begin{defn}[LQR optimal value function, Q-function, policy]
Define the terminal quantities:
\[
    \tilde{Q}_T^*(\tilde{x}, u; \mathbf{w}) := \ell_{\mathrm{lft}}(\tilde{x}, u; 0), \quad \tilde{\pi}_T^*(\tilde{x};\mathbf{w})=\min_u \tilde{Q}_T^*(\tilde{x},u;\mathbf{w})=0
    \quad
    \tilde{V}_T^*(\tilde{x}; \mathbf{w}) := \ell_{\mathrm{lft}}(\tilde{x}, 0; 0).
\]
For each \( t \in [T - 1] \), define recursively:
\begin{align*}
    \tilde{Q}_t^*(\tilde{x}, u; \mathbf{w}) 
    &:= \ell_{\mathrm{lft}}(\tilde{x}, u; 0) +  \tilde{V}_{t+1}^*(A \tilde{x} + B u + w_t; \mathbf{w}), \\
    \tilde{\pi}_t^*(\tilde{x}; \mathbf{w}) 
    &:= \argmin_u  \tilde{Q}_t^*(\tilde{x}, u; \mathbf{w}), \\
    \tilde{V}_t^*(\tilde{x}; \mathbf{w}) 
    &:= \min_{u }  \tilde{Q}_t^*(\tilde{x}, u; \mathbf{w}) =  \tilde{Q}_t^*(\tilde{x},  \tilde{\pi}_t^*(\tilde{x}; \mathbf{w}); \mathbf{w}).
\end{align*}
\end{defn}

\subsection{Proof of Theorem~\ref{thm:opt-offline-tracking}}

We restate Theorem~\ref{thm:opt-offline-tracking} in terms of the lifted error state $\tilde{x}$ and the disturbance sequence $\mathbf{w}$ as follows, and provide its proof.
\begin{thm} \label{thm:opt-LQR-policy}
The optimal (noncausal) policy for \eqref{eq:LQR-with-disturbance}, given full knowledge of \( \mathbf{w} \), is given by
\[
    u_t^* = \pi_t^*(\tilde{x}_t; \mathbf{w}) = -K_t \tilde{x}_t - \textstyle\sum_{i=t}^{T-1} K_{t \to i} w_i,
\]
where the feedback gain \( K_t \) and the feedforward gain $K_{t \to i} = \Sigma_t^{-1} B^\tr (A_{\mathrm{cl}, i} \cdots A_{\mathrm{cl}, t+1})^\tr P_{i+1}$
are defined via the Riccati recursion in Definition~\ref{def:Riccati-recursion-LQT}.
Moreover, the corresponding optimal value function (cost-to-go) at time $t$ is given by
$
    \tilde{V}_t^*(\tilde{x};\mathbf{w})=\tilde{x}^\tr P_t\tilde{x}+v_t^\tr \tilde{x}+q_t,
$
where the recursive relation of $(P_t,v_t,q_t)$ is given in \eqref{eq:value-fun}. 
Equivalently, in terms of the original lifted state $x$ and $\mathbf{r}$, the value function admits the form
$
    V_t^*({x};\mathbf{r})=\left(x-\psi(r_t)\right)^\tr P_t\left(x-\psi(r_t)\right)+v_t^\tr \left(x-\psi(r_t)\right) + q_t.
$
\end{thm}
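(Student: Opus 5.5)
We argue by backward induction on $t$ (dynamic programming), with the induction hypothesis that $\tilde V_{t+1}^*(\tilde x;\mathbf w)=\tilde x^\tr P_{t+1}\tilde x+v_{t+1}^\tr\tilde x+q_{t+1}$. The base case $t=T$ is immediate: $\tilde V_T^*(\tilde x)=\tilde x^\tr Q\tilde x$, so $P_T=Q$, $v_T=0$, $q_T=0$, and $\tilde\pi_T^*=0$. This matches the stated formula, since the sum $\sum_{i=T}^{T-1}$ is empty.

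For the inductive step, substitute the hypothesis into $\tilde Q_t^*(\tilde x,u;\mathbf w)=\tilde x^\tr Q\tilde x+u^\tr Ru+\tilde V_{t+1}^*(A\tilde x+Bu+w_t)$. The result is a quadratic in $u$ with Hessian $2\Sigma_t=2(R+B^\tr P_{t+1}B)\succ0$; this uses $R\succ0$ and $P_{t+1}\succeq0$, where the latter holds inductively because each $P_t$ is a minimized cost of a nonnegative quadratic. The first-order condition gives
\[
u^*=-\Sigma_t^{-1}B^\tr P_{t+1}A\tilde x-\Sigma_t^{-1}B^\tr\bigl(P_{t+1}w_t+\tfrac12 v_{t+1}\bigr)=-K_t\tilde x-\Sigma_t^{-1}B^\tr\bigl(P_{t+1}w_t+\tfrac12 v_{t+1}\bigr).
\]
Plugging $u^*$ back in and matching coefficients then yields the recursions we record as \eqref{eq:value-fun}:
\begin{itemize}
\item the quadratic coefficient is exactly the Riccati update of Definition~\ref{def:Riccati-recursion-LQT};
\item the linear coefficient is $v_t=A_{\mathrm{cl},t}^\tr\bigl(2P_{t+1}w_t+v_{t+1}\bigr)$, using the identity $A^\tr P_{t+1}-A^\tr P_{t+1}B\Sigma_t^{-1}B^\tr P_{t+1}=A_{\mathrm{cl},t}^\tr P_{t+1}$;
\item the constant $q_t$ collects the $w_t$- and $v_{t+1}$-only terms.
\end{itemize}

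Next we unroll the linear recursion with $v_T=0$. It gives $\tfrac12 v_{t+1}=\sum_{i=t+1}^{T-1}A_{\mathrm{cl},t+1}^\tr\cdots A_{\mathrm{cl},i}^\tr P_{i+1}w_i$. Substituting this into $u^*$, the $w_t$ term yields $K_{t\to t}w_t=\Sigma_t^{-1}B^\tr P_{t+1}w_t$ (empty product) and the remaining terms yield $K_{t\to i}w_i$. This gives the claimed feedforward form, with $K_{t\to i}=\Sigma_t^{-1}B^\tr A_{\mathrm{cl},t\to i}^\tr P_{i+1}$.

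The main obstacle is this index bookkeeping: the product $A_{\mathrm{cl},t\to i}$ must run over $A_{\mathrm{cl},t+1},\dots,A_{\mathrm{cl},i}$, consistent with the paper's convention. We will check this by a short induction on the unrolled $v_t$.

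Finally, we translate back via $\tilde x=x-\psi(r_t)$ and $w_i=A\psi(r_i)-\psi(r_{i+1})$, using the equivalence of \eqref{eq:LQT} and \eqref{eq:LQR-with-disturbance} and $\ell_{\mathrm{lft}}(x,u;r_t)=\ell_{\mathrm{lft}}(\tilde x,u;0)$. This shows $V_t^*(x;\mathbf r)=\tilde V_t^*(x-\psi(r_t);\mathbf w)$, which gives both the policy \eqref{eq:opt-LQT} and the stated value function form.
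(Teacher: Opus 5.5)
Your proposal is correct and follows essentially the same route as the paper: backward induction on a quadratic cost-to-go, the first-order condition $u^* = -K_t\tilde x - \Sigma_t^{-1}B^\tr\bigl(P_{t+1}w_t + \tfrac{1}{2}v_{t+1}\bigr)$, the recursion $\tfrac{1}{2}v_t = A_{\mathrm{cl},t}^\tr\bigl(P_{t+1}w_t + \tfrac{1}{2}v_{t+1}\bigr)$, and unrolling with $v_T = 0$; the paper writes the recursion through $S_t = I - P_{t+1}B\Sigma_t^{-1}B^\tr$ and the identity $A^\tr S_t = A_{\mathrm{cl},t}^\tr$. Two details need tightening: the $v_{t+1}$ term uses the unweighted identity $A^\tr - A^\tr P_{t+1}B\Sigma_t^{-1}B^\tr = A_{\mathrm{cl},t}^\tr$ rather than your $P_{t+1}$-weighted version, and at $t=T$ the formula reproduces $u_T^*=0$ only under the convention $K_T := 0$, since Definition~\ref{def:Riccati-recursion-LQT} defines $K_t$ only for $t\in[T-1]$.
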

\begin{proof}
    The proof proceeds via dynamic programming, recursively computing the optimal control actions. For $t \in [T-1]$, consider the optimal value function 
    \[
    \tilde{V}_t^*(\tilde{x};\mathbf{w}) = \min_u \left( \tilde{x}^\tr Q \tilde{x} + u^\tr R u + V_{t+1}^*(A \tilde{x} + B u + w_t) \right),
    \]
    with the terminal condition $\tilde{V}_{T}^*(\tilde{x};\mathbf{w}) = \tilde{x}^\tr Q \tilde{x}$. We aim to show that for all $t\in[T]$, the value function admits the quadratic form
    $
    \tilde{V}_t^*(\tilde{x};\mathbf{w})=\tilde{x}^\tr P_t\tilde{x}+v_t^\tr \tilde{x}+q_t.
    $
    At $t=T$, the claim holds trivially with $(P_{T},v_{T},q_{T})=(Q,0,0)$. 
    We now proceed by backward induction. Suppose that 
    $
        \tilde{V}^*_{t+1}(\tilde{x})=\tilde{x}^\tr P_{t+1}\tilde{x}+v_{t+1}^\tr \tilde{x}+q_{t+1}
    $
    holds for some $t\in[T-1]$ as the induction hypothesis. Then the value function at time $t$ is
    \begin{align*}
    \small
        \tilde{V}_t^*(\tilde{x};\mathbf{w})=\min_u \Big( \tilde{x}^\tr Q \tilde{x} + u^\tr R u + (A\tilde{x}+Bu+w_t&)^\tr P_{t+1}(A\tilde{x}+Bu+w_t)\\
        &+v_{t+1}^\tr (A\tilde{x}+Bu+w_t)+q_{t+1} \Big).
    \end{align*}
    The above is a quadratic optimization problem in $u$, rewritten as follows:
    \[
    \small \tilde{V}^*_t(\tilde{x};\mathbf{w})\!=\!\min_u\left( \begin{bmatrix}
        u \\ \tilde{x} \\ w_t \\ v_{t+1}
        \end{bmatrix}^\tr \begin{bmatrix}
        R+B^\tr P_{t+1}B & B^\tr P_{t+1} A & B^\tr P_{t+1} & \frac{1}{2}B^\tr \\ A^\tr P_TB & {Q}+A^\tr P_{t+1}A & A^\tr P_{t+1} & \frac{1}{2}A^\tr \\ P_{t+1} B & P_{t+1} A & P_{t+1} & \frac{1}{2}I \\ \frac{1}{2}B & \frac{1}{2}A & \frac{1}{2}I & 0
        \end{bmatrix}\begin{bmatrix}
        u \\ \tilde{x} \\ w_t \\ v_{t+1} \end{bmatrix}+q_{t+1} \right),
    \]
    where the minimizer (optimal control action) is given by
    \begin{equation} \label{eq:opt-u_t}
        u_t^*=\tilde{\pi}_t^*(\tilde{x};\mathbf{w}) = -(R+B^\tr P_{t+1}B)^{-1}B^\tr \left( P_{t+1}A\tilde{x} + P_{t+1}w_t + \frac{1}{2} v_{t+1} \right).
    \end{equation}
    Substituting \eqref{eq:opt-u_t} back into the value function yields
    \begin{align*}
        \tilde{V}^*_t(\tilde{x};\mathbf{w})=&\begin{bmatrix}
         \tilde{x} \\ w_t \\ v_{t+1}
        \end{bmatrix}^\tr \begin{bmatrix}
              {Q}+A^\tr P_{t+1}A & A^\tr P_{t+1} & \frac{1}{2}A^\tr \\  P_{t+1} A & P_{t+1} & \frac{1}{2}I \\  \frac{1}{2}A & \frac{1}{2}I & 0
        \end{bmatrix}\begin{bmatrix}
            \tilde{x} \\ w_t \\ v_{t+1}
        \end{bmatrix}\\
        &-\begin{bmatrix}
            \tilde{x} \\ w_t \\ v_{t+1}
        \end{bmatrix}^\tr\begin{bmatrix}
                A^\tr P_{t+1}B \\ P_{t+1}B \\ \frac{1}{2}B
            \end{bmatrix}(R+B^\tr P_{t+1}B)^{-1}\begin{bmatrix}
                A^\tr P_{t+1}B \\ P_{t+1}B \\ \frac{1}{2}B
            \end{bmatrix}^\tr\begin{bmatrix}
            \tilde{x} \\ w_t \\ v_{t+1}
        \end{bmatrix} + q_{t+1},
    \end{align*}
    which is in the quadratic form $\tilde{V}^*_t(\tilde{x};\mathbf{w})=\tilde{x}^\tr P_t \tilde{x} + v_t^\tr \tilde{x} + q_t$ with the recursion given by
    \begin{equation} \label{eq:value-fun}
        \begin{aligned}
            P_t&=Q+A^\tr P_{t+1}A -A^\tr P_{t+1}B(R+B^\tr P_{t+1}B)^{-1}B^\tr P_{t+1}A,\\
            v_t^\tr&=2\begin{bmatrix}
                w_t \\ v_{t+1}
            \end{bmatrix}^\tr \left(
            \begin{bmatrix}
            P_{t+1} A \\
            \frac{1}{2} A
            \end{bmatrix}
            -
            \begin{bmatrix}
            P_{t+1} B \\
            \frac{1}{2} B
            \end{bmatrix}
            \left(R + B^\tr P_{t+1} B\right)^{-1}
            B^\tr P_{t+1} A
            \right),\\
            q_t& = \begin{bmatrix}
            w_t \\ v_{t+1}
            \end{bmatrix}^\tr
            \left(
            \begin{bmatrix}
            P_{t+1} & \frac{1}{2} I \\
            \frac{1}{2} I & 0
            \end{bmatrix}
            -
            \begin{bmatrix}
            P_{t+1} B \\
            \frac{1}{2} B
            \end{bmatrix}
            \left(R + B^\tr P_{t+1} B\right)^{-1}
            \begin{bmatrix}
            P_{t+1} B \\
            \frac{1}{2} B
            \end{bmatrix}^\tr
            \right)
            \begin{bmatrix}
            w_t \\ v_{t+1}
            \end{bmatrix}
            + q_{t+1}.
        \end{aligned}
    \end{equation}
    We then proceed to derive an explicit expression for $u_t^*$ in \eqref{eq:opt-u_t} in terms of $\mathbf{w}=w_{1:T}$. For brevity, we use the notations $\Sigma_t := R+B^\tr P_{t+1}B$, $\Sigma_t := R+B^\tr P_{t+1}B$, $A_{\mathrm{cl},t} := A-BK_t$ and
    \[
    S_t := I - P_{t+1}B\Sigma_t^{-1} B^\tr.
    \]
    Hence, $u_t^*$ in \eqref{eq:opt-u_t} is rewritten as
    \begin{equation} \label{eq:opt-u_t-2}
        u_t^* = -K_t \tilde{x} -\Sigma_t^{-1}B^\tr \left(P_{t+1}w_t+\frac{1}{2}v_{t+1}\right).
    \end{equation}
    We start with the recursive expression of $v_t$
    \begin{equation} \label{eq:v_t}
        \begin{aligned}
            \frac{1}{2}v_t=&\left( \begin{bmatrix}
                P_{t+1}A \\ \frac{1}{2}A
            \end{bmatrix}^\tr - A^\tr P_{t+1}B\Sigma_t^{-1} \begin{bmatrix}
                P_{t+1}B \\ \frac{1}{2}B
            \end{bmatrix}^\tr \right) \begin{bmatrix}
                w_t \\ v_{t+1}
            \end{bmatrix} \\
            =&A^\tr (P_{t+1}\!-\!P_{t+1}B\Sigma_t^{-1} B^\tr P_{t+1})w_t+A^\tr(I\!+\!P_{t+1}B\Sigma_t^{-1}B^\tr)v_{t+1}\\
            =&A^\tr S_t \left(P_{t+1}w_t+\frac{1}{2}v_{t+1}\right).
        \end{aligned}
    \end{equation}
    Substituting \eqref{eq:v_t} into \eqref{eq:opt-u_t-2} gives
    \begin{align*}
        u_t^* = & -K_t \tilde{x} -\Sigma_t^{-1}B^\tr \left( P_{t+1}w_t + A^\tr S_{t+1}\left( P_{t+2}w_{t+1} + \frac{1}{2}v_{t+2} \right) \right)\\
        = & -K_t \tilde{x} -\Sigma_t^{-1}B^\tr \bigg( P_{t+1}w_t + A^\tr S_{t+1} P_{t+2}w_{t+1} \\
        &\qquad \qquad \qquad \qquad \qquad + A^\tr S_{t+1} A^\tr S_{t+2} P_{t+3}w_{t+2} +  \frac{1}{2}A^\tr S_{t+1} A^\tr S_{t+2} v_{t+2} \bigg)\\
        = & -K_t \tilde{x} -\Sigma_t^{-1}B^\tr \bigg( P_{t+1}w_t + A_{\mathrm{cl},t+1}^\tr P_{t+2}w_{t+1} \\
        &\qquad \qquad \qquad \qquad \qquad +A_{\mathrm{cl},t+1}^\tr A_{\mathrm{cl},t+2}^\tr P_{t+3}w_{t+2} +  \frac{1}{2}A^\tr S_{t+1} A^\tr S_{t+2} v_{t+2} \bigg),
    \end{align*}
    where the last equality follows from the identity $A^\tr S_{t}  = A_{\mathrm{cl},t}^\tr$.
    By iterative substitution and noting that $v_T=0$, we eventually obtain
    \begin{align*}
        u_t^* &=  -K_t \tilde{x} - \Sigma_t^{-1}B^\tr \bigg( P_{t+1}w_t + A_{\mathrm{cl},t+1}^\tr P_{t+2}w_{t+1} \\
        &\qquad \qquad \qquad \qquad \quad + (A_{\mathrm{cl},t+2} A_{\mathrm{cl},t+1})^\tr P_{t+3}w_{t+2} \!+\! \cdots \!+\! (A_{\mathrm{cl},T-1} \!\cdots\! A_{\mathrm{cl},t+1})^\tr P_{T}w_{T-1} \bigg)\\
        &= -K_t \tilde{x} - \sum_{i=t}^{T-1} K_{t\to i} w_i, \qquad K_{t \to i} := \Sigma_t^{-1} B^\tr (A_{\mathrm{cl}, i} \cdots A_{\mathrm{cl}, t+1})^\tr P_{i+1}.
    \end{align*}
    Lastly, the optimal offline policy in Theorem~\ref{thm:opt-offline-tracking} follows directly from the change of variables.
\end{proof}

\section{Structural Properties in Linear Quadratic Control} \label{app:LQ-property}

\subsection{Noiseless linear quadratic regulator}

We briefly review the optimal controllers for the noiseless LQR problem in both the finite- and infinite-horizon settings, which are key components for our subsequent analysis.
See \cite{zhou1996robust,bertsekas2012dynamic} for more details.
We begin with the finite-horizon problem:
\begin{equation} \label{eq:FH-LQR}
    \begin{aligned}
    \min_{u_{1:T}}& \quad \textstyle\sum_{t=1}^{T}x_t^\tr Qx_t+u_t^\tr R u_t \\
    \mathrm{s.t.} & \quad  x_{t+1}=Ax_t+Bu_t,\ \ x_1 \text{ given}.
    \end{aligned}
\end{equation}
Recall the Riccati recursion in Definition~\ref{def:Riccati-recursion-LQT}. The solution to \eqref{eq:FH-LQR} is given below.
\begin{prop}
    The optimal control policy for \eqref{eq:FH-LQR} is given by $u_t=-K_tx_t$ for all $t\in[T]$. Moreover, the optimal cost-to-go is quadratic in the state, i.e., for any $t\in[T]$ and $x_t$,
    \[
    x_t^\tr P_t x_t = \min_{u_{t:T}}\textstyle\sum_{i=t}^{T}x_i^\tr Q x_i + u_i^\tr R u_i.
    \]
\end{prop}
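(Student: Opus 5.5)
The plan is to prove this by backward induction on $t$, i.e., standard dynamic programming. This is exactly the noiseless special case $w_t\equiv 0$ of Theorem~\ref{thm:opt-LQR-policy}, so the cleanest route is to specialize that result. If we set $\tilde{x}_t = x_t$ and $w_t = 0$ for all $t$ in \eqref{eq:LQR-with-disturbance}, problem \eqref{eq:LQR-with-disturbance} coincides with \eqref{eq:FH-LQR}. Theorem~\ref{thm:opt-LQR-policy} then gives the optimal policy $u_t = -K_t x_t - \sum_{i=t}^{T-1} K_{t\to i} w_i = -K_t x_t$ and the value function $\tilde V_t^*(x) = x^\tr P_t x + v_t^\tr x + q_t$.

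It remains to show that $v_t = 0$ and $q_t = 0$ when $\mathbf{w}=0$. We read this off the recursion \eqref{eq:value-fun}. There $v_T=0$ and $q_T=0$, and $v_t$ is linear in the stacked vector $[w_t^\tr, v_{t+1}^\tr]^\tr$, so $w_t=0$ and $v_{t+1}=0$ give $v_t=0$. Similarly, $q_t$ equals a quadratic form in $[w_t^\tr, v_{t+1}^\tr]^\tr$ plus $q_{t+1}$, so $q_t = q_{t+1} = 0$. Backward induction from $t=T$ then yields $v_t=0$ and $q_t=0$ for all $t\in[T]$. Hence the cost-to-go is $x_t^\tr P_t x_t$, with $P_t$ generated by the Riccati recursion of Definition~\ref{def:Riccati-recursion-LQT}.

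At the terminal step the claim is direct: the stage cost is $x_T^\tr Q x_T + u_T^\tr R u_T$, the minimizer is $u_T=0$ because $R\succ 0$, and $P_T=Q$. This agrees with the convention $K_T=0$.

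For a self-contained argument instead, I would run the induction directly. Assume the cost-to-go at $t+1$ is $x^\tr P_{t+1}x$ with $P_{t+1}\succeq 0$. Then
\[
x^\tr Q x + u^\tr R u + (Ax+Bu)^\tr P_{t+1}(Ax+Bu)
\]
is strictly convex in $u$, since $\Sigma_t = R + B^\tr P_{t+1}B \succ 0$. Setting the gradient to zero gives $u = -\Sigma_t^{-1}B^\tr P_{t+1}A x = -K_t x$. Completing the square then gives exactly the Riccati update for $P_t$.

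The only point needing care is $P_t\succeq 0$, which is needed to keep $\Sigma_t$ invertible at every step. It holds because $P_t$ is a minimum of sums of positive semidefinite terms, so $x^\tr P_t x \ge 0$ for all $x$. No other obstacle is expected.
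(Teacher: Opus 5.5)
Your proposal is correct and essentially matches the paper. The paper gives no separate proof and cites the standard dynamic-programming treatment; your argument is exactly the $\mathbf{w}=0$ case of the backward-induction derivation in Theorem~\ref{thm:opt-LQR-policy}, where $v_t=q_t=0$ follows by induction from \eqref{eq:value-fun}, and your handling of the terminal convention $K_T=0$ and of $\Sigma_t\succ 0$ via $P_{t+1}\succeq 0$ supplies the details the paper leaves implicit.
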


We now turn to the infinite-horizon case, where the optimal solution is characterized by the discrete algebraic Riccati equation (DARE).
\begin{prop} \label{prop:DARE}
    There exists a unique positive semidefinite matrix $P_\infty\succeq0$ that solves the DARE:
    \begin{equation} \label{eq:DARE}
        P_\infty=Q+A^\tr P_{\infty}A -A^\tr P_{\infty}B(R+B^\tr P_{\infty}B)^{-1}B^\tr P_{\infty}A.
    \end{equation}
\end{prop}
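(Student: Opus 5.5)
The plan is to identify every positive semidefinite solution of \eqref{eq:DARE} with the value function of the infinite-horizon LQR problem $V_\infty(x_1):=\inf_{u}\sum_{t\ge1}x_t^\tr Qx_t+u_t^\tr Ru_t$ subject to $x_{t+1}=Ax_t+Bu_t$. I would first show that such a solution exists and then that any solution must equal this value function, using only Assumption~\ref{assump:LQ}.

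\emph{Existence.} I would run the Riccati recursion of Definition~\ref{def:Riccati-recursion-LQT} backward from the zero terminal matrix. Write $P^{(N)}$ for the resulting $N$-step cost-to-go matrix, so that $x^\tr P^{(N)}x$ is the optimal cost of the $N$-step problem \eqref{eq:FH-LQR} started at $x$.
\begin{itemize}
\item Since the stage costs are nonnegative, $P^{(N)}$ is monotonically nondecreasing in $N$.
\item By stabilizability, there is a gain $K_0$ with $\rho(A-BK_0)<1$. Using $u=-K_0x$ gives a finite cost $x^\tr Mx$ with $M$ solving a Lyapunov equation, so $0\preceq P^{(N)}\preceq M$ uniformly in $N$.
\end{itemize}
A bounded monotone sequence of symmetric matrices converges, say $P^{(N)}\to P_\infty\succeq 0$. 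Passing to the limit in the recursion, which is continuous because $R+B^\tr PB\succ0$, shows that $P_\infty$ solves \eqref{eq:DARE}.

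\emph{Any PSD solution is stabilizing.} Let $P\succeq0$ solve \eqref{eq:DARE}, and set $K=(R+B^\tr PB)^{-1}B^\tr PA$ and $A_{\mathrm{cl}}=A-BK$. The DARE can be rewritten in closed-loop form as
\[
P=Q+K^\tr RK+A_{\mathrm{cl}}^\tr PA_{\mathrm{cl}}.
\]
Suppose $A_{\mathrm{cl}}x=\lambda x$ with $|\lambda|\ge1$ and $x\neq0$. Then
\[
(1-|\lambda|^2)\,x^*Px=x^*(Q+K^\tr RK)x\ge0 .
\]
The left side is nonpositive, so both sides vanish. Hence $Q^{1/2}x=0$ and $Kx=0$ (using $R\succ0$), which gives $Ax=A_{\mathrm{cl}}x=\lambda x$. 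This contradicts detectability of $(Q,A)$, so $\rho(A_{\mathrm{cl}})<1$.

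\emph{Uniqueness.} For such a $P$ and any input sequence, I would complete the square to obtain
\[
\sum_{t=1}^{N}\big(x_t^\tr Qx_t+u_t^\tr Ru_t\big)+x_{N+1}^\tr Px_{N+1}=x_1^\tr Px_1+\sum_{t=1}^{N}\|u_t+Kx_t\|^2_{R+B^\tr PB}.
\]
\begin{itemize}
\item Taking $u_t=-Kx_t$ makes the last sum zero. Since $A_{\mathrm{cl}}$ is stable, $x_{N+1}\to0$, so this policy achieves cost exactly $x_1^\tr Px_1$.
\item For any input with finite cost, $\sum_t\|Q^{1/2}x_t\|^2$ and $\sum_t\|u_t\|^2$ are finite. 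I would then argue that $x_t\to0$, which makes the term $x_{N+1}^\tr Px_{N+1}$ vanish in the limit and gives cost at least $x_1^\tr Px_1$.
\end{itemize}
Together these give $x_1^\tr Px_1=V_\infty(x_1)$ for every $x_1$. Thus every PSD solution equals the same quadratic form and is unique; in particular it coincides with the limit constructed above.

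\emph{Main obstacle.} The delicate step is showing that finite cost forces $x_t\to0$, since only detectability is available and not observability. I would pick an output-injection gain $L$ with $\rho(A-LQ^{1/2})<1$, which exists by detectability, and write
\[
x_{t+1}=(A-LQ^{1/2})x_t+LQ^{1/2}x_t+Bu_t .
\]
This is a stable linear system driven by the square-summable inputs $Q^{1/2}x_t$ and $u_t$, so its state is square-summable and in particular tends to zero.
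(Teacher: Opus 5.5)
Your proof is correct and complete. The paper does not prove this proposition: it states it as a classical fact with a pointer to \citep{zhou1996robust,bertsekas2012dynamic}, and it does the same for Proposition~\ref{prop:opt-inf-LQR}.

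Your route is the standard self-contained one, and it fits the paper's setting well:
\begin{itemize}
\item Your value iteration from $0$ is exactly the recursion of Definition~\ref{def:Riccati-recursion-LQT} shifted by one index, since one step from $0$ gives $Q$. So your limit is the same $P_\infty$ that Proposition~\ref{prop:riccati-gain-convergence} refers to.
\item Stabilizability is used only for the Lyapunov upper bound. Detectability is used twice, in the PBH step and in the output-injection step. Everything therefore runs under Assumption~\ref{assump:LQ} with the singular $Q=C^\tr Q_zC$ that the lifting forces.
\item As by-products you obtain $\rho(A_{\mathrm{cl},\infty})<1$, which is the second claim of Proposition~\ref{prop:opt-inf-LQR}. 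You also identify $x^\tr P_\infty x$ with the infinite-horizon optimal cost. The paper takes both on citation.
\end{itemize}

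What your argument does not give is a rate. Monotone convergence says nothing about how fast $P^{(N)}\to P_\infty$. The factor $\rho_\infty$ used throughout Section~\ref{sec:regret-analysis} still has to come from \citep{hager1976convergence}.

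You were right to flag $x_t\to0$ as the crux. Take $A=2$, $B=1$, $Q=0$, $R=1$, which is stabilizable but not detectable. The DARE then has two PSD solutions, $P=0$ and $P=3$. The lower bound fails for $P=3$ exactly because the zero-cost input $u\equiv0$ lets $x_t$ diverge, so the terminal term $x_{N+1}^\tr P x_{N+1}$ does not vanish.
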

Given the solution $P_\infty$, we define the following associated steady-state quantities:
\[
\Sigma_\infty := R+B^\tr P_{\infty}B,\quad K_\infty := \Sigma_\infty^{-1}B^\tr P_\infty A, \quad A_{\text{cl},\infty}:=A-BK_\infty.
\]
Note that these are intrinsic control-theoretic quantities determined solely by $(A,B,Q,R)$. 

The following result describes the infinite-horizon optimal policy and its stability.

\begin{prop} \label{prop:opt-inf-LQR}
    As $T\to\infty$, the optimal control policy for \eqref{eq:FH-LQR} converges to the stationary policy $u_t=-K_\infty x_t$.
    Moreover, the corresponding closed-loop matrix $A_{\text{cl},\infty}$ is stable, i.e., $\rho(A_{\text{cl},\infty})<1$.
\end{prop}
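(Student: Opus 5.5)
The plan is to treat the two claims separately. First I show that the Riccati iterates of Definition~\ref{def:Riccati-recursion-LQT} converge to a positive semidefinite solution of the DARE~\eqref{eq:DARE}. That solution must then be $P_\infty$ by the uniqueness in Proposition~\ref{prop:DARE}, and convergence of the gains $K_t$ follows. Second, I prove $\rho(A_{\mathrm{cl},\infty})<1$ by a Lyapunov/PBH argument that uses detectability of $(Q,A)$. Positive definiteness of $Q$ is never used, in keeping with Assumption~\ref{assump:LQ}.

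\textbf{Convergence.} Reverse time by setting $\Pi_k := P_{T-k}$. Then $\Pi_0 = Q$ and $\Pi_{k+1} = \mathcal{R}(\Pi_k)$, where $\mathcal{R}(P) := Q + A^\tr P A - A^\tr P B (R+B^\tr P B)^{-1} B^\tr P A$. This map is well defined and continuous on $P\succeq 0$ because $R\succ 0$. By the finite-horizon proposition, $x^\tr \Pi_k x$ is the optimal cost of \eqref{eq:FH-LQR} over $k+1$ stages starting from $x$. Since stage costs are nonnegative, adding a stage cannot decrease the optimum, so $\Pi_k \preceq \Pi_{k+1}$.

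For a uniform upper bound I use stabilizability of $(A,B)$. Pick $F$ with $\rho(A-BF)<1$ and let $X\succeq 0$ solve the Lyapunov equation $X = Q + F^\tr R F + (A-BF)^\tr X (A-BF)$. Applying the suboptimal policy $u=-Fx$ on any horizon costs at most $x^\tr X x$, so $\Pi_k \preceq X$ for all $k$. A monotone, bounded sequence of symmetric matrices converges, so $\Pi_k \to \bar P \succeq 0$. Passing to the limit in $\Pi_{k+1}=\mathcal{R}(\Pi_k)$ shows $\bar P$ solves \eqref{eq:DARE}, and Proposition~\ref{prop:DARE} gives $\bar P = P_\infty$.

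Now fix $t$ and let $T\to\infty$. The gain $K_t$ depends continuously on $P_{t+1}=\Pi_{T-t-1}$, so $K_t \to (R+B^\tr P_\infty B)^{-1}B^\tr P_\infty A = K_\infty$. Hence the optimal policy converges to the stationary law $u_t=-K_\infty x_t$.

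\textbf{Stability.} Rearranging the DARE with $K_\infty$ gives the closed-loop Lyapunov identity
\[
P_\infty = Q + K_\infty^\tr R K_\infty + A_{\mathrm{cl},\infty}^\tr P_\infty A_{\mathrm{cl},\infty}.
\]
This follows by expanding $(A-BK_\infty)^\tr P_\infty (A-BK_\infty)$ and using $\Sigma_\infty K_\infty = B^\tr P_\infty A$.

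Suppose, for contradiction, that $A_{\mathrm{cl},\infty} v = \lambda v$ with $v\neq 0$ and $|\lambda|\ge 1$. Multiplying the identity by $v^*$ and $v$ gives
\[
(1-|\lambda|^2)\, v^* P_\infty v = v^* Q v + v^* K_\infty^\tr R K_\infty v.
\]
The left side is $\le 0$ and the right side is $\ge 0$, so both terms on the right vanish. Since $Q\succeq 0$, $v^*Qv=0$ implies $Qv=0$. Since $R\succ 0$, the second term vanishing implies $K_\infty v = 0$. Then $Av = A_{\mathrm{cl},\infty}v + BK_\infty v = \lambda v$ with $Qv=0$ and $|\lambda|\ge 1$. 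This contradicts detectability of $(Q,A)$ by the PBH test. Hence $\rho(A_{\mathrm{cl},\infty})<1$.

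The main obstacle I expect is the convergence step with $Q$ only semidefinite. Monotonicity must come from the cost interpretation rather than from any strict positivity, and the upper bound must come from a stabilizing feedback. I would also double-check that the uniqueness in Proposition~\ref{prop:DARE} is applied within the positive semidefinite class, which is exactly where $\bar P$ lives.
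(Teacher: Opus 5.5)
Your proof is correct. The paper itself gives no proof of this proposition. It lists it among standard LQR facts and points to \cite{zhou1996robust,bertsekas2012dynamic}. Your argument is essentially the textbook proof found there:
\begin{itemize}
\item monotonicity of the value-iteration sequence, from the cost interpretation;
\item a uniform upper bound from the Lyapunov cost of a stabilizing gain;
\item identification of the limit via the uniqueness in Proposition~\ref{prop:DARE};
\item a Lyapunov/PBH contradiction that uses detectability of $(Q,A)$.
\end{itemize}
All of this is carried out under exactly the paper's standing assumption $Q\succeq 0$, never $Q\succ 0$.

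Two remarks on how your argument fits the paper.

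First, it is purely qualitative, since monotone convergence gives no rate. The regret analysis actually relies on $\|P_t-P_\infty\|=\mathcal{O}(\rho_\infty^{T-t})$, which the paper imports separately in Proposition~\ref{prop:riccati-gain-convergence}.

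Second, your stability step only uses that $P_\infty$ is \emph{some} positive semidefinite solution of \eqref{eq:DARE}, so it shows every such solution is stabilizing. You can push this further. The completion-of-squares identity
$Q+K^\tr RK+(A-BK)^\tr P(A-BK)=\mathcal{R}(P)+(K-K_P)^\tr(R+B^\tr PB)(K-K_P)$, with $K_P:=(R+B^\tr PB)^{-1}B^\tr PA$, applies to any two PSD solutions $P',P''$. It gives $P'-P''\preceq (A-BK_{P''})^\tr(P'-P'')(A-BK_{P''})$, and iterating with $A-BK_{P''}$ stable yields $P'-P''\preceq 0$. The symmetric argument gives $P''-P'\preceq 0$, so $P'=P''$. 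You could therefore drop the appeal to the uniqueness in Proposition~\ref{prop:DARE} and make this part of the appendix fully self-contained.
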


\subsection{Strong stability and regularity of Riccati-based quantities}
\label{subsec:property-offline-policy}

We establish some structural properties of key control-theoretic quantities intrinsic to $(A,B,Q,R)$ and the Riccati recursion, which underpin the analysis of the lifted-space optimal offline and online MPC controllers in Theorems~\ref{thm:opt-offline-tracking} and \ref{thm:opt-MPC}.
We start with a quantitative notion of stability.

\begin{defn}[Strong Stability \citep{cohen2018online}]
A matrix $A-BK$ is said to be \emph{$(\kappa, \gamma)$ strongly stable} for some $\kappa > 0$ and  $\gamma \in [0,1)$ if there exist matrices $H$ and $L$ such that
\[
A - BK = H L H^{-1}, \quad \|H\| \cdot \|H^{-1}\| \leq \kappa, \quad \|L\| \leq \gamma.
\]
\end{defn}
We may also refer to a gain matrix $K$ as (strongly) stable, meaning that its corresponding closed-loop matrix $A-BK$ is (strongly) stable.
Moreover, any stable matrix is strongly stable for some $\kappa$ and $\gamma$.

\begin{lem}\cite[Lemma B.1.]{cohen2018online} \label{lem:stable-imply-SS}
Suppose that \( A - BK \) is stable, i.e., \( \rho(A - BK) < 1 \). Then there exist constants \( \kappa > 0 \) and \( \gamma \in [0,1) \) such that \( A - BK \) is \( (\kappa, \gamma) \) strongly stable.
\end{lem}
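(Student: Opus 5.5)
We prove the lemma through the real Jordan (or Schur) decomposition of $M := A-BK$. The idea is to build a similarity transform in which the transformed matrix has operator norm strictly less than one.

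Set $\rho := \rho(M) < 1$ and fix a target rate $\gamma \in (\rho, 1)$; the choice $\gamma = \tfrac12(1+\rho)$ is natural, and it matches the definition of $\gamma_\infty$ used in the paper.

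\textbf{Step 1 (triangularization).} Write the complex Schur form $M = U T U^{*}$, with $U$ unitary and $T = \Lambda + N$. Here $\Lambda$ is the diagonal matrix of eigenvalues, so $\|\Lambda\| \le \rho$, and $N$ is strictly upper triangular.

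\textbf{Step 2 (rescaling).} Let $D_\epsilon = \mathrm{diag}(1, \epsilon, \epsilon^2, \ldots, \epsilon^{n-1})$ with $\epsilon \in (0,1]$. Then $D_\epsilon^{-1} T D_\epsilon = \Lambda + N_\epsilon$, where the $(i,j)$ entry of $N_\epsilon$ equals $N_{ij}\epsilon^{j-i}$ for $j>i$. Hence $\|N_\epsilon\| \le \epsilon \|N\|_F$ for $\epsilon \le 1$.

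\textbf{Step 3 (choice of $\epsilon$).} Choose $\epsilon = \min\{1, (\gamma-\rho)/(\|N\|_F+1)\}$. Define $H := U D_\epsilon$ and $L := \Lambda + N_\epsilon$. Then $M = H L H^{-1}$ and $\|L\| \le \rho + \epsilon\|N\|_F \le \gamma$. Since $U$ is unitary, $\|H\|\,\|H^{-1}\| = \|D_\epsilon\|\,\|D_\epsilon^{-1}\| = \epsilon^{-(n-1)} =: \kappa$.

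This construction produces complex $H$ and $L$. If real matrices are required, we would instead apply the same argument to the real Schur form, which is block upper triangular with $1\times1$ and $2\times2$ diagonal blocks, and scale block-wise. One subtlety arises here: a $2\times 2$ real block for the eigenvalue pair $a \pm bi$ has norm at least $\rho$ but possibly larger. We would fix this by taking that block in the rotation-scaling form $\begin{bmatrix} a & -b \\ b & a\end{bmatrix}$, which has norm exactly $\sqrt{a^2+b^2} \le \rho$; this form is attainable through a further real similarity absorbed into $H$. This real refinement is the only mildly delicate step.

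An alternative route avoids decompositions entirely and is worth recording as a fallback. Since $\rho(M/\gamma) < 1$, the Lyapunov series $X = \sum_{k\ge0} \gamma^{-2k} (M^k)^{\tr} M^k$ converges, is positive definite ($X \succeq I$), and satisfies $M^{\tr} X M \preceq \gamma^2 X$. Take $H = X^{-1/2}$ and $L = X^{1/2} M X^{-1/2}$. Then $\|L\| \le \gamma$, and $\kappa = \sqrt{\|X\|}$ since $\|X^{-1}\| \le 1$. This route is real-valued and cleaner, so we would adopt it as the main proof and keep the Schur argument as intuition.

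In the Lyapunov route, the main check is that the constants are finite. Finiteness of $\|X\|$ follows from Gelfand's formula, which gives $\|M^k\| \le C(\gamma')^{k}$ for some $\gamma' \in (\rho, \gamma)$, so the series is dominated by a convergent geometric sum.
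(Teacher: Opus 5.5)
Your argument is correct, and both constructions check out. In the Schur route, $\|N_\epsilon\|\le\epsilon\|N\|_F$ together with your choice of $\epsilon$ gives $\|L\|\le\gamma$ and $\|H\|\,\|H^{-1}\|=\epsilon^{-(n-1)}$. In the Lyapunov route, $M^\tr XM=\gamma^2(X-I)\preceq\gamma^2X$ and $X\succeq I$ give $\|L\|\le\gamma$ and $\|H\|\,\|H^{-1}\|=\sqrt{\|X\|\,\|X^{-1}\|}\le\sqrt{\|X\|}$. The series defining $X$ converges by Gelfand's formula.

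The paper gives no proof of its own and simply cites \cite{cohen2018online}. The Lyapunov construction, which you end up adopting as your main proof, is the standard way to prove this fact. The Schur-rescaling argument is a genuinely different and more explicit route.

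The Schur route shows what drives $\kappa$. With $N=0$ (normal $M$) you can take $\epsilon=1$, so $\kappa=1$ and $\gamma=\rho$. For $N\neq0$, the factor $\epsilon^{-(n-1)}$ blows up as $\gamma\downarrow\rho$. The paper's definition asks for neither a real nor a positive definite $H$, and all later uses are operator-norm estimates. So your complex construction already suffices, and the real-Schur refinement is optional. The Lyapunov route instead gives a real, symmetric positive definite $H$ and a $\kappa$ expressed through a Gramian-type quantity, which is the form regret analyses usually exploit.

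Your insistence on $\gamma\in(\rho,1)$ is also correct. It is sharper than the remark the paper makes right after the lemma, that ``from the proof'' one may take $\tilde{\gamma}_\infty=\rho(A_{\mathrm{cl},\infty})$. Neither construction gives that, and it is false in general. If $M=HLH^{-1}$ with $\|L\|\le\rho(M)$, then $\|M^k\|\le\kappa\,\rho(M)^k$ for all $k$. This fails whenever an eigenvalue of maximal modulus has a nontrivial Jordan block, for example any nonzero nilpotent $M$. This does not affect the lemma as stated. The downstream results still go through if $\rho(A_{\mathrm{cl},\infty})$ is replaced by any number in $(\rho(A_{\mathrm{cl},\infty}),1)$.
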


Below we establish the strong stability of the closed-loop matrix $A_{\mathrm{cl},\infty} \!:=\! A - B K_\infty$.

\begin{cor} \label{coro:Acl-SS}
    There exists some $\kappa_{\infty}>0$ and $\tilde{\gamma}_{\infty}\in[0,1)$ such that $A_{\mathrm{cl},\infty}$ is $(\kappa_{\infty},\tilde{\gamma}_{\infty})$ strongly stable. That is, there exist matrices $H_{\mathrm{cl},\infty}$ and $L_{\mathrm{cl},\infty}$ such that 
    \[
    A_{\mathrm{cl},\infty} = H_{\mathrm{cl},\infty} L_{\mathrm{cl},\infty} H_{\mathrm{cl},\infty}^{-1},
    \quad
    \|H_{\mathrm{cl},\infty}\| \cdot \|H_{\mathrm{cl},\infty}^{-1}\| \leq \kappa_{\infty},
    \quad
    \|L_{\mathrm{cl},\infty}\| \leq \tilde{\gamma}_{\infty}.
    \]
\end{cor}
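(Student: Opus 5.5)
The statement to prove is Corollary~\ref{coro:Acl-SS}: $A_{\mathrm{cl},\infty}$ is $(\kappa_\infty,\tilde\gamma_\infty)$ strongly stable. The plan is to obtain it directly from Proposition~\ref{prop:opt-inf-LQR} and Lemma~\ref{lem:stable-imply-SS}. Under Assumption~\ref{assump:LQ}, $(A,B)$ is stabilizable and $(Q,A)$ is detectable, so Proposition~\ref{prop:DARE} gives a unique stabilizing solution $P_\infty\succeq 0$ of the DARE. Hence $K_\infty=\Sigma_\infty^{-1}B^\tr P_\infty A$ is well defined: $\Sigma_\infty\succeq R\succ0$ is invertible. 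Proposition~\ref{prop:opt-inf-LQR} then gives $\rho(A_{\mathrm{cl},\infty})<1$, and applying Lemma~\ref{lem:stable-imply-SS} with $K=K_\infty$ yields constants $\kappa_\infty>0$, $\tilde\gamma_\infty\in[0,1)$ and matrices $H_{\mathrm{cl},\infty},L_{\mathrm{cl},\infty}$ with the stated properties.

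To keep the argument self-contained, and to pin down $\tilde\gamma_\infty$ in relation to $\gamma_\infty=\tfrac12(1+\rho(A_{\mathrm{cl},\infty}))$ as used later, I would also sketch the construction behind Lemma~\ref{lem:stable-imply-SS}. Take any $\tilde\gamma\in(\rho(A_{\mathrm{cl},\infty}),1)$, for instance $\tilde\gamma=\gamma_\infty$. The matrix $A_{\mathrm{cl},\infty}/\tilde\gamma$ has spectral radius below one, so the Lyapunov equation $X=I+\tilde\gamma^{-2}A_{\mathrm{cl},\infty}^\tr XA_{\mathrm{cl},\infty}$ has a solution $X\succeq I$, given by the convergent series $\sum_k \tilde\gamma^{-2k}(A_{\mathrm{cl},\infty}^k)^\tr A_{\mathrm{cl},\infty}^k$. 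From the equation, $A_{\mathrm{cl},\infty}^\tr XA_{\mathrm{cl},\infty}\preceq\tilde\gamma^2 X$.

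Set $H=X^{-1/2}$ and $L=X^{1/2}A_{\mathrm{cl},\infty}X^{-1/2}$. Then $A_{\mathrm{cl},\infty}=HLH^{-1}$, and $\|L\|\le\tilde\gamma$ because $L^\tr L=X^{-1/2}A_{\mathrm{cl},\infty}^\tr XA_{\mathrm{cl},\infty}X^{-1/2}\preceq\tilde\gamma^2 I$. The conditioning bound is $\|H\|\|H^{-1}\|\le\|X\|^{1/2}=:\kappa_\infty$, which is finite and depends only on $(A,B,Q,R)$.

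There is essentially no obstacle here. The only care needed is to confirm that the hypotheses of Proposition~\ref{prop:opt-inf-LQR} hold under the merely semidefinite $Q$; detectability of $(Q,A)$ is exactly what guarantees that the stabilizing DARE solution exists and gives a stable closed loop.
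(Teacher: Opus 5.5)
Your proof is correct and matches the paper's: Proposition~\ref{prop:opt-inf-LQR} gives $\rho(A_{\mathrm{cl},\infty})<1$ under stabilizability and detectability with $Q\succeq 0$, and Lemma~\ref{lem:stable-imply-SS} then yields strong stability. Your Lyapunov construction of that lemma is also sound, and requiring $\tilde\gamma>\rho(A_{\mathrm{cl},\infty})$ strictly is necessary whenever a maximal-modulus eigenvalue has a nontrivial Jordan block, so the paper's aside that one may take $\tilde\gamma_\infty=\rho(A_{\mathrm{cl},\infty})$ holds only in the semisimple case; however, drop your suggestion $\tilde\gamma=\gamma_\infty=\frac{1}{2}\bigl(1+\rho(A_{\mathrm{cl},\infty})\bigr)$, because Lemma~\ref{lem:bound-L} needs the strict margin $\tilde\gamma_\infty<\gamma_\infty$ to absorb $\kappa_\infty\|B\|\,\|K_t-K_\infty\|$, so choose $\tilde\gamma_\infty$ strictly between $\rho(A_{\mathrm{cl},\infty})$ and $\gamma_\infty$.
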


\begin{proof}
    Since $A_{\mathrm{cl},\infty}$ is stable by Proposition~\ref{prop:opt-inf-LQR}, Lemma~\ref{lem:stable-imply-SS} implies that it is strongly stable.
\end{proof}

In fact, from the proof of Lemma~\ref{lem:stable-imply-SS}, we can choose $\tilde{\gamma}_{\infty}:=\rho(A_{\mathrm{cl},\infty})\in[0,1)$.

\begin{lem}
    For any $i\geq0$, we have $\|A_{\mathrm{cl},\infty}^i\|\leq \kappa_{\infty} \tilde{\gamma}_{\infty}^i$.
\end{lem}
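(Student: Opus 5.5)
The plan is to use the similarity decomposition from Corollary~\ref{coro:Acl-SS} and let the middle factors cancel telescopically. By that corollary we may write $A_{\mathrm{cl},\infty} = H_{\mathrm{cl},\infty} L_{\mathrm{cl},\infty} H_{\mathrm{cl},\infty}^{-1}$, with $\|H_{\mathrm{cl},\infty}\|\,\|H_{\mathrm{cl},\infty}^{-1}\|\le\kappa_\infty$ and $\|L_{\mathrm{cl},\infty}\|\le\tilde\gamma_\infty$.

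\textbf{Case $i=0$.} Here $A_{\mathrm{cl},\infty}^0=I$, so $\|A_{\mathrm{cl},\infty}^0\|=1$. It remains to check $1\le\kappa_\infty$, and this holds because
\[
1=\|I\|=\|H_{\mathrm{cl},\infty}H_{\mathrm{cl},\infty}^{-1}\|\le \|H_{\mathrm{cl},\infty}\|\,\|H_{\mathrm{cl},\infty}^{-1}\|\le\kappa_\infty .
\]

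\textbf{Case $i\ge1$.} Expand the power as a product of $i$ factors $H_{\mathrm{cl},\infty}L_{\mathrm{cl},\infty}H_{\mathrm{cl},\infty}^{-1}$. Each inner pair $H_{\mathrm{cl},\infty}^{-1}H_{\mathrm{cl},\infty}$ cancels, which gives
\[
A_{\mathrm{cl},\infty}^i = H_{\mathrm{cl},\infty}L_{\mathrm{cl},\infty}^iH_{\mathrm{cl},\infty}^{-1}.
\]
Submultiplicativity of the induced norm then yields
\[
\|A_{\mathrm{cl},\infty}^i\|\le \|H_{\mathrm{cl},\infty}\|\,\|L_{\mathrm{cl},\infty}\|^i\,\|H_{\mathrm{cl},\infty}^{-1}\|\le\kappa_\infty\tilde\gamma_\infty^{\,i}.
\]

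There is no real obstacle here. The argument is a direct consequence of strong stability, and the only point that needs care is the trivial $i=0$ case, where the bound $\kappa_\infty\ge1$ is used.
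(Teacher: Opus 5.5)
Your proposal is correct and follows the paper's proof: write $A_{\mathrm{cl},\infty}^i = H_{\mathrm{cl},\infty} L_{\mathrm{cl},\infty}^i H_{\mathrm{cl},\infty}^{-1}$ using the strong-stability decomposition from Corollary~\ref{coro:Acl-SS}, then apply submultiplicativity. Your separate treatment of $i=0$ is fine, though not strictly needed: the same one-line bound already covers it, since $\|L_{\mathrm{cl},\infty}^0\| = 1 = \tilde{\gamma}_\infty^0$.
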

\begin{proof}
    By Corollary~\ref{coro:Acl-SS}, it follows that $\|A_{\mathrm{cl},\infty}^i\|=\|H_{\mathrm{cl},\infty} L_{\mathrm{cl},\infty}^i H_{\mathrm{cl},\infty}^{-1}\|\leq \kappa_{\infty} \tilde{\gamma}_{\infty}^i$.
\end{proof}

We next recall a classical result from \cite{hager1976convergence}, which guarantees exponential convergence of the Riccati recursion to the DARE solution $P_\infty$.
This result requires only $Q\succeq0$ and detectability of $(A,Q)$, unlike more recent analyses (e.g., \cite{dean2018regret,foster2020logarithmic,zhang2021regret}) that assume $Q\succ0$ to obtain explicit rates.
Since the Koopman lifting yields only $Q\succeq0$, we rely on this more general guarantee. 

\begin{prop}[\cite{hager1976convergence}] \label{prop:riccati-gain-convergence}
Let $P_t$ and $K_t$ be defined as in Definition~\ref{def:Riccati-recursion-LQT}. Then there exists a constant \( \rho_\infty \in (0,1) \), independent of the horizon length $T$, such that for all $t\in[T]$,
\[
\|P_t - P_\infty\| = \mathcal{O}(\rho_\infty^{T - t})
\quad \text{and} \quad
\|K_t - K_\infty\| = \mathcal{O}(\rho_\infty^{T - t}),
\]
where the hidden constant in \( \mathcal{O}(\cdot) \) depends only on \( (A, B, Q, R) \) and not on \( T \).
\end{prop}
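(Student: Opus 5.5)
The plan is to study the error $E_t := P_t - P_\infty$ through a product-form difference identity for the Riccati map. Writing $\mathcal{R}(X) := Q + A^\tr X A - A^\tr X B(R+B^\tr X B)^{-1}B^\tr X A$ and $A_{\mathrm{cl}}(X) := A - B(R+B^\tr XB)^{-1}B^\tr X A$, I will first establish (or recall from \cite{hager1976convergence}) the identity
\[
\mathcal{R}(X) - \mathcal{R}(Y) = A_{\mathrm{cl}}(Y)^\tr (X - Y)\, A_{\mathrm{cl}}(X), \qquad X,Y\succeq 0 .
\]
The check is by expanding both sides and using the optimality (completion-of-squares) characterization of $\mathcal{R}$. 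Applying it with $X = P_{t+1}$, $Y = P_\infty$ (so $\mathcal{R}(P_\infty)=P_\infty$ by Proposition~\ref{prop:DARE}) gives $E_t = A_{\mathrm{cl},\infty}^\tr E_{t+1} A_{\mathrm{cl},t}$. Iterating down from $E_T = Q - P_\infty$ yields
\[
E_t = (A_{\mathrm{cl},\infty}^\tr)^{T-t}\,(Q-P_\infty)\,A_{\mathrm{cl},T-1}\cdots A_{\mathrm{cl},t}.
\]

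With this representation, the bound $\|E_t\|\le \kappa_\infty \tilde\gamma_\infty^{T-t}\,\|Q-P_\infty\|\,\|\Phi_t\|$ follows from the lemma $\|A_{\mathrm{cl},\infty}^i\|\le\kappa_\infty\tilde\gamma_\infty^i$. Here $\Phi_t := A_{\mathrm{cl},T-1}\cdots A_{\mathrm{cl},t}$ is the finite-horizon optimal closed-loop transition from time $t$ to $T$. It therefore suffices to show $\sup_{t,T}\|\Phi_t\|<\infty$, which gives $\rho_\infty = \tilde\gamma_\infty = \rho(A_{\mathrm{cl},\infty})$ (or any slightly larger number in $(0,1)$).

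For this uniform boundedness, which I expect to be the main obstacle, the plan is as follows. First, by stabilizability, applying the stabilizing gain $K_\infty$ over the finite horizon gives $x^\tr P_t x \le \bar c\|x\|^2$ with $\bar c$ independent of $t,T$, so $\|P_t\|\le \bar c$. Second, take the optimal trajectory $x_{s+1} = A_{\mathrm{cl},s}x_s$ started from $x_t = x$. Its cost satisfies $\sum_{s\ge t}\|Q^{1/2}x_s\|^2 + \|u_s\|_R^2 \le \bar c\|x\|^2$. Third, by detectability of $(Q,A)$, choose $L$ with $A - LQ^{1/2}$ stable and rewrite
\[
x_{s+1} = (A-LQ^{1/2})x_s + L Q^{1/2}x_s + B u_s .
\]
This is a stable system driven by an input whose total $\ell_2$ energy is at most $c\|x\|^2$. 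A Cauchy--Schwarz/convolution bound with the geometric decay of $(A-LQ^{1/2})^k$ then gives $\|x_s\|\le c'\|x\|$ uniformly in $s,t,T$, i.e.\ $\|\Phi_t\|\le c'$. This is exactly where only $Q\succeq0$ plus detectability suffices, instead of $Q\succ0$.

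Finally, the gain bound follows by Lipschitz continuity. We have
\[
K_t - K_\infty = \Sigma_t^{-1}B^\tr P_{t+1}A - \Sigma_\infty^{-1}B^\tr P_\infty A,
\]
and $\Sigma_t^{-1}-\Sigma_\infty^{-1} = \Sigma_t^{-1}B^\tr(P_\infty-P_{t+1})B\Sigma_\infty^{-1}$, with $\Sigma_t,\Sigma_\infty\succeq R\succ0$ and $\|P_{t+1}\|\le\bar c$. Together these give $\|K_t-K_\infty\|\le c''\|P_{t+1}-P_\infty\| = \mathcal{O}(\rho_\infty^{T-t-1}) = \mathcal{O}(\rho_\infty^{T-t})$, absorbing $\rho_\infty^{-1}$ into the constant. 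All constants depend only on $(A,B,Q,R)$.
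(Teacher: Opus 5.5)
Your proof is correct, and it takes a genuinely different route from the paper. The paper gives no argument of its own and simply imports the result from \cite{hager1976convergence}. You instead derive it directly, in three steps.
\begin{enumerate}
\item The identity $\mathcal{R}(X)-\mathcal{R}(Y)=A_{\mathrm{cl}}(Y)^\tr(X-Y)A_{\mathrm{cl}}(X)$ holds: expand both sides and use $(R+B^\tr XB)K(X)=B^\tr XA$.
\item It yields the exact representation $P_t-P_\infty=(A_{\mathrm{cl},\infty}^\tr)^{T-t}(Q-P_\infty)A_{\mathrm{cl},T-1}\cdots A_{\mathrm{cl},t}$.
\item The only nontrivial remaining input is a $T$-uniform bound on the finite-horizon closed-loop transition. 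You get it from stabilizability, which gives $P_t\preceq\bar c I$ and hence finite $\ell_2$ energy of $Q^{1/2}x_s$ and $u_s$. You then use detectability, via output injection and a Cauchy--Schwarz convolution estimate.
\end{enumerate}
This shows exactly where $Q\succeq0$ plus detectability replaces $Q\succ0$. The paper stresses that point but never demonstrates it.

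Your route also gives an explicit rate. You can take $\rho_\infty$ equal to the constant $\tilde\gamma_\infty$ in $\|A_{\mathrm{cl},\infty}^i\|\le\kappa_\infty\tilde\gamma_\infty^i$, so keep your ``or any slightly larger number'' caveat. That constant must exceed $\rho(A_{\mathrm{cl},\infty})$ strictly when an eigenvalue of maximal modulus has a nontrivial Jordan block. It only needs to be positive when that spectral radius is $0$.

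Consequently $\rho_\infty\le\tilde\gamma_\infty<\gamma_\infty$, so in the paper's notation $\lambda_\infty=\gamma_\infty$. With this choice, the case singled out in the footnote to Theorem~\ref{thm:regret-MPC} never arises. The citation, by contrast, supplies only an unspecified rate, which is why the paper carries $\rho_\infty$ and $\gamma_\infty$ separately.

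Your rate is not sharp. Bootstrapping with $A_{\mathrm{cl},t}\to A_{\mathrm{cl},\infty}$ would make $\|A_{\mathrm{cl},T-1}\cdots A_{\mathrm{cl},t}\|$ decay too and push the rate toward $\rho(A_{\mathrm{cl},\infty})^2$. The statement, however, only asks for some $\rho_\infty\in(0,1)$.
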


To facilitate the analysis of time-varying matrices $A_{\mathrm{cl},t}$, we also define the transformed versions $L_{\mathrm{cl},t}:=H_{\mathrm{cl},\infty}^{-1} A_{\mathrm{cl},t}H_{\mathrm{cl},\infty}$.
We next show that $L_{\mathrm{cl},t}$ remain uniformly bounded by $1$ for all but the last $\Delta_{\mathrm{stab}}$ steps of the horizon $T$, where $\Delta_{\mathrm{stab}}$ is independent of $T$.

\begin{lem} \label{lem:bound-L}
    Let ${\gamma}_\infty:=\frac{1}{2}(1+\tilde{\gamma}_{\infty})$ and $\Delta_{\mathrm{stab}}:=\mathcal{O}(\log(1-\tilde{\gamma}_{\infty})^{-1})$, where the hidden constant is independent of $T$. Then it holds that $\|L_{\mathrm{cl},t}\|\leq {\gamma}_\infty<1$ for all $t\leq T_{\mathrm{stab}}:=T-\Delta_{\mathrm{stab}}$.
\end{lem}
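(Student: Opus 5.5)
We compare each $L_{\mathrm{cl},t}$ to $L_{\mathrm{cl},\infty}$ and treat the difference as a perturbation. By definition,
\[
L_{\mathrm{cl},t} = H_{\mathrm{cl},\infty}^{-1}(A - BK_t)H_{\mathrm{cl},\infty} = L_{\mathrm{cl},\infty} - H_{\mathrm{cl},\infty}^{-1}B(K_t - K_\infty)H_{\mathrm{cl},\infty}.
\]
By the triangle inequality and Corollary~\ref{coro:Acl-SS},
\[
\|L_{\mathrm{cl},t}\| \leq \tilde{\gamma}_\infty + \|H_{\mathrm{cl},\infty}^{-1}\|\,\|H_{\mathrm{cl},\infty}\|\,\|B\|\,\|K_t-K_\infty\| \leq \tilde{\gamma}_\infty + \kappa_\infty \|B\|\,\|K_t - K_\infty\|.
\]
It therefore suffices to make the perturbation term at most $\frac{1}{2}(1-\tilde{\gamma}_\infty)$. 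Then $\|L_{\mathrm{cl},t}\| \leq \tilde{\gamma}_\infty + \frac{1}{2}(1-\tilde\gamma_\infty) = \frac12(1+\tilde\gamma_\infty) = \gamma_\infty < 1$.

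Next we invoke Proposition~\ref{prop:riccati-gain-convergence}. It gives a constant $c_K$, depending only on $(A,B,Q,R)$, such that $\|K_t - K_\infty\| \leq c_K \rho_\infty^{T-t}$ for all $t\in[T]$. The required condition becomes
\[
\kappa_\infty \|B\| c_K \rho_\infty^{T-t} \leq \tfrac{1}{2}(1-\tilde{\gamma}_\infty).
\]
Since $\rho_\infty<1$, this holds whenever
\[
T - t \geq \Delta_{\mathrm{stab}} := \left\lceil \frac{\log\!\big(2\kappa_\infty\|B\|c_K\big) + \log(1-\tilde{\gamma}_\infty)^{-1}}{\log(1/\rho_\infty)} \right\rceil_+ .
\]
This is exactly the statement for $t \leq T - \Delta_{\mathrm{stab}}$. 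If $2\kappa_\infty\|B\|c_K \le 1-\tilde\gamma_\infty$ already, we take $\Delta_{\mathrm{stab}}=0$.

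Finally, the constants $\kappa_\infty, c_K, \rho_\infty$ and $\|B\|$ depend only on $(A,B,Q,R)$ and not on $T$. Hence $\Delta_{\mathrm{stab}} = \mathcal{O}(\log(1-\tilde{\gamma}_\infty)^{-1})$, with hidden constant $1/\log(1/\rho_\infty)$ together with an additive term absorbed into the $\mathcal{O}(\cdot)$.

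The main point to check is that Proposition~\ref{prop:riccati-gain-convergence} supplies a $T$-independent prefactor $c_K$ uniformly in $t$. This uniformity is what makes $\Delta_{\mathrm{stab}}$ independent of $T$. The proposition is stated with exactly this uniformity under only $Q\succeq 0$ and detectability. Apart from that, the argument is a direct perturbation bound.
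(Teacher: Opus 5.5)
Your proposal is correct and follows the paper's proof essentially step for step: the same perturbation bound $\|L_{\mathrm{cl},t}\|\le\tilde\gamma_\infty+\kappa_\infty\|B\|\,\|K_t-K_\infty\|$, combined with the $T$-independent exponential gain convergence of Proposition~\ref{prop:riccati-gain-convergence}. Your explicit formula for $\Delta_{\mathrm{stab}}$ spells out the choice the paper leaves implicit, and like the paper you fold the additive constant $\log(2\kappa_\infty\|B\|c_K)/\log(1/\rho_\infty)$ into the $\mathcal{O}(\log(1-\tilde\gamma_\infty)^{-1})$ notation.
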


\begin{proof}
    For any $t\in[T]$, we have $\|L_{\mathrm{cl},t}\|\leq \|L_{\mathrm{cl},\infty}\| + \|L_{\mathrm{cl},t}-L_{\mathrm{cl},\infty}\| \leq \tilde{\gamma}_{\infty}+\kappa_\infty \|A_{\mathrm{cl},t}-A_{\mathrm{cl},\infty}\| \leq \tilde{\gamma}_\infty+\kappa_\infty \|B\| \cdot \|K_t-K_\infty\|.$
    By Proposition~\ref{prop:riccati-gain-convergence}, $\|K_t-K_\infty\|=\mathcal{O}(\rho_\infty^{T-t})$. Thus, there exists a constant $\Delta_{\mathrm{stab}}=\mathcal{O}(\log(1-\tilde{\gamma}_\infty)^{-1})$, independent of $T$, such that the desired result holds.
\end{proof}

To facilitate the MPC analysis, we introduce the following Riccati recursion.
\begin{defn}[Riccati recursion for MPC] \label{def:Riccati-recursion-MPC}
Define the backward Riccati recursion as:
\begin{align*}
    \bar{P}_{W}:=Q,\quad \bar{P}_t &:= {Q} + A^\tr \bar{P}_{t+1} A - A^\tr \bar{P}_{t+1} B \bar{\Sigma}_t^{-1} B^\tr \bar{P}_{t+1} A, \quad \bar{K}_t := \bar{\Sigma}_t^{-1} B^\tr \bar{P}_{t+1} A
\end{align*}
for $t\in[W-1]$ where we define $\bar{\Sigma}_t := R + B^\tr \bar{P}_{t+1} B$.
\end{defn}
Note that the above time indices for MPC correspond to those in Definition~\ref{def:Riccati-recursion-LQT} via $t \mapsto T - W + t - 1$.
That is, $\bar{K}_t = K_{T-W+t-1}$, $\bar{P}_{t+1} = P_{T-W+t}$, and $\bar{\Sigma}_t = \Sigma_{T-W+t-1}$ for all $t \in [W]$.
We also define 
\[
\bar{A}_{\mathrm{cl},t} := A - B \bar{K}_t,\quad \bar{A}_{\mathrm{cl},1\to i-t+1}:=\bar{A}_{\mathrm{cl},i - t + 1} \cdots \bar{A}_{\mathrm{cl},2}
\]
for $t\leq i$ with $t\in[W]$.
Finally, we have the following bound on  $\bar{A}_{\mathrm{cl},1}={A}_{\mathrm{cl},T-W}$.

\begin{lem} \label{lem:MPC-stability}
    For any $W \geq \Delta_{\mathrm{stab}}$, we have $\|\bar{A}_{\mathrm{cl},1}^i\|\leq\kappa_\infty \gamma_\infty^i$.
\end{lem}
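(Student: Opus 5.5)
The plan is to reduce the claim to Lemma~\ref{lem:bound-L} through the index correspondence $\bar{A}_{\mathrm{cl},1} = A_{\mathrm{cl},T-W}$, and then iterate the similarity transform supplied by Corollary~\ref{coro:Acl-SS}.

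First, by Definition~\ref{def:Riccati-recursion-MPC} and the remark after it, $\bar{K}_1 = K_{T-W}$, so $\bar{A}_{\mathrm{cl},1} = A - B K_{T-W} = A_{\mathrm{cl},T-W}$. This matrix depends only on $W$ and not on $T$, because the MPC Riccati recursion starts from $\bar{P}_W = Q$ and runs $W-1$ steps. It is therefore the same matrix as $A_{\mathrm{cl},t}$ at a time index whose distance to the terminal time equals $W$.

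Second, I would check that this index lies in the stable range of Lemma~\ref{lem:bound-L}, namely $t = T-W \le T - \Delta_{\mathrm{stab}} = T_{\mathrm{stab}}$. This is exactly the condition $W \ge \Delta_{\mathrm{stab}}$. The only care needed is with the off-by-one convention in the index shift $t \mapsto T-W+t-1$.

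The main, if small, obstacle is pinning down the threshold. Lemma~\ref{lem:bound-L} is proved via $\|K_t - K_\infty\| = \mathcal{O}(\rho_\infty^{T-t})$, so its real content is the following: whenever the number of remaining Riccati steps $T-t$ is at least $\Delta_{\mathrm{stab}}$, we have $\|L_{\mathrm{cl},t}\| \le \tilde\gamma_\infty + \kappa_\infty\|B\|\,\|K_t - K_\infty\| \le \gamma_\infty$. For $\bar{K}_1$ the number of remaining steps is $W$ (up to the off-by-one), so the same estimate applies. If the exact boundary is delicate, I would absorb a unit shift into the definition of $\Delta_{\mathrm{stab}}$, which is only specified up to $\mathcal{O}(\cdot)$. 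Applying the lemma then gives $\|H_{\mathrm{cl},\infty}^{-1}\bar{A}_{\mathrm{cl},1}H_{\mathrm{cl},\infty}\| = \|L_{\mathrm{cl},T-W}\| \le \gamma_\infty$.

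Third, since $\bar{A}_{\mathrm{cl},1}$ is a fixed matrix, its powers satisfy
$\bar{A}_{\mathrm{cl},1}^i = H_{\mathrm{cl},\infty} L_{\mathrm{cl},T-W}^i H_{\mathrm{cl},\infty}^{-1}$. Hence
$\|\bar{A}_{\mathrm{cl},1}^i\| \le \|H_{\mathrm{cl},\infty}\|\,\|H_{\mathrm{cl},\infty}^{-1}\|\,\|L_{\mathrm{cl},T-W}\|^i \le \kappa_\infty \gamma_\infty^i$, using the bound $\|H_{\mathrm{cl},\infty}\|\,\|H_{\mathrm{cl},\infty}^{-1}\|\le\kappa_\infty$ from Corollary~\ref{coro:Acl-SS}. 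This holds for all $i \ge 0$ and proves the claim.
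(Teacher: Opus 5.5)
Your proposal is correct and follows the paper's proof. You identify $\bar{A}_{\mathrm{cl},1}$ with $A_{\mathrm{cl},T-W}$, invoke Lemma~\ref{lem:bound-L} (since $T-W\le T_{\mathrm{stab}}$ exactly when $W\ge\Delta_{\mathrm{stab}}$) to get $\|L_{\mathrm{cl},T-W}\|\le\gamma_\infty$, and bound the powers via $\bar{A}_{\mathrm{cl},1}^i=H_{\mathrm{cl},\infty}L_{\mathrm{cl},T-W}^iH_{\mathrm{cl},\infty}^{-1}$. Your caution about the index shift is justified: starting from $\bar{P}_W=Q=P_T$ one actually gets $\bar{K}_1=K_{T-W+1}$ rather than $K_{T-W}$, and absorbing that unit shift into $\Delta_{\mathrm{stab}}$, as you propose, resolves it.
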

\begin{proof}
    From Lemma~\ref{lem:bound-L}, it follows that for $W \geq \Delta_{\mathrm{stab}}$ we have $\|L_{\mathrm{cl},T-W}\|\leq{\gamma}_\infty$. Therefore, $\|A_{\mathrm{cl},T-W}^i\|=\|H_{\mathrm{cl},\infty} L_{\mathrm{cl},T-W}^i H_{\mathrm{cl},\infty}^{-1}\|\leq \kappa_\infty{\gamma}_\infty^i$.
\end{proof}

Lemma~\ref{lem:MPC-stability} establishes a stability property of the MPC closed-loop dynamics and plays a key role in proving the boundedness of the MPC state trajectory in Lemma~\ref{lem:bound-MPC-state}.

\section{Proofs of Section~\ref{sec:regret-analysis}} \label{app:regret-proof}

\subsection{Proof of Lemma~\ref{lemma:regret-formula}}

The proof uses the cost difference lemma \citep{kakade2003sample,fazel2018global,zhang2021regret}.

\begin{lem}\label{lem:cost-diff-lemma}
For two policies \( \pi_1, \pi_2 \), the difference of their cost-to-go $V_1$ and $V_2$ is given by
\[
V_1^{\pi_2}(x) - V_1^{\pi_1}(x) = \textstyle\sum_{t=1}^{T} Q_t^{\pi_1}(x_t^{\pi_2}, u_t^{\pi_2}) - V_t^{\pi_1}(x_t^{\pi_2}) \tag{25}
\]
\emph{where $\{x_t^{\pi_2}, u_t^{\pi_2}\}$ are the trajectory generated by starting at initial state $x$ and imposing policy $\pi_2$.}
\end{lem}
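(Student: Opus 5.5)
We prove the cost difference lemma by a telescoping argument along the trajectory generated by $\pi_2$. The notation is as follows. $V_t^{\pi}$ is the cost-to-go of policy $\pi$ from time $t$, and $Q_t^{\pi}(x,u)$ is the cost of applying $u$ at time $t$ and then following $\pi$. By definition,
\[
Q_t^{\pi}(x,u)=\ell_t(x,u)+V_{t+1}^{\pi}(Ax+Bu+w_t) \quad\text{for } t<T,
\qquad
Q_T^{\pi}(x,u)=\ell_T(x,u),
\]
\[
V_t^{\pi}(x)=Q_t^{\pi}\bigl(x,\pi_t(x)\bigr),
\]
where $\ell_t$ is the lifted stage cost. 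Equivalently, we may work in the error coordinates $\tilde x$ with disturbance $w_t$, which changes nothing below. We also adopt the convention $V_{T+1}^{\pi}\equiv 0$.

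\textbf{Step 1 (telescoping).} Let $x_1^{\pi_2}=x$, so that $x_{t+1}^{\pi_2}=Ax_t^{\pi_2}+Bu_t^{\pi_2}(+w_t)$. The cost-to-go of $\pi_2$ is the sum of its realized stage costs:
\[
V_1^{\pi_2}(x)=\sum_{t=1}^T \ell_t\bigl(x_t^{\pi_2},u_t^{\pi_2}\bigr).
\]
Next, rewrite $V_1^{\pi_1}(x)$ as a telescoping sum along the same trajectory, using $V_{T+1}^{\pi_1}=0$:
\[
V_1^{\pi_1}(x_1^{\pi_2})=\sum_{t=1}^T\Bigl(V_t^{\pi_1}(x_t^{\pi_2})-V_{t+1}^{\pi_1}(x_{t+1}^{\pi_2})\Bigr).
\]

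\textbf{Step 2 (identify $Q$-functions).} Subtract the two expressions term by term. For each $t$, the difference is
\[
\ell_t\bigl(x_t^{\pi_2},u_t^{\pi_2}\bigr)+V_{t+1}^{\pi_1}\bigl(x_{t+1}^{\pi_2}\bigr)-V_t^{\pi_1}\bigl(x_t^{\pi_2}\bigr).
\]
Since $x_{t+1}^{\pi_2}$ is obtained by applying $u_t^{\pi_2}$ at $x_t^{\pi_2}$, the first two terms equal $Q_t^{\pi_1}(x_t^{\pi_2},u_t^{\pi_2})$. The $t=T$ term is handled by the terminal definition $Q_T^{\pi_1}=\ell_T$. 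Summing over $t$ gives the claimed identity (25).

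\textbf{Step 3 (route to Lemma~\ref{lemma:regret-formula}).} We will later apply the identity with $\pi_1=\pi^*$ and $\pi_2=\pi^{\textnormal{MPC}}$. Theorem~\ref{thm:opt-LQR-policy} shows that $Q_t^{*}(x,\cdot)$ is a quadratic in $u$ with Hessian $\Sigma_t$ and minimizer $u_t^*$. Expanding this quadratic around its minimizer gives
\[
Q_t^*(x,u)-V_t^*(x)=\|u-u_t^*\|_{\Sigma_t}^2 .
\]

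The only real care point is bookkeeping. The disturbance and target must be held fixed across both policies, so that $V^{\pi_1}$ is evaluated at states generated by $\pi_2$, and the terminal convention must be applied consistently. There is no analytic obstacle.
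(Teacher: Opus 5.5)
Your telescoping argument is correct. It is the standard proof of the cost (performance) difference lemma. The paper does not reprove this lemma but imports it from the literature \citep{kakade2003sample}, and your Step~3 coincides with how the paper then uses it to derive Lemma~\ref{lemma:regret-formula}.

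One cosmetic slip, which the paper shares: the Hessian of $Q_t^*(x,\cdot)$ is $2\Sigma_t$, not $\Sigma_t$. Your identity $Q_t^*(x,u)-V_t^*(x)=\|u-u_t^*\|_{\Sigma_t}^2$ is nonetheless exactly right.
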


\begin{proof}
    By Lemma~\ref{lem:cost-diff-lemma}, the relative cost between the MPC and offline optimal policy is equal to
    \begin{align*}
        J_T^\mathrm{lft}(\mathbf{u}^{\textnormal{MPC}};\mathbf{r})-J_T^\mathrm{lft}(\mathbf{u}^*;\mathbf{r}) &= \textstyle\sum_{t=1}^T {Q}_t^*({x}_t^{\textnormal{MPC}},{u}_t^{\textnormal{MPC}};\mathbf{r}) 
        - {Q}_t^*(x_t^{\textnormal{MPC}},{u}_t^*;\mathbf{r}).
        \label{eq:perf-diff}
    \end{align*}
    By Definition~\ref{def:optimal-value-Q} and Theorem~\ref{thm:opt-LQR-policy}, $Q_t^*(x_t^{\textnormal{MPC}},\cdot;\mathbf{r})$ is strongly convex quadratic and  $u_t^*$ is its minimizer. Therefore, the first-odrer optimality condition implies that
    \[
    Q_t^*(x_t^{\textnormal{MPC}},u_t^{\textnormal{MPC}};\mathbf{r})-Q_t^*(x_t^{\textnormal{MPC}},u_t^*;\mathbf{r})=\|u_t^{\textnormal{MPC}}-u_t^*\|^2_{\nabla_u^2 Q_t^*(x_t^{\textnormal{MPC}},u_t^*;\mathbf{r})}.
    \]
    Finally, the result follows from $\nabla_u^2Q_t^*(x_t^{\textnormal{MPC}},u_t^*;\mathbf{r})=\Sigma_t$.
\end{proof}

\subsection{Proof of Lemma~\ref{lem:STM-K-decay}} \label{subsec:proof-lem3}

We highlight the key proof idea from Lemma~\ref{lem:bound-L}: matrices $L_{\mathrm{cl},t}$ are uniformly bounded by $1$ for all but the final $\Delta_{\mathrm{stab}}$ steps, where $\Delta_{\mathrm{stab}}$ is independent of the horizon $T$. Therefore, any bounded product involving at most $\Delta_{\mathrm{stab}}$ terms near the end of the horizon can be absorbed into the constant in $\mathcal{O}(\cdot)$, ensuring that the overall product still decays exponentially in $t_2 - t_1$.

\begin{proof}
    Since $A_{\mathrm{cl},t} = H_{\mathrm{cl},\infty} L_{\mathrm{cl},t} H_{\mathrm{cl},\infty}^{-1}$, we have
    $
    A_{\mathrm{cl},t_1 \to t_2} = H_{\mathrm{cl},\infty} L_{\mathrm{cl},t_1 \to t_2} H_{\mathrm{cl},\infty}^{-1}
    $
    and therefore
    \[
    \|A_{\mathrm{cl},t_1 \to t_2}\| \leq \|H_{\mathrm{cl},\infty}\| \cdot \|H_{\mathrm{cl},\infty}^{-1}\| \cdot \|L_{\mathrm{cl},t_1 \to t_2}\| 
    \leq \kappa_\infty \|L_{\mathrm{cl},t_1 \to t_2}\|.
    \]
    It remains to bound  $\|L_{\mathrm{cl},t_1 \to t_2}\|$ in the following three cases.
    
    \noindent\textbf{Case 1: \( t_2 \leq T_{\mathrm{stab}} \).}  
    By Lemma~\ref{lem:bound-L}, we have \( \|L_{\mathrm{cl},t}\| \leq {\gamma}_\infty < 1 \) for all \( t \in [t_1+1, t_2] \). Thus
    \[
    \|L_{\mathrm{cl},t_1 \to t_2}\| \leq {\gamma}_\infty^{t_2 - t_1}.
    \]

    \noindent\textbf{Case 2: \( t_1 \leq T_{\mathrm{stab}} < t_2 \).} In this case, decompose the product at \( T_{\mathrm{stab}} \)
    \begin{align*}
         \|L_{\mathrm{cl},t_1\to t_2}\|&\leq \|L_{\mathrm{cl},t_1\to T_{\mathrm{stab}}}\| \cdot \|L_{\mathrm{cl},T_{\mathrm{stab}}\to t_2}\| \\
         &\overset{(a)}{\leq} {\gamma}_\infty^{T_{\mathrm{stab}}-t_1} \|L_{\mathrm{cl},T_{\mathrm{stab}}\to t_2}\|  \\
         &\overset{(b)}{\leq} {\gamma}_\infty^{T_{\mathrm{stab}}-t_1} {\gamma}_\infty^{t_2-T_{\mathrm{stab}}} ({\gamma}_\infty^{-1})^{\Delta_{\mathrm{stab}}} \|L_{\mathrm{cl},T_{\mathrm{stab}}\to t_2}\|\\
         &\overset{(c)}{=}\mathcal{O}({\gamma}_\infty^{t_2-t_1}),
    \end{align*}
    where $(a)$ follows from Lemma~\ref{lem:bound-L}, $(b)$ uses the fact that $t_2-T_{\mathrm{stab}}\leq\Delta_{\mathrm{stab}}$, and $(c)$ absorbs $({\gamma}_\infty^{-1})^{\Delta_{\mathrm{stab}}} \|L_{\mathrm{cl},T_{\mathrm{stab}}\to t_2}\|$ into the constant, since the product involves at most $\Delta_{\mathrm{stab}}$ terms and is uniformly bounded independently of $T$. 
    
    \noindent\textbf{Case 3: \( T_{\mathrm{stab}} < t_1 \leq t_2 \).}  Here we have
    \begin{align*}
         \|L_{\mathrm{cl},t_1\to t_2}\|
         &= {\gamma}_\infty^{t_2-t_1} ({\gamma}_\infty^{-1})^{t_2-t_1}\|L_{\mathrm{cl},t_1 \to t_2}\|\\
         &\overset{(a)}{\leq} {\gamma}_\infty^{t_2-t_1} ({\gamma}_\infty^{-1})^{\Delta_{\mathrm{stab}}}\|L_{\mathrm{cl},t_1 \to t_2}\|\\
         &\overset{(b)}{=} \mathcal{O}({\gamma}_\infty^{t_2-t_1}),
    \end{align*}
    where $(a)$ is due to $t_2-t_1\leq\Delta_{\mathrm{stab}}$ and $(b)$ absorbs $({\gamma}_\infty^{-1})^{\Delta_{\mathrm{stab}}}\|L_{\mathrm{cl},t_1 \to t_2}\|$ into the constant, since this quantity is uniformly bounded by a constant independent of $T$.

    Combining all cases yields the desired bound.
\end{proof}

\subsection{Proof of Proposition~\ref{prop:truncation-err}}

\begin{proof}
    Let \( w_i = A \psi(r_i) - \psi(r_{i+1}) \). By Assumption~\ref{assump:koopman}, \( \|w_i\| \leq D_w := (\|A\| + 1) D_\psi \). Then,
    \begin{align*}
        \textstyle\sum_{t=1}^{T-W} \left\|\textstyle\sum_{i=t+W-1}^{T-1} K_{t\to i} w_i\right\|_{\Sigma_t}^2 \leq& \textstyle\sum_{t=1}^{T-W} \|\Sigma_t\| \cdot \left( \textstyle\sum_{i=t+W-1}^{T-1} \|K_{t\to i}\| \cdot \|w_i\| \right)^2  \\
        \leq& D_w^2 \cdot \max_t \|\Sigma_t\| \cdot \textstyle\sum_{t=1}^{T-W} \left( \sum_{i=t+W-1}^{T-1} \|K_{t\to i}\| \right)^2\\
        \leq& D_w^2 \cdot \left( \|R\| + \|B\|^2 \cdot \|P_\infty\| \right) \cdot \textstyle\sum_{t=1}^{T-W} \left(\textstyle\sum_{i=t+W-1}^{T-1} \mathcal{O}(\gamma_\infty^{i-t})\right)^2,
    \end{align*}
    where the last equality follows from Lemma~\ref{lem:STM-K-decay}.
    For the inner sum, changing the summation index via $j=i-t+1$, we obtain
    \[
    \textstyle\sum_{i=t+W-1}^{T-1} \mathcal{O}(\gamma_\infty^{i - t}) = \mathcal{O}\left(\textstyle\sum_{j=W}^{T-t} \gamma_\infty^j\right) = \mathcal{O}\left( \textstyle\sum_{j=W}^{\infty} \gamma_\infty^j\right) = \mathcal{O}(\gamma_\infty^{W}).
    \]
    Therefore, taking the square and summing over $t$ gives
    \[
    \textstyle\sum_{t=1}^{T-W} \left\|\textstyle\sum_{i=t+W-1}^{T-1} K_{t\to i} w_i\right\|_{\Sigma_t}^2= \mathcal{O}(\gamma_\infty^{2W} T).
    \]
\end{proof}

\subsection{Auxiliary lemmas for Lemma~\ref{lemma:mat-approx-err}} \label{subsec:lem-for-lem4}

To prove Lemma~\ref{lemma:mat-approx-err}, we first introduce two technical lemmas (Lemmas~\ref{lem:X-diff} and \ref{lemma:X-diff-bound}) below for bounding the difference between two intermediate matrices defined as follows:
\begin{align*}
    X_{t \to i} &:= {A}_{\mathrm{cl},t\to i}^\tr P_{i+1} = \left( A_{\mathrm{cl},i} \cdots A_{\mathrm{cl},t+1} \right)^\tr P_{i+1}, \\
    \bar{X}_{1 \to i - t + 1} &:= \bar{A}_{\mathrm{cl},1\to i-t+1}^\tr \bar{P}_{i - t + 2} = \left( \bar{A}_{\mathrm{cl},i - t + 1} \cdots \bar{A}_{\mathrm{cl},2} \right)^\tr \bar{P}_{i - t + 2}.
\end{align*}

The first lemma relates the difference of $\bar{X}_{1 \to i-t+1} - X_{t \to i}$ to $\bar{P}_{j-t+2}-P_{j+1}$ for $t+1\leq j\leq i$.

\begin{lem} \label{lem:X-diff}
    For $t\in [T-W]$ and $t \leq i \leq t+W-2$,
    \begin{align*}
        \bar{X}_{1 \to i-t+1} &- X_{t \to i}=\ {A}_{\mathrm{cl},t\to i}^\tr (\bar{P}_{i-t+2}-P_{i+1}) \\
        +\sum_{j=t+1}^i &{A}_{\mathrm{cl},t\to j}^\tr (P_{T-W+j-t+2}-P_{j+1})^\tr B (R + B^\tr P_{T-W+j-t+2} B)^{-1} B^\tr \bar{X}_{j-t+1 \to i-t+1}.
    \end{align*}
\end{lem}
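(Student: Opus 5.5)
The identity is purely algebraic, so I will prove it by a telescoping (``product-difference'') expansion combined with one resolvent-type identity for the Riccati closed-loop matrices. Throughout I will use the index correspondence after Definition~\ref{def:Riccati-recursion-MPC}: $\bar{K}_k = K_{T-W+k-1}$, $\bar{P}_{k+1} = P_{T-W+k}$, $\bar{\Sigma}_k = \Sigma_{T-W+k-1}$. Under this correspondence the factor $\bar{A}_{\mathrm{cl},j-t+1}$ in $\bar{X}_{1\to i-t+1}$ plays the role of $A_{\mathrm{cl},j}$ in $X_{t\to i}$, for $t+1\le j\le i$. I write $a_j := A_{\mathrm{cl},j}$ and $\bar a_j := \bar{A}_{\mathrm{cl},j-t+1}$ for brevity.

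\textbf{Step 1 (telescoping).} Write $X_{t\to i} = a_{t+1}^\tr\cdots a_i^\tr P_{i+1}$ and $\bar{X}_{1\to i-t+1} = \bar a_{t+1}^\tr\cdots \bar a_i^\tr \bar{P}_{i-t+2}$. Swap the unbarred factors for barred ones one at a time, starting from the left. This gives
\begin{equation*}
\bar{X}_{1\to i-t+1} - X_{t\to i} = A_{\mathrm{cl},t\to i}^\tr(\bar{P}_{i-t+2}-P_{i+1}) + \sum_{j=t+1}^{i} A_{\mathrm{cl},t\to j-1}^\tr\,(\bar a_j^\tr - a_j^\tr)\,\bar{X}_{j-t+1\to i-t+1}.
\end{equation*}
The tail $\bar a_{j+1}^\tr\cdots\bar a_i^\tr\bar P_{i-t+2}$ is exactly $\bar{X}_{j-t+1\to i-t+1}$ by definition, so the first term and the summation structure already match the statement. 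It then remains to rewrite each difference $\bar a_j^\tr - a_j^\tr$ as $a_j^\tr$ times a factor of the form $(\text{difference of }P)\,B(\cdot)^{-1}B^\tr$. That absorbs $a_j^\tr$ into $A_{\mathrm{cl},t\to j}^\tr$.

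\textbf{Step 2 (key identity).} As in the proof of Theorem~\ref{thm:opt-LQR-policy}, $A_{\mathrm{cl}}^\tr = A^\tr S$ with $S = I - PB\Sigma^{-1}B^\tr$ and $\Sigma = R + B^\tr P B$. By the Woodbury identity, $S = (I + PBR^{-1}B^\tr)^{-1}$. Hence for two Riccati matrices $P,\bar P$ with associated $S,\bar S$ we get $\bar S - S = S(S^{-1}-\bar S^{-1})\bar S = S(P-\bar P)BR^{-1}B^\tr \bar S$. Combining this with $B^\tr\bar S = B^\tr - B^\tr\bar PB\bar\Sigma^{-1}B^\tr = R\bar\Sigma^{-1}B^\tr$ yields
\begin{equation*}
\bar a_j^\tr - a_j^\tr = a_j^\tr\,(P-\bar P)\,B\,\bar\Sigma^{-1}B^\tr .
\end{equation*}
Here $P = P_{j+1}$, $\bar P$ is the MPC Riccati matrix used in $\bar K_{j-t+1}$ (expressed as $P_{T-W+\cdot}$), and $\bar\Sigma = R + B^\tr\bar P B$. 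Substituting into Step 1 turns $A_{\mathrm{cl},t\to j-1}^\tr a_j^\tr$ into $A_{\mathrm{cl},t\to j}^\tr$, which gives the stated formula, using the symmetry of $P$ to write the transpose.

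\textbf{Main obstacle.} The only delicate point is bookkeeping, namely matching the exact subscript of the barred Riccati matrix ($P_{T-W+j-t+2}$ in the statement) under the shift $k\mapsto T-W+k-1$, and the overall sign of the $P$-difference. I will fix these by checking the case $i=t+1$ (a single factor) directly against Definitions~\ref{def:Riccati-recursion-LQT} and~\ref{def:Riccati-recursion-MPC}. If the sign convention differs, it is harmless for the subsequent norm bounds in Lemma~\ref{lemma:X-diff-bound}.
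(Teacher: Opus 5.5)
Your Step~1 telescoping and Step~2 per-factor identity are both correct, and the overall route is the paper's: telescope the two products, then express each closed-loop difference through a Riccati-matrix difference. The paper gets the per-factor identity from $\bar K_{j-t+1}-K_j=\bar\Sigma_{j-t+1}^{-1}B^\tr(\bar P_{j-t+2}-P_{j+1})A_{\mathrm{cl},j}$ via $M_1^{-1}-M_2^{-1}=M_1^{-1}(M_2-M_1)M_2^{-1}$. Your $S=(I+PBR^{-1}B^\tr)^{-1}$ and Woodbury argument is an equivalent repackaging.

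What fails is your claim that the substitution ``gives the stated formula.'' Inserting $\bar a_j^\tr-a_j^\tr=a_j^\tr(P_{j+1}-\bar P_{j-t+2})B\bar\Sigma_{j-t+1}^{-1}B^\tr$ into Step~1 gives the factor $P_{j+1}-P_{T-W+j-t+2}$. That is the \emph{negative} of the stated $(P_{T-W+j-t+2}-P_{j+1})^\tr$, and symmetry of $P$ cannot flip a sign.

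Your algebra is right; the printed lemma is wrong. The paper's proof writes $\bar A_{\mathrm{cl},2}^\tr-A_{\mathrm{cl},t+1}^\tr=(\bar K_2-K_{t+1})^\tr B^\tr$, but $(A-B\bar K_2)-(A-BK_{t+1})=-B(\bar K_2-K_{t+1})$. That dropped minus propagates into the statement. Your Step~1 avoids it because you never pass to the gains.

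Here is a concrete check. Take $n_x=n_u=1$, $A=B=Q=R=1$, $T=4$, $W=3$, $t=1$, $i=2$. Then $P_4=\bar P_3=1$, $P_3=3/2$, $A_{\mathrm{cl},2}=2/5$, $\bar A_{\mathrm{cl},2}=1/2$. The left side is $1/2-3/5=-1/10$. The stated right side is $-1/5-1/10=-3/10$. Your version gives $-1/5+1/10=-1/10$.

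So you should state and prove the lemma with the sign of the sum reversed. Then note, as you anticipated, that Lemma~\ref{lemma:X-diff-bound} uses only norms and is unaffected.

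On the index bookkeeping you left open: the shift printed after Definition~\ref{def:Riccati-recursion-MPC} ($\bar P_{k+1}=P_{T-W+k}$, $\bar K_k=K_{T-W+k-1}$) is off by one. It would force $\bar P_W=P_{T-1}$, whereas in fact $\bar P_W=Q=P_T$. The correct matching is $\bar P_k=P_{T-W+k}$, $\bar\Sigma_k=\Sigma_{T-W+k}$, $\bar K_k=K_{T-W+k}$. This is also what the paper's proof actually uses when it writes $\bar K_{j-t+1}=K_{T-W+j-t+1}$.

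Under this matching, the Riccati matrix inside $\bar K_{j-t+1}$ is $\bar P_{j-t+2}=P_{T-W+j-t+2}$, exactly the index in the statement. With the printed shift you would get $P_{T-W+j-t+1}$ and a spurious mismatch, and your proposed single-factor check would fail for that reason. The cleanest write-up keeps everything in barred notation, where your identity reads $(P_{j+1}-\bar P_{j-t+2})B\bar\Sigma_{j-t+1}^{-1}B^\tr$ with no ambiguity, and translates indices only at the end.
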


\begin{proof}
    We first observe the recursive structure
    \[
    X_{t \to i}=A_{\mathrm{cl},t+1}^\tr  X_{t+1 \to i}, \quad \bar{X}_{1 \to i - t + 1}=\bar{A}_{\mathrm{cl},2}^\tr  \bar{X}_{2 \to i-t+1}.
    \]
    Therefore,
    \begin{align*}
        & \bar{X}_{1 \to i-t+1} - X_{t \to i} \\
        =&(\bar{A}_{\mathrm{cl},2}^\tr-A_{\mathrm{cl},t+1}^\tr)  \bar{X}_{2 \to i-t+1}  + A_{\mathrm{cl},t+1}^\tr  (\bar{X}_{2 \to i-t+1}-X_{t+1 \to i})\\
        =&(\bar{K}_2-K_{t+1})^\tr B^\tr \bar{X}_{2 \to i-t+1} + A_{\mathrm{cl},t+1}^\tr  (\bar{X}_{2 \to i-t+1}-X_{t+1 \to i}).
    \end{align*}
    By iterative expansion using the above recursive relation, we obtain
    \begin{align}
        &\bar{X}_{1 \to i-t+1} - X_{t \to i}\notag\\
        =&(\bar{K}_2-K_{t+1})^\tr B^\tr \bar{X}_{2 \to i-t+1} + A_{\mathrm{cl},t+1}^\tr  (\bar{X}_{2 \to i-t+1}-X_{t+1 \to i})\notag\\
        =&(\bar{K}_2-K_{t+1})^\tr B^\tr \bar{X}_{2 \to i-t+1} \notag \\
        &+ A_{\mathrm{cl},t+1}^\tr  (\bar{K}_3-K_{t+2})^\tr B^\tr \bar{X}_{3 \to i-t+1} + A_{\mathrm{cl},t+1}^\tr A_{\mathrm{cl},t+2}^\tr  (\bar{X}_{3 \to i-t+1}-X_{t+2 \to i})\notag\\
        =&{A}_{\mathrm{cl},t\to i}^\tr (\bar{X}_{i-t+1 \to i-t+1}-X_{i \to i}) \!+\! \sum_{j=t+1}^i {A}_{\mathrm{cl},t\to j-1}^\tr (\bar{K}_{j-t+1}-K_{j})^\tr B^\tr \bar{X}_{j-t+1 \to i-t+1}\notag\\
        =&{A}_{\mathrm{cl},t\to i}^\tr (\bar{P}_{i-t+2}-P_{i+1}) + \sum_{j=t+1}^i {A}_{\mathrm{cl},t\to j-1}^\tr (\bar{K}_{j-t+1}-K_{j})^\tr B^\tr \bar{X}_{j-t+1 \to i-t+1} \label{eq:X-diff}
    \end{align}
    where the last equality uses the convention that ${A}_{\mathrm{cl},t\to t}=I$.
    We also have
    \begin{align}
        &\bar{K}_{j-t+1}-K_{j}= {K}_{T-W+j-t+1}-K_{j}\notag\\
        =& (R + B^\tr P_{T-W+j-t+2} B)^{-1} B^\tr P_{T-W+j-t+2} A - (R + B^\tr P_{j+1} B)^{-1} B^\tr P_{j+1} A\notag\\
        =& (R + B^\tr P_{T-W+j-t+2} B)^{-1} B^\tr (P_{T-W+j-t+2}-P_{j+1}) A  \notag\\
        &+ \left((R + B^\tr P_{T-W+j-t+2} B)^{-1}-(R + B^\tr P_{j+1} B)^{-1}\right) B^\tr P_{j+1} A\notag\\
        =& (R + B^\tr P_{T-W+j-t+2} B)^{-1} B^\tr (P_{T-W+j-t+2}-P_{j+1}) A\notag\\
        &- (R + B^\tr P_{T-W+j-t+2} B)^{-1} B^T (P_{T-W+j-t+2}-P_{j+1}) B (R + B^\tr P_{j+1} B)^{-1} B^\tr P_{j+1} A\notag\\
        =&(R + B^\tr P_{T-W+j-t+2} B)^{-1} B^\tr (P_{T-W+j-t+2}-P_{j+1}) (A-BK_{j})\label{eq:K-diff}
    \end{align}
    where the third equality uses the identity 
    \[
    M_1^{-1}-M_2^{-1}=M_1^{-1}(M_2-M_1)M_2^{-1}
    \]
    for two invertible matrices $M_1$ and $M_2$.
    Substituting \eqref{eq:K-diff} into \eqref{eq:X-diff} then completes the proof.
\end{proof}

Based on Lemma~\ref{lem:X-diff}, Lemma~\ref{lem:STM-K-decay} and Proposition~\ref{prop:riccati-gain-convergence}, we obtain the following bound.
\begin{lem}\label{lemma:X-diff-bound}
For $t\in [T-W]$ and $t\leq i \leq {t+W-2}$, we have
\begin{align*}
    \|X_{t \to i} - \bar{X}_{1 \to i-t+1}\| &=\mathcal{O}({\gamma}_\infty^{i-t} \rho_\infty^{W-i+t}).
\end{align*}
\end{lem}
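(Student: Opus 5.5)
We start from the exact identity of Lemma~\ref{lem:X-diff} and bound its two pieces separately, using the triangle inequality together with Lemma~\ref{lem:STM-K-decay} and Proposition~\ref{prop:riccati-gain-convergence}. The basic observation is an index shift. For $t\le j\le i\le t+W-2$ we have $\bar{P}_{j-t+2}=P_{T-W+j-t+1}$, so the Riccati difference $P_{T-W+j-t+1}-P_{j+1}$ compares the recursion at two indices whose distances to the terminal time are $W-(j-t+1)$ and $T-j-1$. Since $t\le T-W$, the second distance is at least the first. Writing both as deviations from $P_\infty$, Proposition~\ref{prop:riccati-gain-convergence} gives
\[
\|\bar{P}_{j-t+2}-P_{j+1}\|\le \|P_{T-W+j-t+1}-P_\infty\|+\|P_{j+1}-P_\infty\|=\mathcal{O}(\rho_\infty^{W-j+t}).
\]
The same argument applies to the $P$-difference appearing in the sum of Lemma~\ref{lem:X-diff}, up to a shift of the index by one. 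Such shifts only cost a constant factor $\rho_\infty^{-1}$, which is independent of $T$ and $W$.

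\textbf{First term.} By Lemma~\ref{lem:STM-K-decay}, $\|A_{\mathrm{cl},t\to i}\|=\mathcal{O}(\gamma_\infty^{i-t})$. The Riccati difference at $j=i$ is $\mathcal{O}(\rho_\infty^{W-i+t})$. Hence
\[
\bigl\|A_{\mathrm{cl},t\to i}^\tr(\bar{P}_{i-t+2}-P_{i+1})\bigr\|=\mathcal{O}(\gamma_\infty^{i-t}\rho_\infty^{W-i+t}),
\]
which is already the target rate.

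\textbf{Sum term.} For the $j$-th summand, $(R+B^\tr P B)^{-1}$ is bounded by $\|R^{-1}\|$, and $\|B\|$ is a constant. The prefix satisfies $\|A_{\mathrm{cl},t\to j}\|=\mathcal{O}(\gamma_\infty^{j-t})$ by Lemma~\ref{lem:STM-K-decay}, and the $P$-difference is $\mathcal{O}(\rho_\infty^{W-j+t})$. It remains to bound the tail factor $\bar{X}_{j-t+1\to i-t+1}=\bar{A}_{\mathrm{cl},j-t+1\to i-t+1}^\tr\bar{P}_{i-t+2}$. This is an MPC-recursion object, so I translate it back through $\bar{K}_k=K_{T-W+k-1}$: the product $\bar{A}_{\mathrm{cl},j-t+1\to i-t+1}$ equals $A_{\mathrm{cl},T-W+j-t\to T-W+i-t}$. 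Lemma~\ref{lem:STM-K-decay} applies to it, since it holds for all index pairs below $T$, including the final $\Delta_{\mathrm{stab}}$ steps. This gives norm $\mathcal{O}(\gamma_\infty^{i-j})$. The factor $\bar{P}_{i-t+2}$ is uniformly bounded because $P_t\to P_\infty$. Each summand is therefore
\[
\mathcal{O}(\gamma_\infty^{j-t}\,\rho_\infty^{W-j+t}\,\gamma_\infty^{i-j})=\mathcal{O}(\gamma_\infty^{i-t}\rho_\infty^{W-j+t}).
\]

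\textbf{Main obstacle.} The difficulty is summing over $j$ without losing the rate. Since $j\le i$, we have $\rho_\infty^{W-j+t}=\rho_\infty^{W-i+t}\rho_\infty^{i-j}$. The sum is then
\[
\gamma_\infty^{i-t}\rho_\infty^{W-i+t}\sum_{j\le i}\rho_\infty^{i-j}\le \frac{\gamma_\infty^{i-t}\rho_\infty^{W-i+t}}{1-\rho_\infty}.
\]
This is a geometric series with constant independent of $W$ and $T$, so no factor of $W$ appears. Combining it with the first term gives the claimed bound $\mathcal{O}(\gamma_\infty^{i-t}\rho_\infty^{W-i+t})$.

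Two things need care along the way. First, the index bookkeeping (including the $\pm1$ shifts between Lemma~\ref{lem:X-diff}'s indices and $\bar{P}$) must only produce constant factors. Second, the hidden constants from Lemma~\ref{lem:STM-K-decay} must be uniform in the starting index. If the reindexed MPC products were problematic, the fallback is to bound them directly with Lemma~\ref{lem:bound-L}, absorbing at most $\Delta_{\mathrm{stab}}$ factors into the constant.
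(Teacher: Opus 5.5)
Your proposal is correct and matches the paper's proof essentially step for step. Like the paper, you apply the triangle inequality to the identity of Lemma~\ref{lem:X-diff}, bound $\|A_{\mathrm{cl},t\to j}\|$ and $\|\bar{X}_{j-t+1\to i-t+1}\|$ via Lemma~\ref{lem:STM-K-decay}, bound the Riccati differences by $\mathcal{O}(\rho_\infty^{W-j+t})$ through $P_\infty$ using Proposition~\ref{prop:riccati-gain-convergence}, and close with the geometric sum $\sum_{j\le i}\rho_\infty^{W-j+t}=\mathcal{O}(\rho_\infty^{W-i+t})$. Your remark that off-by-one index shifts between $\bar{P}$ and $P$ only cost constant factors is also worth keeping, since the paper's own index bookkeeping there is not perfectly consistent.
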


\begin{proof}
    From the expression in Lemma~\ref{lem:X-diff}, we have
    \begin{align*}
        &\|\bar{X}_{1 \to i-t+1} - X_{t \to i}\|
        \leq \|{A}_{\mathrm{cl},t\to i}^\tr \| \cdot \|\bar{P}_{i-t+2}-P_{i+1}\| \\
        + &\sum_{j=t+1}^i \! \| {A}_{\mathrm{cl},t\to j}^\tr\| \!\cdot\! \|P_{T-W+j-t+2}\!-\!P_{j+1}\| \!\cdot\! \|B (R \!+\! \!B^\tr\! P_{T-W+j-t+2} B)^{-1} B^\tr\| \!\cdot\! \|\bar{X}_{j-t+1 \to i-t+1}\|.
    \end{align*}
    First by Lemma~\ref{lem:STM-K-decay},
    \begin{equation} \label{eq:bound-bar-X}
        \|\bar{X}_{j-t+1 \to i-t+1}\| = \|\left( \bar{A}_{\mathrm{cl},i - t + 1} \cdots \bar{A}_{\mathrm{cl},j-t+2} \right)^\tr \bar{P}_{i - t + 2}\| = \mathcal{O}({\gamma}_\infty^{i-j}).
    \end{equation}
    Also by Proposition~\ref{prop:riccati-gain-convergence}, we have for $t\in [T-W]$ and $t+1\leq j < {t+W-1}$,
    \begin{align}
        \|P_{T-W+j-t+2}-P_{j+1}\|
        &\leq \|P_{T-W+j-t+2} - P_{\infty} \| + \|{P}_\infty - P_{j+1} \| \notag \\
        &= \mathcal{O}(\rho_\infty^{W-j+t-2})+\mathcal{O}(\rho_\infty^{T-j-1})\notag \\
        &= \mathcal{O}(\rho^{\min\{W-j+t-2,T-j-1\}}) \notag\\
        &=\mathcal{O}(\rho_\infty^{W-j+t}). \label{eq:bound-P-diff}
    \end{align}
    Therefore,
    \begin{align*}
        \|\bar{X}_{1 \to i-t+1} - X_{t \to i}\|&\leq  \mathcal{O}({\gamma}_\infty^{i-t}) \cdot \mathcal{O}(\rho_\infty^{W-i+t}) \\
        &\qquad \qquad + \textstyle\sum_{j=t+1}^i \mathcal{O}({\gamma}_\infty^{j-t}) \cdot \mathcal{O}(\rho_\infty^{W-j+t}) \cdot \|B\|^2 \cdot \|R^{-1}\| \cdot \mathcal{O}({\gamma}_\infty^{i-j})\\
        &= \mathcal{O}({\gamma}_\infty^{i-t} \rho_\infty^{W-i+t}) + \mathcal{O}\left({\gamma}_\infty^{i-t} \cdot \textstyle\sum_{j=t+1}^i \rho_\infty^{W-j+t} \right)\\
        &= \mathcal{O}({\gamma}_\infty^{i-t} \rho_\infty^{W-i+t})
    \end{align*}
    where the first inequality follows from \eqref{eq:bound-bar-X} and \eqref{eq:bound-P-diff}.
\end{proof}

\subsection{Proof of Lemma~\ref{lemma:mat-approx-err}} \label{subsec:proof-lem4}

\begin{proof}
The proof is based on Lemma~\ref{lemma:X-diff-bound}. Recall the feedforward gains 
\begin{align*}
    {K_{t\to i}} &= \Sigma_t^{-1}\! B^\tr (A_{\mathrm{cl},i} \cdots A_{\mathrm{cl},t+1})^\tr P_{i+1},\\
    \bar{K}_{1\to i-t+1} &= \bar{\Sigma}_1^{-1}B^\tr (\bar{A}_{\mathrm{cl},i-t+1}\cdots \bar{A}_{\mathrm{cl},2})^\tr \bar{P}_{i-t+2}
\end{align*}
for $t\in [T-W]$ and $t\leq i \leq {t+W-2}$. We can rewrite their difference as
\begin{align}
    K_{t \to i} - \bar{K}_{1 \to i-t+1} 
    &= \Sigma_t^{-1} B^\tr X_{t \to i} - \bar{\Sigma}_1^{-1} B^\tr \bar{X}_{1 \to i-t+1} \notag \\
    &= \Sigma_t^{-1}(B^\tr X_{t \to i} - B^\tr \bar{X}_{1 \to i-t+1}) + (\Sigma_t^{-1} - \bar{\Sigma}_1^{-1}) B^\tr \bar{X}_{1 \to i-t+1}. \label{eq:K-ff-diff}
\end{align}
Using Lemma~\ref{lemma:X-diff-bound}, the first term in \eqref{eq:K-ff-diff} is bounded as
\begin{align}
    \|\Sigma_t^{-1}(B^\tr X_{t \to i} - B^\tr \bar{X}_{1 \to i-t+1})\| 
    &\leq \|R^{-1}\| \cdot \|B\| \cdot \|X_{t \to i} - \bar{X}_{1 \to i-t+1}\| \notag \\
    &=\mathcal{O}({\gamma}_\infty^{i-t} \rho_\infty^{W-i+t}) \label{eq:first-term}
\end{align}
For the second term in \eqref{eq:K-ff-diff}, by the identity $\Sigma_t^{-1} - \bar{\Sigma}_1^{-1} = \Sigma_t^{-1} (\bar{\Sigma}_1 - \Sigma_t) \bar{\Sigma}_1^{-1}$,
\begin{align}
    \|(\Sigma_t^{-1} - \bar{\Sigma}_1^{-1}) B^\tr \bar{X}_{1 \to i-t+1}\|
    &\leq \|\Sigma_t^{-1}\| \cdot \|\bar{\Sigma}_1 - \Sigma_t\| \cdot \|\bar{\Sigma}_1^{-1}\| \cdot \|B\| \cdot \|\bar{X}_{1 \to i-t+1}\| \notag  \\
    &\leq \|B\|^3 \cdot \|R^{-1}\|^2 \cdot  \|\bar{P}_2 - P_{t+1}\| \cdot \|\bar{X}_{1 \to i-t+1}\|\notag \\
    &= \mathcal{O}(\rho_\infty^{W}\gamma_\infty^{i-t+1}), \label{eq:second-term}
\end{align}
where the last equality follows by arguments analogous to those in \eqref{eq:bound-P-diff} and \eqref{eq:bound-bar-X}.
Combining \eqref{eq:first-term} and \eqref{eq:second-term}, and applying Lemma~\ref{lemma:X-diff-bound}, we obtain
\begin{align*}
    \|K_{t \to i} - \bar{K}_{1 \to i-t+1}\|&= \mathcal{O}({\gamma}_\infty^{i-t} \rho_\infty^{W-i+t}) + \mathcal{O}(\rho_\infty^{W} {\gamma}_\infty^{i-t+1}) =\mathcal{O}({\gamma}_\infty^{i-t} \rho_\infty^{W-i+t}).
\end{align*}

Finally, one can verify that $K_t - \bar{K}_1 = (K_{t \to t} - \bar{K}_{1 \to 1}) A$ and thus 
\[
\|K_t - \bar{K}_1\| \leq \|K_{t \to t} - \bar{K}_{1 \to 1}\| \cdot \|A\| = \mathcal{O}(\rho_\infty^W).
\]
\end{proof}

\vspace{-3mm}
\subsection{Proof of Lemma~\ref{lem:bound-MPC-state}} \label{subsec:proof-lem5}

To prove Lemma~\ref{lem:bound-MPC-state}, we first introduce a lemma.
\begin{lem} \label{lem:MPC-state}
Consider the MPC policy given in Theorem~\ref{thm:opt-MPC}. For $t\in[T-W]$, we have
\[
u_t^{\textnormal{MPC}} = -\bar{K}_1 \tilde{x}_t - q_t(\mathbf{w}), \quad q_t(\mathbf{w}) := \textstyle\sum_{i=t}^{t + W - 2} \bar{K}_{1 \to i - t + 1} w_i,
\]
where $\tilde{x}_t=x_t-\psi(r_t)$ and $w_t=A\psi(r_t)-\psi(r_{t+1})$. Then,
\begin{itemize}[leftmargin=2em, itemsep=1pt]
    \item[\textnormal{(i)}] The state trajectory $\tilde{x}_t$ for $t\in[T-W]$ can be expressed as
    \[
    \tilde{x}_{t+1}(\mathbf{w}) = \textstyle\sum_{i=1}^t \bar{A}_{\mathrm{cl},1}^{t-i} w_i - \textstyle\sum_{i=1}^t \bar{A}_{\mathrm{cl},1}^{t-i} B q_i(\mathbf{w}).
    \]
    \item[\textnormal{(ii)}] 
    The feedforward term is uniformly bounded for all \( t\in[T-W] \) as 
    \[
    \|q_t(\mathbf{w})\| \leq D_q:=\|R^{-1}\| \cdot \|B\|\cdot \|P_\infty\| \cdot D_w \cdot (1-{\gamma}_\infty)^{-1}.
    \]
\end{itemize}
\end{lem}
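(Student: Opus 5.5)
The plan is to prove the two parts separately. Part (i) is a direct unrolling of the closed-loop recursion. Part (ii) is a norm bound that uses the decay of the MPC closed-loop products.

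\textbf{Part (i).} In the error coordinates \eqref{eq:tilde-x-and-w} the lifted dynamics read $\tilde{x}_{t+1}=A\tilde{x}_t+Bu_t+w_t$. Substituting $u_t^{\textnormal{MPC}}=-\bar{K}_1\tilde{x}_t-q_t(\mathbf{w})$, which is just \eqref{eq:MPC-LQT} rewritten with Theorem~\ref{thm:opt-MPC}(i), gives the time-invariant recursion
\[
\tilde{x}_{t+1}=\bar{A}_{\mathrm{cl},1}\tilde{x}_t+w_t-Bq_t(\mathbf{w}),\qquad \bar{A}_{\mathrm{cl},1}=A-B\bar{K}_1 .
\]
Unrolling this from $t$ down to $1$ by induction yields the stated convolution sum. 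The claimed formula has no term $\bar{A}_{\mathrm{cl},1}^{t}\tilde{x}_1$, so I would either take $\tilde{x}_1=0$ as the convention implicit in the statement or carry that extra term along. It is harmless for Lemma~\ref{lem:bound-MPC-state}, because $\|\bar{A}_{\mathrm{cl},1}^t\|\le\kappa_\infty\gamma_\infty^t$ by Lemma~\ref{lem:MPC-stability}.

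\textbf{Part (ii).} By the triangle inequality, $\|q_t\|\le\sum_{k=1}^{W-1}\|\bar{K}_{1\to k}\|\,\|w_{t+k-1}\|$, and $\|w_i\|\le D_w=(\|A\|+1)D_\psi$ by Assumption~\ref{assump:koopman}. Next I write $\bar{K}_{1\to k}=\bar{\Sigma}_1^{-1}B^\tr\bar{A}_{\mathrm{cl},1\to k}^\tr\bar{P}_{k+1}$ and bound each factor:
\begin{itemize}
\item $\|\bar{\Sigma}_1^{-1}\|\le\|R^{-1}\|$, since $\bar{\Sigma}_1\succeq R$;
\item $\|\bar{P}_{k+1}\|\le\|P_\infty\|$, using monotonicity of the finite-horizon Riccati iterates started at $Q$, which are increasing toward $P_\infty$ (or up to an absorbed constant via Proposition~\ref{prop:riccati-gain-convergence});
\item $\|\bar{A}_{\mathrm{cl},1\to k}\|\lesssim\gamma_\infty^{k-1}$, since by the index map $\bar{A}_{\mathrm{cl},1\to k}=A_{\mathrm{cl},T-W\to T-W+k-1}$, to which Lemma~\ref{lem:STM-K-decay} applies.
\end{itemize}
Summing the geometric series gives $\|q_t\|\le\|R^{-1}\|\|B\|\|P_\infty\|D_w(1-\gamma_\infty)^{-1}$, up to constants.

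\textbf{Main obstacle.} The delicate step is the product bound $\|\bar{A}_{\mathrm{cl},1\to k}\|\le\gamma_\infty^{k-1}$ with the exact constant appearing in $D_q$. The factors near the end of the MPC window have indices beyond $T_{\mathrm{stab}}$, so Lemma~\ref{lem:bound-L} does not bound them by $\gamma_\infty$. First I would try to obtain the exact constant through the identity $\bar{X}_{1\to k}=\bar{A}_{\mathrm{cl},1\to k}^\tr\bar{P}_{k+1}$, viewed as the sensitivity of a finite-horizon cost-to-go. If that fails, I would fall back on the $\mathcal{O}(\cdot)$ bound of Lemma~\ref{lem:STM-K-decay} and accept a $T$-independent constant $\kappa$ in $D_q$. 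That weaker constant is all Lemma~\ref{lem:bound-MPC-state} needs, so stating $D_q$ up to such a constant suffices.
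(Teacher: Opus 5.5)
Your proposal is correct and follows essentially the same route as the paper. For (i), you unroll $\tilde{x}_{t+1}=\bar{A}_{\mathrm{cl},1}\tilde{x}_t+w_t-Bq_t(\mathbf{w})$; for (ii), you use the triangle inequality with $\|\bar{\Sigma}_1^{-1}\|\le\|R^{-1}\|$ and $\|\bar{P}_{k+1}\|\le\|P_\infty\|$, then Lemma~\ref{lem:STM-K-decay} on the closed-loop products and a geometric sum.

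The two caveats you raise are genuine, and the paper's proof silently glosses over both of them. The first is the missing $\bar{A}_{\mathrm{cl},1}^{t}\tilde{x}_1$ term. The second is that Lemma~\ref{lem:STM-K-decay} only gives $D_q$ up to a $T$-independent multiplicative constant. Neither affects Lemma~\ref{lem:bound-MPC-state}.
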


\begin{proof}
    The first statement directly follows from recursive substitution. To bound $\|q_t(\mathbf{w})\|$, recall that 
    \begin{align*}
        \bar{K}_{1\to i-t+1} &= {K}_{T-W\to T-W+i-t+1} \\
        &= \Sigma_{T-W}^{-1} B^\tr ({A}_{\mathrm{cl},T-W+i-t+1} \cdots {A}_{\mathrm{cl},T-W+1})^\tr P_{T-W+i-t+2}
    \end{align*}
    defined for $t\leq i\leq t+W-2$ with $t\in[T-W]$. Therefore, by Lemma~\ref{lem:STM-K-decay},
    \begin{align*}
        \|q_t(\mathbf{w})\| &= \left\|\textstyle\sum_{i=t}^{t + W - 2} \bar{K}_{1 \to i - t + 1} w_i\right\|\\
        &\leq \|\Sigma_{T-W}^{-1}\| \cdot \|B\|\cdot \max_{i}\|P_{T-W+i-t+2}\| \cdot D_w \cdot \textstyle\sum_{i=t}^{t+W-2}\|{A}_{\mathrm{cl},T-W\to T-W+i-t+1}\|\\
        &\leq \|R^{-1}\| \cdot \|B\|\cdot \|P_\infty\| \cdot D_w \cdot \textstyle\sum_{i=t}^{t+W-2} \mathcal{O}({\gamma}_\infty^{i-t+1}) \leq D_q.
    \end{align*}
\end{proof}

\begin{proof}[Lemma~\ref{lem:bound-MPC-state}]    
    We bound the norm of the lifted MPC trajectory as follows:
    \begin{align*}
        \|\tilde{x}_{t+1}(\mathbf{w})\| &\overset{(a)}{=} \left\|\textstyle\sum_{i=1}^t \bar{A}_{\mathrm{cl},1}^{t-i} w_i - \sum_{i=1}^t \bar{A}_{\mathrm{cl},1}^{t-i} B q_i(\mathbf{w})\right\|\\
        &\overset{(b)}{\leq} \kappa_\infty \left(1+\|A\|+\max_{i}\|q_i(\mathbf{w})\|\right)\cdot  \textstyle\sum_{i=1}^t {\gamma}_\infty^{t-i}\\
        &\overset{(c)}{\leq} \kappa_\infty (1+\|A\|+D_q) \cdot (1-{\gamma}_\infty)^{-1}:=D_x,
    \end{align*}
    where $(a)$ follows from Lemma~\ref{lem:MPC-state}, $(b)$ applies Lemma~\ref{lem:MPC-stability}, which ensures that for any $W \geq \Delta_{\mathrm{stab}}$ the decay bound $\|\bar{A}_{\mathrm{cl},1}^{t-i}\|\leq\kappa_\infty \gamma_\infty^{t-i}$ holds, and $(c)$ uses again Lemma~\ref{lem:MPC-state}.
\end{proof}

\subsection{Proof of Proposition~\ref{prop:gain-deviation}}\label{app:proof-improved-bound}
\begin{proof}
We first bound the feedback gain deviation. From Lemma~\ref{lem:bound-MPC-state}, we have
    \begin{align*}
    \textstyle\sum_{t=1}^{T-W} \left\|(K_t - \bar{K}_1)(x_t - \psi(r_t))\right\|_{\Sigma_t}^2 
    &\leq D_x^2 \cdot \max_t \|\Sigma_t\|  \cdot \textstyle\sum_{t=1}^{T-W} \|K_t - \bar{K}_1\|^2 = \mathcal{O}(\rho_\infty^{2W} T),
    \end{align*}
    where we use the convergence bound \( \|K_t - \bar{K}_1\| = \mathcal{O}(\rho_\infty^W) \) from Proposition~\ref{prop:riccati-gain-convergence}.

We next bound the feedforward gain deviation. Since \( \|w_i\| \leq D_w \), we get
\begin{align*}
    &\textstyle\sum_{t=1}^T \left\|\textstyle\sum_{i=t}^{t+W-2} (K_{t\to i} - \bar{K}_{1\to i-t+1}) w_i\right\|_{\Sigma_t}^2 \\
    \leq& D_w^2 \cdot \max_t \|\Sigma_t\| \cdot \textstyle\sum_{t=1}^T \left( \sum_{i=t}^{t+W-2} \|K_{t\to i} - \bar{K}_{1\to i - t + 1}\| \right)^2 \\
    \leq& D_w^2 \cdot \max_t \|\Sigma_t\| \cdot \textstyle\sum_{t=1}^T \left( \textstyle\sum_{i=t}^{t+W-2} \mathcal{O}({\gamma}_\infty^{i-t} \rho_\infty^{W-i+t}) \right)^2 \\
    \leq& \mathcal{O}(1) \cdot \textstyle\sum_{t=1}^T \left( \textstyle\sum_{j=0}^{W-2} \mathcal{O}(\gamma_\infty^j \rho_\infty^{W - j}) \right)^2 \\
    \leq& \mathcal{O}(1) \cdot \textstyle\sum_{t=1}^T \left( \sum_{j=0}^{W-2} \mathcal{O}(\lambda_\infty^W) \right)^2 = \mathcal{O}(W^2 \lambda_\infty^{2W} T),
\end{align*}
where $\lambda_\infty := \max\{\gamma_\infty, \rho_\infty\}$, and we use the bound from Lemma~\ref{lemma:mat-approx-err} in the second inequality.
Note that when $\gamma_\infty < \rho_\infty$, we have
$
\textstyle\sum_{j=0}^{W-2} \mathcal{O}\left( ( \gamma_\infty/{\rho_\infty})^j \right) = \mathcal{O}\left((1-{\gamma_\infty}/{\rho_\infty})^{-1}\right) = \mathcal{O}(1)
$
and hence the bound can be improved to $\mathcal{O}(\rho_\infty^{2W}T)$.
\end{proof}

\vspace{-3mm}

\section{Additional Experiments: Two-wheeled Robots}
\label{app:exp-details}
We here illustrate that the proposed online controller \eqref{eq:DDPC} can be adapted to work for nonlinear systems that are not Koopman-linearizable. We consider a two-wheeled mobile robot with nonlinear kinematic dynamics \citep{li2019online}:
\vspace{-1mm}
\begin{align*}
    z_{x, t+1}  &= z_{x, t} + \Delta t \cdot \cos(z_{\delta, t}) \cdot v_{t}, \\ 
    z_{y, t+1} &= z_{y, t} + \Delta t \cdot \sin(z_{\delta, t}) \cdot v_{t}, \\
    z_{\delta, t+1}  &= z_{\delta, t} + \Delta t \cdot w_t,
\end{align*} 
where $(z_x, z_y)$ denotes the robot's position, $v$ and $w$ represent tangential and angular velocities, $z_\delta$ is the heading angle (relative to the $x$-axis), and $\Delta t = 0.025\, \mathrm{s}$.
The objective is to track a heart-shaped reference trajectory $(\mathbf{r}_x, \mathbf{r}_y, \mathbf{r}_\delta)$ given below while minimizing control effort:
\[
\begin{aligned}
r_{x,t} & = 16 \sin^3(t-6), \\
r_{y,t} & = 13\cos(t) - 5\cos(2t-12)-2\cos(3t-18)-\cos(4t-24), \\
r_{\delta, t} & = \mathrm{arctan}\left(\frac{r_{y, t+1}-r_{y, t}}{r_{x, t+1}-r_{x, t}}\right).
\end{aligned}
\]
We set the cost matrices
$
\tiny
Q_z = \begin{bmatrix}
    1 & 0 & 0 \\ 0 & 1 & 0 \\ 0 & 0 & 2
\end{bmatrix}$
and 
$\tiny R = 1.3\times 10^{-3} I_2.$
As this nonlinear model is generally not Koopman-linearizable, we introduce a regularization term for implicit model selection and a slack variable for numerical stability \citep{dorfler2022bridging}. The regularized DDPC then becomes
\begin{equation*} \label{eq:reg-DDPC}
\begin{aligned}
     \min_{u_{1:W|t}, z_{1:W|t}, g, \sigma_z} & \quad \textstyle\sum_{i=1}^{W}  \|z_{i|t} - r_{t+i-1}\|_{Q_z}^2 + \|u_{i|t}\|_R^2 + \lambda_g \|g\|_1 + \lambda_z \|\sigma_z\|_2^2  \\
     \mathrm{s.t.} & \quad H_\D g = [
        \mathbf{u}_{\ini}^\tr, u_{1:W|t}^\tr, (\mathbf{z}_{\ini}+\sigma_z)^\tr, z_{1:W|t}^\tr
        ]^\tr.
     \end{aligned} \tag{Reg-DDPC}
\end{equation*}
When the data library $H_\D$ is collected from a nonlinear system that is not Koopman-linearizable, it typically has full row rank, rendering the equality constraint ineffective.
To address this, we employ the $l_1$-norm regularizer $\|g\|_1$ to encourage the optimization to select more ``important" trajectories for prediction.
This serves as a convex relaxation of the $l_0$-norm to reduce model complexity.

In addition, modeling error may cause the initial trajectory to fall outside the range space of $H_\D$.
To maintain feasibility, we introduce a slack variable $\sigma_z$ and penalize large deviations from the initial trajectory through a relatively large weight $\lambda_z$. In our simulation, we set $\lambda_g = 2$ and $\lambda_z = 3\times 10^6$.
Also note that for general nonlinear systems, the collected data captures only the local dynamics over the data collection region.
Therefore, the data-driven constraint in \eqref{eq:reg-DDPC} can be viewed as a local linear approximation.
Thus, to improve prediction accuracy and tracking performance, our strategy is to switch the data library based on the robot's current state.

\begin{figure}[t]
    \centering
    \includegraphics[width=0.31\textwidth]{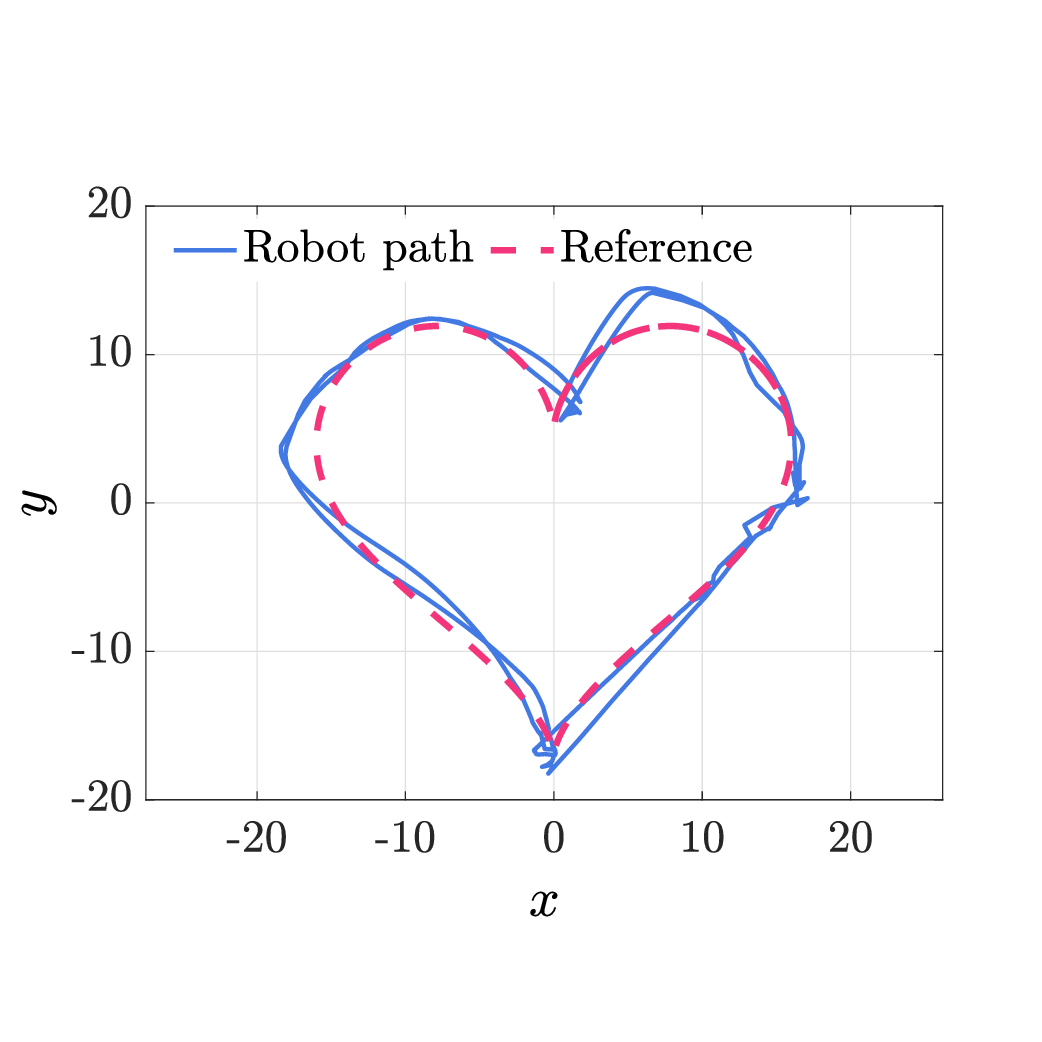} \label{subfig:Nl_W=6} \hspace{1mm}
    \includegraphics[width=0.31\textwidth]{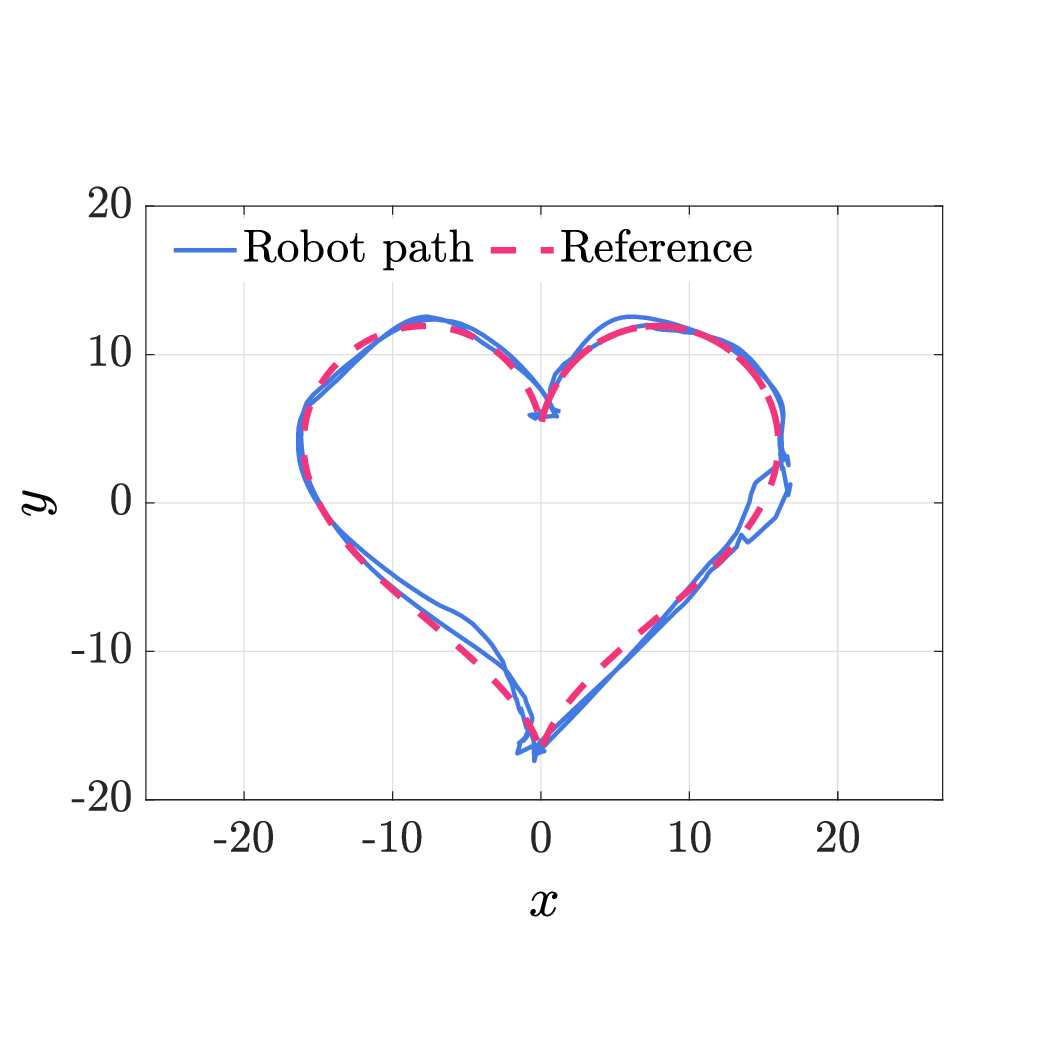} \label{subfig:Nl_W=9} \hspace{1mm}
    \includegraphics[width=0.31\textwidth]{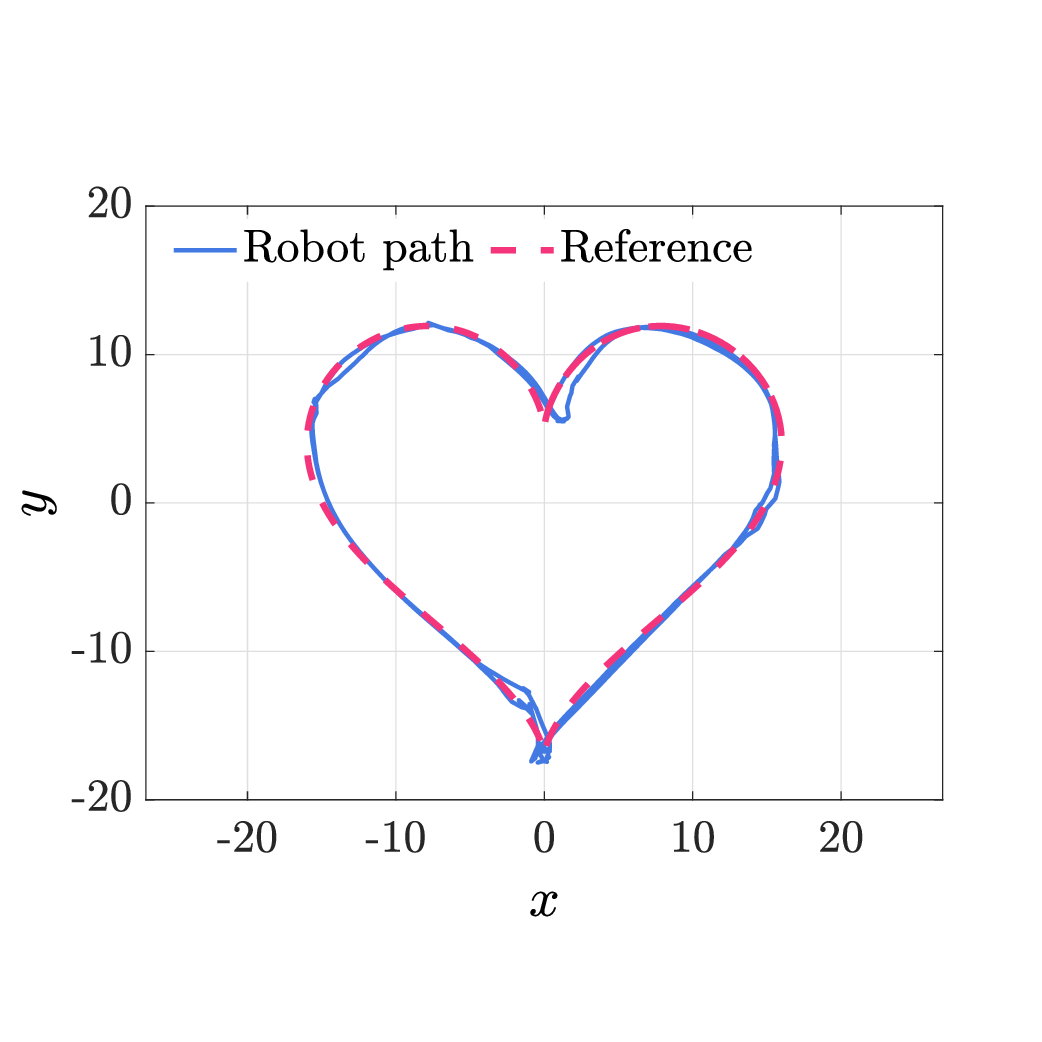} \label{subfig:Nl_W=12}
    \vspace{-2mm}
    \caption{Tracking performance of the two-wheel robot under varying prediction horizons: $W=6$ (left), $W=9$ (middle), and $W=12$ (right). The red curve denotes the reference trajectory, and the blue curve shows the robot's actual path.}
    \vspace{-2mm}
    \label{fig:tracking_nonlinear}
\end{figure}

Since the nonlinearity of the two-wheeled robot mainly arises from the coupled terms $\sin(z_{\delta}) \cdot v$ and $\cos(z_{\delta}) \cdot v$ in the dynamics of $z_x$ and $z_y$, the switching strategy is based on the robot's orientation.
Specifically, during offline data collection, we collect $4$ trajectories of length $1500$ with initial states $(0, 0, \frac{\pi}{4}), (0, 0, \frac{3\pi}{4}), (0, 0, \frac{5\pi}{4})$ and $(0, 0, \frac{7\pi}{4})$.
The input signals $v$ and $w$ are uniformly sampled from $[10, 20]$ and $[-\frac{\pi}{6}, \frac{\pi}{6}]$, respectively.
For each prediction horizon $W$, we construct $4$ corresponding data libraries $H_\D$ with $T_\ini = 5$, each approximately representing the local behavior of the nonlinear system within orientation intervals $[ 0, \frac{\pi}{2}),[ \frac{\pi}{2}, \pi), [\pi, \frac{3\pi}{2})$, and $[ \frac{3\pi}{2}, 2\pi)$. During online tracking, the data library associated with the robot’s current orientation is selected to improve tracking performance.

The results of tracking the heart-shaped curve over two cycles with prediction horizons $W=6$, $9$, $12$ are shown in Figure~\ref{fig:tracking_nonlinear}.
We observe that the robot tracks the reference trajectory relatively well in smooth segments but exhibits noticeable deviation or oscillation at sharp turns. As we can see, a longer prediction horizon enhances tracking accuracy in smooth regions by reducing drift and allows the robot to recover more quickly after deviating at sharp corners.

\end{document}